\providecommand\@dotsep{5}
\def\listtodoname{List of Todos}
\def\listoftodos{\@starttoc{tdo}\listtodoname}
\newtheorem{theorem}{Theorem}[section]
\newtheorem{proposition}[theorem]{Proposition}
\newtheorem{corollary}[theorem]{Corollary}
\newtheorem{lemma}[theorem]{Lemma}
\newcommand{\mycomment}[1]{}
  \theoremstyle{definition}
\newtheorem{definition}[theorem]{Definition}
\newtheorem{remark}[theorem]{Remark}
\newtheorem{question}{Question}
\newcommand{\dbZ}{\mathbb{Z}}
\newcommand{\undZ}{\underline{\mathbb{Z}}}
\newcommand{\sub}{SUB}
\newcommand{\Z}{\mathbb{Z}}
\newcommand{\N}{\mathbb{N}}
\newcommand{\I}{I}
\newcommand{\calF}{{\mathcal F}}
\newcommand{\calG}{{\mathcal G}}
\newcommand{\calH}{{\mathcal H}}
\newcommand{\nbeq}{\begin{equation}}
\newcommand{\neeq}{\end{equation}}
\newcommand{\beq}{\begin{equation*}}
\newcommand{\eeq}{\end{equation*}}
\DeclareMathOperator{\cd}{cd}
\DeclareMathOperator{\vcd}{vcd}
\DeclareMathOperator{\rank}{rank}
\DeclareMathOperator{\gd}{gd}
\DeclareMathOperator{\Out}{Out}
\DeclareMathOperator{\Inn}{Inn}
\DeclareMathOperator{\Aut}{Aut}
\DeclareMathOperator{\cat}{CAT}
\DeclareMathOperator{\iso}{Iso}
\DeclareMathOperator{\Ima}{Im}
\DeclareMathOperator{\mor}{mor}
\begin{document}

\title[]{Classifying spaces for families of abelian subgroups of braid groups, RAAGs and graphs of abelian groups.
\mycomment{On the dimension of
classifying spaces for families of abelian subgroups of braid groups, RAAGs and graphs of abelian groups.}
\mycomment{Calculating the dimension of
classifying spaces for families of abelian subgroups of braid groups, RAAGs, and graphs of groups of finitely generated  abelian groups}}

\author[Porfirio L. León Álvarez ]{Porfirio L. León Álvarez}

%    Address of record for the research reported here
\address{Instituto de Matemáticas, Universidad Nacional Autónoma de México. Oaxaca de Juárez, Oaxaca, México 68000}
\email{porfirio.leon@im.unam.mx}

%    General info
%\subjclass[2010]{Primary }

\date{}

%\dedicatory{This paper is dedicated to our advisors.}

\keywords{Braid groups, right-angled Artin groups, $\cat(0)$ groups, Bass-Serre theory, classifying spaces, families of subgroups,  virtually abelian dimension}

\begin{abstract}
Given a group $G$ and an integer $n\geq 0$ we consider the
family $\calF_n$ of all virtually abelian subgroups of $G$ of $\rank$ at most $n$. In this  article we prove that for each $n\ge2$ the Bredon cohomology, with respect to the family $\calF_n$,  of a free abelian group with  $\rank$  $k > n$ is nontrivial in dimension $k+n$; this answers a question of Corob Cook, Moreno, Nucinkis and Pasini \cite[Question 2.7]{victor:nucinkis:corob}. As an application,   we compute the minimal dimension of a classifying space for the family $\calF_n$ for  braid groups, right-angled Artin groups, and  graphs of groups whose vertex groups are infinite finitely generated virtually abelian groups, for all $n\ge2$. The main tools that we use are the Mayer-Vietoris sequence for Bredon cohomology,  Bass-Serre theory,  and the Lück-Weiermann construction.
\end{abstract}
\maketitle

\section{Introduction}
\noindent Given a group $G$, we say that a collection $\calF$ of subgroups of $G$ is a \emph{family} if it is non-empty and closed under conjugation and taking subgroups. We fix a group $G$ and a family $\calF$ of subgroups of $G$. We say that a $G$-CW-complex $X$ is a \emph{model for the classifying space} $E_{\calF}G$  if  all of its isotropy groups belong to $\calF$  and if $Y$ is a $G$-CW-complex with isotropy  groups belonging to $\calF$, there is precisely one $G$-map $Y\to X$ up to $G$-homotopy.  It can be shown that a model for the classifying space   $E_{\calF}G$ always exists and it is unique up to $G$-homotopy equivalence. We define the \emph{$\calF$-geometric dimension} of $G$   as$$\gd_{\calF}(G)=\min\{n\in \mathbb{N}| \text{ there is a model for } E_{\calF}G \text{ of dimension } n \}.$$

The $\calF$-geometric dimension has its algebraic counterpart, the \emph{$\calF$-cohomological dimension} $\cd_{\calF}(G)$, which can be defined in terms of Bredon cohomology (see Section 2). The $\calF$-geometric dimension and the $\calF$-cohomological dimension satisfy the following inequality (see \cite[Theorem 0.1]{LM00}):
\[\cd_{\calF}(G)\leq \gd_{\calF}(G)\leq \max\{\cd_{\calF}(G), 3\}.\]
It follows that if $\cd_{\calF}(G)\geq 3$ then $\cd_{\calF}(G)=\gd_{\calF}(G)$. It is not generally true that $\cd_{\calF}(G)=\gd_{\calF}(G)$. For example, for the family of finite subgroups $\calF_0$, in \cite{NB} it was proved that there is a right-angled Coxeter group $W$ such that $\cd_{\calF_0}(W)=2$ and $\gd_{\calF_0}(W)=3$. For other examples see \cite{SS19}.

 Let $n\geq0$ be an integer. A group is said to be \emph{virtually $\mathbb{Z}^n$} if it contains a subgroup of finite index isomorphic to $\mathbb{Z}^n$. Define the family \[\calF_n=\{H\leq G| H \text{ is virtually } \mathbb{Z}^r \text{ for some 
} 0\le r \le n \}.\] The  families  $\calF_0$ and $\calF_1$  are relevant due to their connection with the Farrell-Jones and Baum-Connes isomorphism conjectures; see for example \cite{LR05}. The Farrell-Jones conjecture has been proved for braid groups in \cite{MR1797585,MR1759505,MR3598161} and for some even Artin groups in \cite{MR4360030}.

For $n\geq 2$, the families $\calF_n$ have been recently studied by
several people;  see for example \cite{tomasz,HP20,MR4472883,SS19,Rita:Porfirio:Luis}. For a virtually $\Z^n$ group $G$ it was proved in \cite{tomasz} that $\gd_{\calF_k}(G)\leq n+k$ for all $0\leq k<n$. For a free abelian group this upper bound was also obtained by Corob Cook, Moreno, Nucinkis and Pasini in \cite{victor:nucinkis:corob}  and they asked whether this upper bound was sharp:
\begin{question} \cite[Question 2.7]{victor:nucinkis:corob}
For $0\leq k< n$, is $\gd_{\calF_k}(\Z^n)=n+k$?
\end{question}
We answer this question affirmatively in \Cref{Virtually:dimension:Z}. For $k=1$, this was proved in \cite[Theorem 5.13]{LW12} and for $k=2$ in \cite[Proposition A.]{Onorio}. 
As an application, we provide lower bounds for the $\calF_k$-geometric dimension  of virtually abelian groups, braid groups, and right-angled Artin groups (RAAGs). Combining these lower bounds with previously known results in the literature, we show that they are sharp. We also prove that the $\calF_k$-geometric dimension is equal to the $\calF_k$-cohomological  dimension in all these cases. On the other hand, inspired by 
 \cite{MR4472883}, we use Bass-Serre theory to explicitly calculate, for all $k\geq 1$, the $\calF_k$-geometric dimension of graphs of groups whose vertex groups are infinite finitely generated virtually abelian groups.

There are few explicit calculations of the $\calF_n$-geometric dimension for $n\geq 2$. For example, the $\calF_n$-geometric dimension for orientable $3$-manifold groups was explicitly calculated in \cite{MR4472883} for all $n\geq 2$. In \cite[Proposition A.]{Onorio}, it was shown that $\gd_{\calF_2}(\Z^k)=k+2$ for all $k\geq 3$. With our results we  add  braid groups, RAAGs, and  graphs of groups whose vertex groups are infinite finitely generated virtually abelian groups to this list. In what follows, we present more precisely these results.

\subsection*{The $\calF_n$-dimension of virtually abelian groups}

Let $G$ be a virtually $\Z^n$ group.
In \cite[Proposition 1.3]{tomasz},  it was proved that $\gd_{\calF_k}(G)\leq n+k$ for $0\leq k< n$. For a free abelian group this upper bound has also been proved in \cite{victor:nucinkis:corob}. In this article, we prove that this upper bound is sharp.

\begin{theorem}\label{Virtually:dimension:Z}
Let $k,n\in \mathbb{N}$ such that $0\leq k< n$. Let $G$ be a virtually $\Z^n$ group. Then  $\gd_{\calF_k}(G)=\cd_{\calF_k}(G)=n+k$.
\end{theorem}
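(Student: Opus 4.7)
The upper bound $\gd_{\calF_k}(G) \leq n+k$ is given by \cite[Proposition 1.3]{tomasz}, so the entire task is to prove the matching lower bound $\cd_{\calF_k}(G) \geq n+k$. Once that is done, the chain of inequalities
\[ n+k \;\leq\; \cd_{\calF_k}(G) \;\leq\; \gd_{\calF_k}(G) \;\leq\; n+k \]
forces equality; the sporadic low-dimensional cases in which $n+k < 3$ can be verified directly from standard models for $\mathbb{Z}$ and its virtually cyclic overgroups.

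\textbf{Reduction to the free abelian case.} Let $A \leq G$ be a finite-index subgroup with $A \cong \mathbb{Z}^n$. Every virtually $\mathbb{Z}^r$ subgroup of $\mathbb{Z}^n$ is torsion-free abelian of rank $r$, hence already isomorphic to $\mathbb{Z}^r$; consequently $\calF_k(G) \cap A = \calF_k(A)$. Restricting the action on a model for $E_{\calF_k(G)}G$ from $G$ to $A$ produces a model for $E_{\calF_k(A)}A$, and the corresponding restriction of coefficient systems on the orbit categories yields $\cd_{\calF_k}(\mathbb{Z}^n) \leq \cd_{\calF_k}(G)$. Thus it suffices to establish $\cd_{\calF_k}(\mathbb{Z}^n) \geq n+k$ for all $0 \leq k < n$.

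\textbf{The non-vanishing statement.} I would produce an explicit nonzero Bredon cohomology class $H^{n+k}_{\calF_k}(\mathbb{Z}^n;M) \neq 0$ for a well-chosen $\calO_{\calF_k}\mathbb{Z}^n$-module $M$, by induction on $k$. The base case $k=0$ is the classical $\cd_{\calF_0}(\mathbb{Z}^n)=n$. For the inductive step I would apply the Lück--Weiermann construction to pass from $\calF_{k-1}$ to $\calF_k$: since $\mathbb{Z}^n$ is abelian, two subgroups $H,K \in \calF_k \setminus \calF_{k-1}$ are Lück--Weiermann equivalent exactly when $\langle H,K\rangle$ has rank $k$, i.e.\ when they share the same primitive rank-$k$ saturation in $\mathbb{Z}^n$. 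Equivalence classes are therefore indexed by primitive rank-$k$ subgroups $H \cong \mathbb{Z}^k$, each with normalizer the whole of $\mathbb{Z}^n$ and quotient $\mathbb{Z}^n/H \cong \mathbb{Z}^{n-k}$. The resulting pushout of classifying spaces yields a Mayer--Vietoris long exact sequence relating $H^{*}_{\calF_k}(\mathbb{Z}^n)$, $H^{*}_{\calF_{k-1}}(\mathbb{Z}^n)$, and the contributions indexed by the primitive rank-$k$ subgroups. Each local contribution at a primitive $H$ is controlled, after accounting for the $H$-isotropy of dimension $k$, by the $\calF_0$-cohomology of $\mathbb{Z}^{n-k}$ sitting in top degree $n-k$; a careful bookkeeping should place a class in total degree $n+k$. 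Choosing $M$ concentrated on a single primitive rank-$k$ class isolates one summand and reduces the problem to showing the relevant connecting homomorphism does not annihilate this class.

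\textbf{Main obstacle.} The delicate point is exactly the inductive step: ruling out unwanted cancellations in the Mayer--Vietoris connecting maps and producing a coefficient system that both survives to degree $n+k$ and is amenable to the induction. This is precisely the content of the open question of \cite{victor:nucinkis:corob}, and the paper's contribution. Once the non-vanishing is in hand, combining with the reduction to $\mathbb{Z}^n$ gives $\cd_{\calF_k}(G) \geq n+k$, and then the upper bound of \cite{tomasz} together with the Lück--Meintrup inequality promotes this to $\cd_{\calF_k}(G) = \gd_{\calF_k}(G) = n+k$.
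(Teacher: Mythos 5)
There is a genuine gap, and you have in fact located it yourself: the entire content of the theorem beyond what was already in the literature is the non-vanishing $\cd_{\calF_k}(\Z^n)\geq n+k$, and your proposal stops exactly where that argument has to begin. Your reduction to the free abelian case and your description of the commensurability classes (indexed by primitive rank-$k$ subgroups, each with commensurator all of $\Z^n$) agree with the paper, but "a careful bookkeeping should place a class in total degree $n+k$" and "reduces the problem to showing the relevant connecting homomorphism does not annihilate this class" is a restatement of the question, not a proof. Moreover, a single induction on $k$ for the group $\Z^n$ alone does not close: in the Mayer--Vietoris sequence for the Lück--Weiermann pushout, the term whose non-vanishing you need in degree $n+k-1$ is not $H^{n+k-1}_{\calF_{k-1}}(\Z^n;-)$ but a relative term attached to a rank-$k$ subgroup $S$, so the inductive hypothesis you would want is not an instance of the statement you are inducting on.

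The paper resolves this with three ingredients your sketch does not supply. First, it works with the constant coefficient module $\undZ$ throughout (so all Bredon cohomology groups become ordinary cohomology of quotient spaces), rather than hunting for a bespoke module $M$ concentrated on one class. Second, and crucially, it strengthens the statement to a \emph{double} induction on pairs $(t,k)$: for every subgroup $H\leq\Z^n$ maximal of rank $t>k$ it proves $H^{n+k}_{\calF_k\cap H}(\Z^n;\undZ)\neq 0$ (\cref{cd:lower:bound}). With this formulation, the cokernel of the map $f_S^{*}$ appearing in the Mayer--Vietoris sequence for the pushout \cref{push-out:1} is identified, via a second Mayer--Vietoris sequence for the union of families $\sub(S)\cup(\calF_{k-1}\cap H)$ and the dimension bound $\gd_{\sub(S)}(\Z^n)\leq n-k$ (\cref{lem:aux:gd:sub(L)}), with $H^{n+k-1}(E_{\calF_{k-1}\cap S}\Z^n/\Z^n)$, which is nonzero by the inductive hypothesis applied to the pair $(k,k-1)$ with the rank-$k$ subgroup $S$ now playing the role of $H$; non-surjectivity of $\varphi=(\prod f_S^{*})-\Delta$ then follows by the diagonal argument. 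Third, to make the quotient-space Mayer--Vietoris sequences and dimension counts legitimate, the paper needs the technical \cref{inclusion}, which constructs compatible models $E_{\calF_k\cap S}\Z^n\hookrightarrow E_{\calF_k\cap H}\Z^n$ of dimension $\leq n+k$ so that the homotopy pushouts can be replaced by honest pushouts. None of these steps is routine, and without them the inductive step you describe cannot be carried out.
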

For $k=1$, the \cref{Virtually:dimension:Z} was proved in \cite[Theorem 5.13]{LW12}. For $k=2$, a particular case was proved in \cite[Proposition A.]{Onorio}, specifically $\gd_{\calF_2}(\Z^k)=k+2$ for all $k\geq 3$. 
As a corollary of \cref{Virtually:dimension:Z} we have 
\begin{corollary}\label{subgroup:Z:lower:bound:1}
Let $n\geq 1$ and let $G$ be a group that has a virtually $\Z^n$ subgroup. Then for $0\leq k<n$ we have $\gd_{\calF_k}(G)\geq n+k$ and $\cd_{\calF_k}(G)\geq n+k$.  
\end{corollary}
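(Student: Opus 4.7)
The plan is to deduce this corollary immediately from \Cref{Virtually:dimension:Z} together with the standard monotonicity of $\calF$-geometric and $\calF$-cohomological dimension under passage to subgroups. Let $H\leq G$ be a virtually $\Z^n$ subgroup of $G$.

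First, I would recall the general restriction principle: given a family $\calF$ of subgroups of $G$, write $\calF\cap H := \{K\in\calF \mid K\leq H\}$ for the induced family of $H$. Any model for $E_{\calF}G$, regarded as an $H$-CW-complex by restriction of the action, is a model for $E_{\calF\cap H}H$, since the $H$-isotropy groups are intersections of $G$-isotropy groups with $H$ and therefore lie in $\calF\cap H$. This yields the standard inequalities
\[
\gd_{\calF\cap H}(H)\leq \gd_{\calF}(G)\quad\text{and}\quad \cd_{\calF\cap H}(H)\leq \cd_{\calF}(G),
\]
the second by the analogous restriction statement for Bredon cohomology.

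The key observation is that the family $\calF_k$ is defined by an abstract group-theoretic condition (being virtually $\Z^r$ for some $0\leq r\leq k$), and this condition is inherited by subgroups. Hence $\calF_k\cap H$ coincides with the family $\calF_k$ of the group $H$ in its own right. Applying the monotonicity above with $\calF=\calF_k$ gives
\[
\gd_{\calF_k}(H)\leq \gd_{\calF_k}(G)\quad\text{and}\quad \cd_{\calF_k}(H)\leq \cd_{\calF_k}(G).
\]

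Finally, since $H$ is virtually $\Z^n$ and $0\leq k<n$, \Cref{Virtually:dimension:Z} gives $\gd_{\calF_k}(H)=\cd_{\calF_k}(H)=n+k$. Combining these equalities with the inequalities above yields $\gd_{\calF_k}(G)\geq n+k$ and $\cd_{\calF_k}(G)\geq n+k$, as required. There is no real obstacle here; the only thing to check carefully is the identification $\calF_k\cap H=\calF_k(H)$, which is immediate from the definition, and the restriction principle, which is well known in the Bredon-cohomology literature.
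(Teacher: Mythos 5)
Your proof is correct and follows essentially the same route as the paper: the paper also deduces the corollary from \Cref{Virtually:dimension:Z} via monotonicity of the $\calF$-dimensions under restriction to the virtually $\Z^n$ subgroup $H$, using that $\calF_k\cap H$ is the intrinsic family $\calF_k$ of $H$. Nothing is missing.
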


\subsection*{The $\calF_n$-dimension of braid groups}
There are various ways to define the (full) braid group $B_n$ on $n$ strands. For our purposes, the following definition is convenient. Let $D_n$ be the closed disc with $n$ punctures. We define the \textit{braid group $B_n$ on $n$ strands} as the isotopy classes of orientation preserving diffeomorphisms of $D_n$ that restrict to the identity on the boundary $\partial D_n$. In the literature, this group is known as the mapping class group of $D_n$. It is well known that  $\gd_{\calF_0}(B_n)=n-1$ see for example \cite[Section 3]{Arnold2014}. In \cite[Theorem 1.4]{Rita:Porfirio:Luis}, it was proved that $\gd_{\calF_k}(B_n)\leq n+k-1$ for all $k\in \N$. Using \cref{subgroup:Z:lower:bound:1} and \cite[Proposition 3.7]{Ramon:Juan}  we prove that this upper bound is sharp.
\begin{theorem}\label{gd:braid:groups}
    Let  $k,n \in \N$ such that $0\leq k< n-1$ and $G$ be either the full braid group  $B_n$ or the pure braid group $P_n$. Then  $\gd_{\calF_k}(G)=\cd_{\calF_k}(G)=\vcd(G)+k= n+k-1.$
\end{theorem}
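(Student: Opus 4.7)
The strategy is to sandwich $\gd_{\calF_k}(G)$ between matching upper and lower bounds that are already accessible from results earlier in the paper together with one auxiliary citation.

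For the upper bound, I would invoke \cite[Theorem 1.4]{Rita:Porfirio:Luis}, which supplies $\gd_{\calF_k}(B_n)\le n+k-1$ for every $k\in\N$. To pass to the pure braid group, restrict any $B_n$-CW model for $E_{\calF_k}B_n$ to $P_n$: the resulting $P_n$-CW complex has stabilizers of the form $P_n\cap H$ with $H$ virtually $\Z^r$ for some $r\le k$, and such intersections still belong to $\calF_k(P_n)$. Hence $\gd_{\calF_k}(P_n)\le n+k-1$, and the analogous Bredon-cohomological restriction argument gives $\cd_{\calF_k}(P_n)\le \cd_{\calF_k}(B_n)\le n+k-1$.

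For the lower bound I would combine \cite[Proposition 3.7]{Ramon:Juan}, which exhibits a $\Z^{n-1}$ subgroup inside $P_n$ (and hence inside $B_n$), with \cref{subgroup:Z:lower:bound:1}. Under the hypothesis $0\le k<n-1$, that corollary, applied with the ambient rank $n-1$, yields
\[
\gd_{\calF_k}(G)\ \ge\ (n-1)+k\ =\ n+k-1\quad\text{and}\quad \cd_{\calF_k}(G)\ \ge\ n+k-1,
\]
for $G\in\{B_n,P_n\}$. Pairing this with the upper bound forces the chain of equalities $\cd_{\calF_k}(G)=\gd_{\calF_k}(G)=n+k-1$, and the identification with $\vcd(G)+k$ follows from the classical fact $\vcd(B_n)=\cd(P_n)=n-1$, furnished for example by Hatcher's $(n-1)$-dimensional model in \cite{Ha86}.

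The only genuine obstacle is confirming that the ambient rank we feed into \cref{subgroup:Z:lower:bound:1} is indeed $n-1$, i.e., verifying that $P_n$ contains a free abelian subgroup of rank $n-1$; this is exactly the content of \cite[Proposition 3.7]{Ramon:Juan} (and can also be seen concretely via Dehn twists along $n-1$ disjoint nested curves in $D_n$). Everything else is assembly: the substantive ingredient is \cref{Virtually:dimension:Z}, which powers \cref{subgroup:Z:lower:bound:1}, and the deduction in the braid-group case is then a short comparison of bounds.
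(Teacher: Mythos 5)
Your argument is correct and is essentially the paper's own proof: the upper bound comes from \cite[Theorem 1.4]{Rita:Porfirio:Luis} together with $\vcd(B_n)=n-1$ from \cite{Ha86}, and the lower bound comes from feeding the $\Z^{n-1}$ subgroup of $P_n$ supplied by \cite[Proposition 3.7]{Ramon:Juan} into \cref{subgroup:Z:lower:bound:1}, with \cref{geometric:cohomological:dimension} closing the chain of inequalities. The only cosmetic difference is that you transfer the upper bound to $P_n$ by restricting the $B_n$-model, whereas the paper appeals to $P_n$ having finite index in $B_n$; both are valid.
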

\subsection*{The $\calF_n$-dimension of right-angled Artin groups} Let $\Gamma$ be a finite simple graph, i.e. a finite graph without loops or multiple edges between vertices. We define the \textit{right-angled Artin group} (RAAG) $A_{\Gamma}$ as the group generated by the vertices of $\Gamma$ with all the relations of the form $vw=wv$ whenever $v$ and $w$ are joined by an edge.

Let  \(A_\Gamma\) be a RAAG. It is well-known $A_\Gamma$ is a $\cat(0)$ group, in fact $A_\Gamma$ acts on the universal cover  \(\Tilde{S}_\Gamma\) of its  Salvetti CW-complex \(S_\Gamma\), see Section 4.2. In \cite{tomasz} it was proved that  \(\cd_{\calF_k}(A_\Gamma) \leq \dim(S_{\Gamma})+k+1\). Following the proof of \cite[Proof of Theorem 3.1]{tomasz} and using \cite[Proposition 7.3]{HP20}, we can actually show that \(\cd_{\calF_k}(A_\Gamma) \leq \dim(S_{\Gamma})+k\) in \cref{upper:bound:cd:RAAG}. Moreover, by using \cref{subgroup:Z:lower:bound:1} and \cref{subgroup:abelian:rank:vcd:salvetti}, we can prove that this upper bound is sharp.

 \begin{theorem}\label{Virtually:abelian:RAAG}
Let $A_\Gamma$ be a right-angled Artin group.  Then for $0 \leq k < \cd(A_\Gamma)$ we have
$ \gd_{\calF_k}(A_\Gamma)= \cd_{\calF_k}(A_\Gamma)=\dim(S_{\Gamma})+k=\cd(A_\Gamma)+k$.
\end{theorem}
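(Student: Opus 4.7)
The plan is to sandwich both $\gd_{\calF_k}(A_\Gamma)$ and $\cd_{\calF_k}(A_\Gamma)$ between matching upper and lower bounds equal to $\dim(S_\Gamma)+k$. For the upper bound I will simply invoke \cref{upper:bound:cd:RAAG}, which gives $\cd_{\calF_k}(A_\Gamma)\le \dim(S_\Gamma)+k$ by refining the construction of \cite[Proof of Theorem 3.1]{tomasz} with the cohomological input of \cite[Proposition 7.3]{HP20}. No new work is required on the upper-bound side.

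For the matching lower bound I first recall that the Salvetti complex $S_\Gamma$ is a finite $K(A_\Gamma,1)$ whose dimension equals the clique number of $\Gamma$, so $\cd(A_\Gamma)=\dim(S_\Gamma)$. A clique of size $m$ in $\Gamma$ generates a $\Z^m$ subgroup of $A_\Gamma$, and this observation is the content of \cref{subgroup:abelian:rank:vcd:salvetti}; in particular $A_\Gamma$ contains a $\Z^{\dim(S_\Gamma)}$ subgroup. Writing $n=\dim(S_\Gamma)=\cd(A_\Gamma)$, the hypothesis $k<\cd(A_\Gamma)$ reads $k<n$, so \cref{subgroup:Z:lower:bound:1} applied to $A_\Gamma$ and this subgroup yields both $\gd_{\calF_k}(A_\Gamma)\ge n+k$ and $\cd_{\calF_k}(A_\Gamma)\ge n+k$.

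Combining the two bounds gives $\cd_{\calF_k}(A_\Gamma)=\dim(S_\Gamma)+k$ immediately. To upgrade this to the geometric statement I use the general inequality $\gd_{\calF_k}(A_\Gamma)\le \max\{\cd_{\calF_k}(A_\Gamma),3\}$ from \cite[Theorem 0.1]{LM00}. The constraint $k<\dim(S_\Gamma)$ forces $\dim(S_\Gamma)+k\ge 2k+1$, so the only regime in which $\cd_{\calF_k}(A_\Gamma)<3$ would be $k=0$ with $\dim(S_\Gamma)\le 2$; in that corner the result reduces to the elementary fact that $A_\Gamma$ is torsion-free with classifying space $S_\Gamma$, giving $\gd_{\calF_0}(A_\Gamma)=\cd(A_\Gamma)=\dim(S_\Gamma)$. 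In every other case $\cd_{\calF_k}(A_\Gamma)\ge 3$ and the geometric and cohomological dimensions automatically coincide.

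I do not anticipate a serious obstacle: the argument is a clean assembly of the upper bound \cref{upper:bound:cd:RAAG} with the subgroup lower bound \cref{subgroup:Z:lower:bound:1}, mediated by the clique--Salvetti correspondence \cref{subgroup:abelian:rank:vcd:salvetti}. The one point that requires a moment's care is the low-dimensional regime where the $\max\{\cd_{\calF_k},3\}$ clause of the Lück--Meintrup inequality is not automatic; this is handled uniformly by the torsion-free observation above.
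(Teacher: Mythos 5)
Your proposal is correct and follows essentially the same route as the paper: the upper bound via \cref{upper:bound:cd:RAAG}, the lower bound via the $\Z^{\dim(S_\Gamma)}$ subgroup of \cref{subgroup:abelian:rank:vcd:salvetti} fed into \cref{subgroup:Z:lower:bound:1}, and the upgrade from $\cd_{\calF_k}$ to $\gd_{\calF_k}$ via \cite[Theorem 0.1]{LM00}, with the only non-automatic case ($k=0$, low dimension) handled exactly as in \cref{subgroup:RAAG:vcd} using that the universal cover of the Salvetti complex is a model for $EA_\Gamma$. No issues.
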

This calculation of the $\calF_k$-geometric dimension of a RAAG $A_\Gamma$ is explicit because the dimension of the Salvetti CW-complex $S_{\Gamma}$ is the maximum of all natural numbers $n$ such that there is a complete subgraph $\Gamma'$ of $\Gamma$ with $|V(\Gamma')| = n$  (see \cref{subgroup:RAAG:vcd}).

Using \cref{subgroup:Z:lower:bound:1} we can give a lower bound for the $\calF_k$-geometric dimension of the outer automorphism group $\Out(A_\Gamma)$ of some RAAGs $A_\Gamma$.
\begin{proposition}\label{gd:out:fn}
    Let $n\geq 2$. Let $F_n$ be the free group in $n$ generators. Then for all $0\leq k< 2n-3$ we have $$\gd_{\calF_k}(\Out(F_n))\geq \vcd(\Out(F_n))+k\geq 2n+k-3.$$ 
\end{proposition}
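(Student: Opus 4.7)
The plan is to combine \cref{subgroup:Z:lower:bound:1} with two classical facts about $\Out(F_n)$. First, the theorem of Culler and Vogtmann asserts that $\vcd(\Out(F_n)) = 2n-3$ for every $n \geq 2$; this at once yields the right-hand inequality $\vcd(\Out(F_n)) + k \geq 2n + k - 3$ (in fact as an equality).

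The second ingredient is a subgroup of $\Out(F_n)$ isomorphic to $\Z^{2n-3}$. This is a known feature of $\Out(F_n)$: the maximum rank of a free abelian subgroup of $\Out(F_n)$ equals $2n-3$, and this rank is attained, as established by Feighn--Handel in their work on abelian subgroups of $\Out(F_n)$. Concrete models may also be produced as subgroups of Nielsen-type twist automorphisms associated to a maximal graph-of-groups decomposition of $F_n$, or via embeddings into $\Out(F_n)$ of mapping class groups of surfaces with free fundamental group of the appropriate rank.

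Granting these two ingredients, the conclusion follows directly. Since $\Out(F_n)$ contains a (virtually) $\Z^{2n-3}$ subgroup, \cref{subgroup:Z:lower:bound:1}, applied with the integer $n$ in its statement taken to be $2n-3$, gives, for every $0 \leq k < 2n-3$,
\[
\gd_{\calF_k}(\Out(F_n)) \;\geq\; (2n-3) + k \;=\; \vcd(\Out(F_n)) + k.
\]
Composing this with the Culler--Vogtmann equality produces the full chain of inequalities in the statement, and the range $0 \leq k < 2n-3$ matches exactly the hypothesis $k < n$ in \cref{subgroup:Z:lower:bound:1} once we interpret the $n$ there as $2n-3$.

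The only real obstacle is the second ingredient: exhibiting, or citing in a form usable here, a concrete $\Z^{2n-3}$ subgroup of $\Out(F_n)$. Once that reference is in place, everything reduces to a one-line invocation of \cref{subgroup:Z:lower:bound:1} together with the $\vcd$ computation, which is why the proposition is stated as a proposition rather than a theorem.
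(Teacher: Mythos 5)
Your proposal is correct and follows essentially the same route as the paper: cite the Culler--Vogtmann computation $\vcd(\Out(F_n))=2n-3$ together with the existence of a free abelian subgroup of that rank, and then apply \cref{subgroup:Z:lower:bound:1} with the rank $2n-3$ playing the role of $n$ there. The only cosmetic difference is the reference for the $\Z^{2n-3}$ subgroup: the paper attributes both facts to Culler--Vogtmann, whose original paper already exhibits such a subgroup, so no appeal to Feighn--Handel is needed.
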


\begin{proposition}\label{gd:diamonds}
  Let $A_d$ be the right-angled Artin group given by a string of $d$ diamonds. Then $\gd_{\calF_k}(\Out(A_d))\geq\vcd(\Out(A_d))+k\geq 4d +k-1$ for all $0\leq k< 4d-1$. 
\end{proposition}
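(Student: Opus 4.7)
The plan is to apply \cref{subgroup:Z:lower:bound:1} to $\Out(A_d)$, in direct parallel with the proof sketch of \cref{gd:out:fn} for $\Out(F_n)$. The two ingredients required are an explicit free abelian subgroup of rank $4d-1$ inside $\Out(A_d)$, together with the equality $\vcd(\Out(A_d))=4d-1$ from the literature.

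First, I would recall that the defining graph $\Gamma_d$ of $A_d$ is obtained by gluing $d$ copies of the $4$-cycle $C_4$ along consecutive vertices in a chain, and that $\Aut(A_{\Gamma_d})$ is generated by the Laurence--Servatius generators: graph symmetries, inversions, transvections $v\mapsto vw$ whenever $\mathrm{lk}(w)\subseteq \mathrm{st}(v)$, and partial conjugations by a vertex $v$ of a connected component of $\Gamma_d\setminus \mathrm{st}(v)$. Each diamond contributes a local block of commuting transvections and partial conjugations, and the gluing along shared vertices produces additional commutation relations while preserving $\Z$-independence. One thereby isolates $4d-1$ such generators that pairwise commute and whose images in $\Out(A_d)$ are $\Z$-linearly independent; independence modulo inner automorphisms can be verified by the projection homomorphisms of Day--Wade that detect nontriviality in the abelianization of the resulting subgroup of $\Out(A_d)$.

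Second, the matching upper bound $\vcd(\Out(A_d))\leq 4d-1$ is supplied by the existing literature on spines of outer spaces for RAAGs (e.g.\ Charney--Stambaugh--Vogtmann and subsequent work), which furnishes a contractible $\Out(A_d)$-CW-complex of dimension $4d-1$ with virtually torsion-free stabilisers. Combined with the lower bound coming from the $\Z^{4d-1}$ subgroup constructed above, this yields the equality $\vcd(\Out(A_d))=4d-1$.

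Finally, applying \cref{subgroup:Z:lower:bound:1} to the $\Z^{4d-1}$ subgroup gives $\gd_{\calF_k}(\Out(A_d))\geq (4d-1)+k$ for every $0\leq k<4d-1$, which is exactly $\vcd(\Out(A_d))+k\geq 4d+k-1$. The main obstacle will be the construction and the independence verification of the $4d-1$ outer automorphisms: while each diamond on its own contributes a transparent block of commuting transvections and partial conjugations, handling the shared vertices between consecutive diamonds requires careful bookkeeping to ensure that the resulting $4d-1$ elements remain linearly independent modulo $\Inn(A_d)$.
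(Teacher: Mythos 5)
Your proposal is correct and follows essentially the same route as the paper: both reduce the statement to the existence of a $\Z^{4d-1}$ subgroup of $\Out(A_d)$ together with $\vcd(\Out(A_d))=4d-1$, and then apply \cref{subgroup:Z:lower:bound:1}. The only difference is that the paper simply cites Day--Wade (their Proposition 6.5) for both of these facts, whereas you sketch an explicit construction of the abelian subgroup via Laurence--Servatius generators; that extra work is not needed, and citing the reference avoids the bookkeeping at the shared vertices that you correctly identify as the delicate point.
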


\begin{question}
Given \cref{gd:braid:groups} and \cref{virtually:abelian:RAAG}, it is natural to ask whether it is true that in \cref{gd:out:fn} we can have $\gd_{\calF_k}(\Out(F_n)) \leq \vcd(\Out(F_n)) + k \leq2n + k - 3$ for all $0 \leq k < 2n - 3$. Similarly, if it is true that in \cref{gd:diamonds} we can have $\gd_{\calF_k}(\Out(A_d)) \leq \vcd(\Out(A_d)) + k \leq 4d + k - 1$ for all $0 \leq k < 4d - 1$.
\end{question}
\subsection*{The $\calF_n$-geometric dimension for graphs of groups of finitely generated virtually abelian groups.}

 Inspired by 
 \cite{MR4472883}, we use   Bass-Serre theory, \cref{Virtually:dimension:Z} and \cref{subgroup:Z:lower:bound:1} to compute the $\calF_n$-geometric dimension of graphs of groups whose vertex groups are finitely generated virtually abelian groups. 
\begin{theorem}\label{graph:groups:1}
    Let $Y$ be a finite graph of groups such that for each $v\in V(Y)$ the group $G_v$ is infinite finitely generated virtually abelian,  with $\rank(G_e)<\rank(G_v)$. Suppose that the splitting of  $G=\pi_1(Y)$ is acylindrical. Let  $m=\max\{\rank(G_v)| v\in V(Y) \}$. Then for $1\leq k<m$ we have $\gd_{\calF_k}(G)=m+k.$ 
\end{theorem}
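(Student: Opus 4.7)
The plan is to establish matching bounds $\gd_{\calF_{k}}(G)\ge m+k$ and $\gd_{\calF_{k}}(G)\le m+k$, then recover $\cd_{\calF_{k}}(G)=m+k$ via the Lück--Meintrup inequality \cite{LM00}; this last step is automatic since $k\ge 1$ and $m>k$ force $m+k\ge 3$.

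For the lower bound, choose a vertex $v^{\ast}\in V(Y)$ with $\rank(G_{v^{\ast}})=m$. Since $G_{v^{\ast}}\le G$ is a virtually $\dbZ^{m}$ subgroup, \cref{subgroup:Z:lower:bound:1} gives $\gd_{\calF_{k}}(G)\ge m+k$ and $\cd_{\calF_{k}}(G)\ge m+k$ at once.

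For the upper bound, let $T$ be the Bass--Serre tree of the splitting and construct a $G$-CW-complex $X$ by the standard graph-of-groups pushout: for each $G$-orbit of vertex attach $G\times_{G_{v}}E_{\calF_{k}}(G_{v})$, for each $G$-orbit of edge attach $G\times_{G_{e}}E_{\calF_{k}}(G_{e})\times[0,1]$, and glue along the obvious faces. Applying \cref{Virtually:dimension:Z} vertex- and edge-wise gives $\gd_{\calF_{k}}(G_{v})\le \rank(G_{v})+k\le m+k$ (with value $0$ when $\rank(G_{v})\le k$, since then $G_{v}\in\calF_{k}$), and the hypothesis $\rank(G_{e})<\rank(G_{v})$ forces $\rank(G_{e})\le m-1$, so $\gd_{\calF_{k}}(G_{e})+1\le m+k$. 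Hence $\dim X\le m+k$.

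It remains to verify that $X$ is genuinely a model for $E_{\calF_{k}}G$, which reduces to showing every $H\in\calF_{k}$ is elliptic in $T$ (up to a bounded, contractible modification). Suppose $H$ is infinite virtually $\dbZ^{r}$ and acts hyperbolically on $T$; then a finite-index subgroup of $H$ preserves a bi-infinite axis $\ell\subset T$, and the kernel of the induced map $H\to\Aut(\ell)$ pointwise fixes arbitrarily long sub-arcs of $\ell$, hence is finite by acylindricity. Thus $H$ must be virtually cyclic, i.e.\ $r=1$. For $k\ge 2$ this already shows every $H\in\calF_{k}$ is elliptic in $T$ and $X$ is the required model; for $k=1$ the remaining possibly hyperbolic virtually cyclic subgroups are handled by thickening $X$ along their axes, a one-dimensional adjustment that keeps the dimension at $m+1=m+k$. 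The main obstacle I anticipate is precisely this last verification in the $k=1$ case --- confirming that the thickening produces contractible $H$-fixed sets for every virtually cyclic $H$ without raising the dimension, which is exactly where the acylindricity hypothesis does the heavy lifting.
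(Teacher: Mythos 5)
Your lower bound is correct and coincides with the paper's: a maximal-rank vertex group is a virtually $\Z^{m}$ subgroup of $G$, so \cref{subgroup:Z:lower:bound:1} applies. The dimension count in your upper bound is also the right one. The gap is in the verification that your complex $X$ is a model for $E_{\calF_{k}}G$, and it occurs for \emph{every} $k\ge 1$, not only for $k=1$ as you claim. Your ellipticity argument shows that a subgroup of $G$ that is virtually $\Z^{r}$ with $r\ge 2$ cannot act hyperbolically on $T$ (its axis would have an infinite pointwise stabilizer of infinite index, contradicting acylindricity). But it does not show that infinite virtually cyclic subgroups are elliptic --- and they are not: in any nontrivial splitting (e.g.\ $\Z^{2}\ast\Z^{2}$) there are infinite cyclic subgroups translating along an axis of $T$, and these lie in $\calF_{k}$ for every $k\ge 1$ since $\calF_{1}\subseteq\calF_{k}$. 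So the statement ``for $k\ge 2$ this already shows every $H\in\calF_{k}$ is elliptic in $T$'' is false, and the tree-of-spaces $X$ built over $T$ fails to be a model for $E_{\calF_{k}}G$ for any $k$ in the stated range: a hyperbolic virtually cyclic $H$ has empty fixed-point set in $X$.

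The repair you defer to the end (``thickening along axes'') is therefore the essential step for all $k$, and it is precisely what the paper imports as \cref{building:model:apply:haefliger} (from \cite{MR4472883}): one cones off the geodesics of $T$ admitting cocompact actions of infinite virtually cyclic subgroups, obtaining a $2$-dimensional $G$-CW-complex $\widetilde{T}$ whose isotropy family contains $\calF_{n}$ for all $n$; acylindricity is what makes the cone-point stabilizers virtually cyclic and the fixed-point sets of the relevant subgroups contractible. Note this is a two-dimensional modification (the cone on a line is $2$-dimensional), not a one-dimensional one, which is why a ``$2$'' appears in the upper bound of \cref{bass:serre:gd}; it is harmless here only because $m+k\ge 3$. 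Once $\widetilde{T}$ is in hand, \cref{prop:haefliger} applied cell-by-cell, together with \cref{Virtually:dimension:Z} for the vertex and edge groups and the hypothesis $\rank(G_{e})<\rank(G_{v})$, gives $\gd_{\calF_{k}}(G)\le m+k$ exactly as in your dimension count. As written, your proposal asserts the conclusion while explicitly leaving this verification open, so it is incomplete at the step that carries the real content of the theorem.
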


\begin{corollary}\label{gd:product:free}
 Let $Y$ be a finite graph of groups such that for each $v\in V(Y)$ the group $G_v$ is 
 infinite finitely generated virtually abelian and for each $e\in E(Y)$ the group $G_e$ is a finite group.  Let  $m=\max\{\rank(G_v)| v\in V(Y) \}$. Then for $1\leq k<m$ we have $\gd_{\calF_k}(G)=m+k.$ 
\end{corollary}
\subsection*{Outline of the paper.} 
In Section 2, we introduce the Lück--Weiermann construction, which enables us to build models inductively for the classifying space of $E_{\calF_n\cap H}\Z^n$. Later in the same section, we define Bredon cohomology and present the Mayer-Vietoris sequence, which is as a crucial tool in proving \cref{cd:lower:bound}. In \cref{sec:3}, we  prove \cref{Virtually:dimension:Z}. In \cref{sec:4}, we  present some applications of \cref{subgroup:Z:lower:bound:1}, for instance, we explicitly calculate the  $\calF_k$-geometric dimension of  braid groups and RAAGs.  Furthermore, we  provide a lower bound for the $\calF_k$-geometric dimension of the outer automorphism group of certain RAAGs. Finally, we use Bass-Serre theory to prove \cref{graph:groups:1}. 

\subsection* {Acknowledgements}I was supported by a doctoral scholarship of the Mexican Council of Humanities, Science and Technology (CONAHCyT). I would like to thank Luis Jorge Sánchez Saldaña for several useful discussions during the preparation of this article. I also thank Rita Jiménez Rolland for comments on a draft of the present article. I am  grateful for the financial support of DGAPA-UNAM grant PAPIIT IA106923
and CONACyT grant CF 2019-217392. I thank the anonymous referee for corrections and comments that improved the exposition.

\section{Preliminaries}\label{sec:2}

\subsection*{The Lück-Weiermann construction}
In this subsection, we give a particular construction of Lück-Weiermann \cite[Theorem 2.3]{LW12} that we will use later.
\begin{definition}
 Let $\calF\subset\calG$ be two families of subgroups of $G$. Let $\sim$ be  an equivalence relation in $\calG-\calF$. We say that $\sim$ is strong  if the following is satisfied 
 
\begin{enumerate}[a)]
    \item If  $H, K \in \calG-\calF$ with $H\subseteq K$, then $H\sim K$;
    \item If $H, K \in \calG-\calF$ and $g\in G$, then $H\sim K$ if and only if $gHg^{-1} \sim gKg^{-1}$.
\end{enumerate}
 \end{definition}
 \begin{definition}
     Let $G$ be a group and $L, K$ be subgroups of $G$. We say that $L$ and $K$ are commensurable if $L\cap K$ has finite index in both $L$ and $K$.
 \end{definition}
\begin{definition}
Let $G$ be a group and let $H$ be a subgroup of $G$. We define the commensurator of $H$ in $G$ as  $$N_{G}[H]:=\{g\in G | gHg^{-1}\text{ is commensurable with }H \}.$$
\end{definition} 
\begin{definition}\label{restr:family}
Let $G$ be a group, let $H$ be a subgroup of $G$, and $\calF$ a family of subgroups of $G$. We define the family $\calF\cap H$ of $H$ as all the subgroups of $H$ that belong to $\calF$. We can complete the family $\calF\cap H$ in order to get a family $\overline{\calF\cap H}$  of  $G$. 
\end{definition}

 \begin{remark}\label{families}Following the notation of \cref{restr:family} note that:
 \begin{itemize}
     \item If $H=G$ then  $\overline{\calF\cap H}=\calF$.
     \item If $H$ is normal subgroup of $G$, then $\overline{\calF\cap H}=\calF\cap H$.
 \end{itemize}
 \end{remark}
 
      Let $G$ be a group, $H$ a subgroup of $G$ and $n\geq0$. Consider the following nested families of $G$,  $\overline{\calF_n\cap H}\subseteq \overline{\calF_{n+1}\cap H}$, let $\sim$ the equivalence relation in $\overline{\calF_{n+1}\cap H}-\overline{\calF_n\cap H}$ given by commensurability. It is easy to check that this is a strong equivalence relation.

We introduce the following notation:
\begin{itemize}
    \item We denote by $(\overline{\calF_{n+1}\cap H}-\overline{\calF_n\cap H})/\sim$ the equivalence classes  in  $\overline{\calF_{n+1}\cap H}-\overline{\calF_n\cap H}$. Given $L\in (\overline{\calF_{n+1}\cap H}-\overline{\calF_n\cap H})$ we denote by $[L]$ its equivalence class.
    
    \item Given $[L]\in (\overline{\calF_{n+1}\cap H}-\overline{\calF_n\cap H})/\sim$, we define the next family of subgroups of  $N_{G}[L]$ $$(\overline{\calF_{n+1}\cap H})[L]:=\{K\leq N_G[L]| K\in  (\overline{\calF_{n+1}\cap H}-\overline{\calF_n\cap H}), [K]=[L]\}\cup (\overline{\calF_n\cap H}\cap N_G[L]).$$
    
\end{itemize}

 \begin{theorem}\cite[Theorem 2.3]{LW12}\label{LW} Let $G$ be a group, let $H$ be a subgroup of $G$ and $n\geq0$. Consider the following nested families of $G$,  $\overline{\calF_n\cap H}\subseteq \overline{\calF_{n+1}\cap H}$, let $\sim$ be the equivalence relation given by commensurability in $\overline{\calF_{n+1}\cap H}-\overline{\calF_n\cap H}$. 
 Let $I$ be a complete set of representatives of conjugation classes in $(\overline{\calF_{n+1}\cap H}-\overline{\calF_n\cap H})/\sim$. Choose arbitrary $N_G[L]$-CW-models for $E_{(\overline{\calF_n\cap H})\cap N_{G}[L]}N_{G}[L]$ and $E_{ (\overline{\calF_{n+1}\cap H})[L]}N_{G}[L]$, and an arbitrary model for  $E_{\overline{\calF_n\cap H}}G$. Consider the  following $G$-push-out 
  \[
\begin{tikzpicture}
  \matrix (m) [matrix of math nodes,row sep=3em,column sep=4em,minimum width=2em]
  {
     \displaystyle\bigsqcup_{[L]\in I} G\times_{N_{G}[L]}E_{(\overline{\calF_n\cap H})\cap N_{G}[L]}N_{G}[L] & E_{\overline{\calF_n\cap H}}G \\
      \displaystyle\bigsqcup_{[L]\in I} G\times_{N_{G}[L]}E_{ (\overline{\calF_{n+1}\cap H})[L]}N_{G}[L] & X \\};
  \path[-stealth]
    (m-1-1) edge node [left] {$\displaystyle\bigsqcup_{[L]\in I}id_{G}\times_{N_G[L]}f_{[L]}$} (m-2-1) (m-1-1.east|-m-1-2) edge  node [above] {$i$} (m-1-2)
    (m-2-1.east|-m-2-2) edge node [below] {} (m-2-2)
    (m-1-2) edge node [right] {} (m-2-2);
\end{tikzpicture}
\]
such that $f_{[L]}$ is a cellular $G$-map for every $[L]\in I$ and either (1) $i$ is an inclusion of $G$-CW-complexes, or (2) such that every map $f_{[L]}$ is an inclusion of $G$-CW-complexes for every $[L]\in I$ and $i$ is a cellular $G$-map. Then $X$ is a model for $E_{\overline{\calF_{n+1}\cap H}}G$.

 \end{theorem}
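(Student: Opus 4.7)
The plan is to verify directly that $X$ is a model for $E_{\overline{\calF_{n+1}\cap H}}G$ by checking the two defining conditions: (i) every isotropy group of $X$ lies in $\overline{\calF_{n+1}\cap H}$, and (ii) the $K$-fixed point set $X^K$ is weakly contractible for every $K\in \overline{\calF_{n+1}\cap H}$. Condition (i) is immediate from the construction: the top-right corner $E_{\overline{\calF_n\cap H}}G$ has isotropy in $\overline{\calF_n\cap H}\subseteq \overline{\calF_{n+1}\cap H}$, and each bottom-left summand $G\times_{N_G[L]}E_{(\overline{\calF_{n+1}\cap H})[L]}N_G[L]$ has isotropy conjugate to subgroups in $(\overline{\calF_{n+1}\cap H})[L]\subseteq \overline{\calF_{n+1}\cap H}$. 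Since $G$-pushouts preserve isotropy, (i) follows.

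For (ii), because one of the two horizontal arrows is an inclusion of $G$-CW-complexes, the $K$-fixed point functor commutes with the pushout, and $X^K$ is the pushout of the induced diagram of $K$-fixed point sets. I would then use the standard identification
\[
(G\times_{N_G[L]}Y)^K \;=\; \bigsqcup_{\,gN_G[L]:\ g^{-1}Kg\subseteq N_G[L]}\ g\cdot Y^{g^{-1}Kg}
\]
and split into two cases. If $K\in \overline{\calF_n\cap H}$, then $(E_{\overline{\calF_n\cap H}}G)^K$ is contractible, and for each contributing coset $gN_G[L]$ the conjugate $g^{-1}Kg$ lies both in $(\overline{\calF_n\cap H})\cap N_G[L]$ and in $(\overline{\calF_{n+1}\cap H})[L]$, so the fixed-point sets of both classifying spaces on the left-hand column are contractible. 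The induced map $\bigsqcup f_{[L]}$ is therefore a weak equivalence between contractible spaces on $K$-fixed points, and a standard pushout-of-cofibrations argument yields that $X^K$ is contractible.

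If instead $K\in \overline{\calF_{n+1}\cap H}-\overline{\calF_n\cap H}$, then $(E_{\overline{\calF_n\cap H}}G)^K=\emptyset$ and likewise every $\bigl(E_{(\overline{\calF_n\cap H})\cap N_G[L]}N_G[L]\bigr)^{g^{-1}Kg}=\emptyset$, so the top row of the pushout vanishes on $K$-fixed points and
\[
X^K \;\simeq\; \bigsqcup_{[L]\in I}\ \bigsqcup_{\,gN_G[L]:\ g^{-1}Kg\in (\overline{\calF_{n+1}\cap H})[L]}\ \bigl(E_{(\overline{\calF_{n+1}\cap H})[L]}N_G[L]\bigr)^{g^{-1}Kg}.
\]
Each non-empty piece on the right is contractible, and the main obstacle is to show that exactly one of these pieces is non-empty. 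To handle this I would argue that a coset $gN_G[L]$ contributes precisely when $g^{-1}Kg$ is commensurable with $L$, equivalently when $K$ is commensurable with $gLg^{-1}$; hence the $\sim$-class of $K$ determines $[L]\in I$ uniquely (since $I$ is a set of conjugation-class representatives), and if two cosets $g_1N_G[L]$ and $g_2N_G[L]$ both contribute then $g_1Lg_1^{-1}$ and $g_2Lg_2^{-1}$ are both commensurable with $K$ and hence with one another, whence $g_2^{-1}g_1\in N_G[L]$ and $g_1N_G[L]=g_2N_G[L]$. Thus $X^K$ reduces to a single contractible piece, completing the verification.
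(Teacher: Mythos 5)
Your argument is correct: verifying that $X^K$ is contractible for $K\in\overline{\calF_{n+1}\cap H}$ and empty otherwise, via the double–coset description of $(G\times_{N_G[L]}Y)^K$ and the dichotomy $K\in\overline{\calF_n\cap H}$ versus $K\in\overline{\calF_{n+1}\cap H}-\overline{\calF_n\cap H}$, with the ``exactly one non-empty piece'' step handled by the conjugation-invariance of $\sim$, is precisely the standard proof. The paper itself gives no proof of this statement --- it is quoted verbatim from L\"uck--Weiermann --- and your verification is essentially the argument given in that source, so there is nothing to add beyond noting that in the second case one should also record the existence of a contributing coset (which follows since $I$ is a \emph{complete} set of representatives), a point you treat only implicitly.
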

 
\begin{remark}\label{dimension:LW}
The conditions in \cref{LW} are not restrictive. For instance, to satisfy the  condition $(2)$, we can use the equivariant cellular approximation theorem to assume that the maps $i$ and $f_{[L]}$  are cellular maps for all $[L]\in I$,  and to make the function $f_{[L]}$ an inclusion for every $[L]\in I$,  we can replace the spaces by the mapping cylinders. See \cite[Remark 2.5]{LW12}. 
\end{remark}
Following the notation from \cref{LW} we have  
\begin{corollary}\label{lw:gd:upper:bound}
$\gd_{\overline{\calF_{n+1}\cap H}}(G)\leq \max\{ \gd_{ \overline{\calF_{n}\cap H}}(G)+1, \gd_{ (\overline{\calF_{n+1}\cap H})[L]}(N_{G}[L])| L\in I\}.$
\end{corollary}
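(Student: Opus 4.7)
The plan is to read off this corollary directly from the pushout description of \cref{LW}. Fix models realizing each of the three relevant dimensions: a model $E_0$ for $E_{\overline{\calF_n\cap H}}G$ of dimension $d_0 := \gd_{\overline{\calF_n\cap H}}(G)$ and, for each $[L]\in I$, a model $Y_L$ for $E_{(\overline{\calF_{n+1}\cap H})[L]}N_{G}[L]$ of dimension $d_L := \gd_{(\overline{\calF_{n+1}\cap H})[L]}(N_G[L])$. The model $X$ produced by \cref{LW} will be a $G$-pushout assembled from these pieces, and the aim is to bound $\dim X$ by $\max\{d_0+1,\, d_L : L\in I\}$.

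First, I would handle the top-left corner of the pushout, namely $\bigsqcup_{[L]\in I} G\times_{N_G[L]} E_{(\overline{\calF_n\cap H})\cap N_G[L]}N_G[L]$. The key observation is that the chosen $G$-CW-complex $E_0$, restricted to an $N_G[L]$-CW-complex, has isotropies lying in $\overline{\calF_n\cap H}\cap N_G[L] = (\overline{\calF_n\cap H})\cap N_G[L]$ and is (as an $N_G[L]$-space) contractible on fixed-point subcomplexes for subgroups in this restricted family. Hence $\mathrm{res}^G_{N_G[L]} E_0$ serves as an $N_G[L]$-model for $E_{(\overline{\calF_n\cap H})\cap N_G[L]}N_G[L]$ of dimension at most $d_0$. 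Inducing back up via $G\times_{N_G[L]}(-)$ preserves dimension, so the top-left term can be realized with dimension $\leq d_0$.

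Next, I would invoke \cref{dimension:LW}: by the equivariant cellular approximation theorem and the mapping cylinder trick, I can assume $i$ is an inclusion of $G$-CW-complexes and each $f_{[L]}$ is cellular. The resulting pushout $X$ is built by attaching, over the inclusion $i$, the cylinders/mapping cylinders associated to the $f_{[L]}$. For a $G$-pushout $B \leftarrow A \to C$ with $A\hookrightarrow B$ a cellular inclusion, the standard dimension estimate gives
\[
\dim X \;\leq\; \max\{\dim A + 1,\ \dim B,\ \dim C\},
\]
where the $+1$ accounts for the mapping cylinder on $f_{[L]}$. Plugging in $\dim A \leq d_0$, $\dim B = d_0$, and $\dim C = \max_{L\in I} d_L$ yields
\[
\dim X \;\leq\; \max\bigl\{d_0+1,\ \max_{L\in I} d_L\bigr\},
\]
which is precisely the claimed bound.

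The main (and essentially only) subtle point is the bound $\dim A \leq d_0$: one must check that restriction of the $G$-model $E_0$ along $N_G[L]\hookrightarrow G$ really does produce a classifying space for the smaller family $(\overline{\calF_n\cap H})\cap N_G[L]$, which amounts to verifying that all subgroups of $N_G[L]$ belonging to $\overline{\calF_n\cap H}$ already lie in this intersected family (immediate) and that fixed-point sets remain contractible (immediate, since they are the same subspaces as before). Everything else is routine bookkeeping on the pushout.
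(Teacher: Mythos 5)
Your argument is correct and is exactly the standard dimension count that the paper implicitly relies on: the paper offers no separate proof of this corollary, treating it as immediate from \cref{LW} together with \cref{dimension:LW}, and your filling-in (restricting the chosen model for $E_{\overline{\calF_n\cap H}}G$ to $N_G[L]$ to bound the top-left corner by $d_0$, then using mapping cylinders and the pushout dimension estimate) is the intended reasoning. No gaps; the only point worth checking carefully is the restriction step, and you address it.
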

 
 \subsection*{The push-out of a union of families}
 The following lemma will be also useful.
\begin{lemma}\cite[Lemma~4.4]{DQR11}\label{lemma:union:families}
Let $G$ be a group and $\calF,\, \calG$ be two families of subgroups of $G$.  Choose arbitrary $G$-$CW$-models for $E_{\calF} G$, $E_{\calG}G$ and $E_{\calF\cap\calG} G$. Then, the $G$-$CW$-complex $X$ given by the cellular homotopy $G$-push-out
\[
\xymatrix{
E_{\calF\cap\calG}G \ar[r] \ar[d] & E_{\calF}G \ar[d]\\
E_{\calG}G \ar[r] & X
}
\]
is a model for $E_{\calF\cup\calG}G$.
\end{lemma}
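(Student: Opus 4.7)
The plan is to verify the two defining properties of a model for $E_{\calF\cup\calG}G$: namely, that every isotropy group of $X$ lies in $\calF\cup\calG$, and that $X^H$ is contractible (or at least weakly contractible) for every $H\in\calF\cup\calG$. Since the pushout is cellular and the three input spaces are $G$-CW-complexes, $X$ inherits a natural $G$-CW-structure whose cells come from cells of $E_\calF G$, $E_\calG G$, and $E_{\calF\cap\calG}G$ (the latter appearing in the attaching data, e.g.\ via mapping cylinders). Consequently the stabilizers of cells of $X$ lie in $\calF\cup\calG\cup(\calF\cap\calG)=\calF\cup\calG$, handling the isotropy condition.

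For the fixed-point condition, the key technical input I would use is that for any subgroup $H\leq G$, the functor $(-)^H$ preserves cellular homotopy $G$-pushouts of $G$-CW-complexes, yielding an ordinary homotopy pushout of CW-complexes
\[
\xymatrix{
(E_{\calF\cap\calG}G)^H \ar[r] \ar[d] & (E_\calF G)^H \ar[d]\\
(E_\calG G)^H \ar[r] & X^H.
}
\]
I would then split into cases according to membership of $H$.

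In the first case, $H\in\calF\cap\calG$, all three corners of the homotopy pushout are contractible (by the defining property of each classifying space), hence so is $X^H$. In the second case, $H\in\calF\setminus\calG$, the fixed-point sets $(E_\calG G)^H$ and $(E_{\calF\cap\calG}G)^H$ are both empty, so the homotopy pushout degenerates to $(E_\calF G)^H$, which is contractible. The third case, $H\in\calG\setminus\calF$, is symmetric. Combined with the isotropy check, this shows $X$ satisfies the universal property defining $E_{\calF\cup\calG}G$.

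The main subtlety, and the only place one must be a bit careful, is ensuring the passage to $H$-fixed points really produces a homotopy pushout. This is why the lemma insists the pushout be \emph{cellular}: then, up to replacing one leg by a mapping cylinder, the top horizontal map (or its substitute) is an equivariant cofibration, and taking $H$-fixed points of a $G$-pushout along a $G$-cofibration is still a pushout along a cofibration, in particular a homotopy pushout. Once this is established, the case analysis above is routine, which is why the proof is short and essentially formal given the standard properties of classifying spaces for families.
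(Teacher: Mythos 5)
Your argument is correct and complete: the isotropy check plus the fact that $(-)^H$ commutes with a cellular homotopy $G$-push-out (after replacing a leg by a mapping cylinder), followed by the three-case analysis on $H$, is exactly the standard proof of this fact. The paper itself offers no proof --- it simply cites \cite[Lemma~4.4]{DQR11} --- and your write-up supplies the expected argument, correctly identifying the one genuine subtlety (that passage to $H$-fixed points must still yield a homotopy push-out, which is where cellularity and the cofibration replacement are used).
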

With the notation \cref{lemma:union:families} we have the following
\begin{corollary}\label{upper:bound:gd:two:families}
  $\gd_{\calG\cup\calF}(G)\leq\max\{\gd_{\calF}(G), \gd_{\calG}(G), \gd_{\calG\cap\calF}(G)+1\}.$  
\end{corollary}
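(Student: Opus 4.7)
The plan is to deduce the bound directly from \cref{lemma:union:families}, essentially by realising the push-out there with optimal-dimensional inputs and carefully bookkeeping the cells.

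First, I would pick $G$-CW-models $E_{\calF}G$, $E_{\calG}G$ and $E_{\calF\cap\calG}G$ that realise the respective geometric dimensions $\gd_{\calF}(G)$, $\gd_{\calG}(G)$ and $\gd_{\calF\cap\calG}(G)$. The universal properties of $E_{\calF}G$ and $E_{\calG}G$ (together with the inclusions $\calF\cap\calG\subseteq\calF$ and $\calF\cap\calG\subseteq\calG$) supply canonical $G$-maps $E_{\calF\cap\calG}G\to E_{\calF}G$ and $E_{\calF\cap\calG}G\to E_{\calG}G$, unique up to $G$-homotopy. Using the equivariant cellular approximation theorem, I may replace both of these maps by cellular $G$-maps without changing any of the dimensions involved.

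Next, I would feed this data into \cref{lemma:union:families}: the resulting cellular homotopy $G$-push-out $X$ is a model for $E_{\calF\cup\calG}G$. For the dimension estimate, realise the homotopy push-out in the standard way by replacing one of the cellular maps (say $E_{\calF\cap\calG}G\to E_{\calF}G$) by a $G$-cofibration via the equivariant mapping cylinder, as in \cref{dimension:LW}. Then $X$ is built out of: the cells of $E_{\calF}G$, the cells of $E_{\calG}G$, and cells coming from the cylinder on $E_{\calF\cap\calG}G$. Each family of cells contributes in dimension bounded by the corresponding geometric dimension, so $\dim X \leq \max\{\gd_{\calF}(G),\,\gd_{\calG}(G),\,\gd_{\calF\cap\calG}(G)\}$, and therefore $\gd_{\calF\cup\calG}(G)\leq\dim X$ gives the claimed bound.

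I expect the only real point of care to be the dimension accounting for the mapping cylinder part of the push-out; once the push-out of \cref{lemma:union:families} is set up with the optimal models the rest is immediate, so I would not expect any genuine obstacle beyond routine verification.
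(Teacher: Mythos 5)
Your strategy is the same one the paper intends (the corollary is stated there as an immediate consequence of \cref{lemma:union:families}), but your dimension bookkeeping for the mapping cylinder is off by one, and this is exactly the point you flagged as needing care. When you replace the cellular map $E_{\calF\cap\calG}G\to E_{\calF}G$ by its equivariant mapping cylinder, the new cells are of the form $\sigma\times(0,1)$ with $\sigma$ a cell of $E_{\calF\cap\calG}G$, so they sit in dimension $\dim(\sigma)+1\leq \gd_{\calF\cap\calG}(G)+1$, not $\gd_{\calF\cap\calG}(G)$. Your construction therefore only yields
\[
\gd_{\calF\cup\calG}(G)\leq\max\{\gd_{\calF}(G),\ \gd_{\calG}(G),\ \gd_{\calF\cap\calG}(G)+1\},
\]
which is weaker than the statement as printed; and since $\gd$ is not monotone under passing to subfamilies, you cannot simply absorb the term $\gd_{\calF\cap\calG}(G)+1$ into $\gd_{\calF}(G)$ or $\gd_{\calG}(G)$.

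It is worth observing that the $+1$ version is in fact all the paper ever uses when it applies this corollary to a genuine homotopy push-out: see the term $\gd_{\calG\cap(\calF_{m-1}\cap H)}(G)+1$ in the proof of \cref{upper:bound:geometry:dimension} and the analogous cohomological estimate in the proof of \cref{upper:bound:cd:RAAG}. In the one place where the bound without the $+1$ is needed (the dimension estimate for $Y$ in the proof of \cref{cd:lower:bound}), the paper does extra work: \cref{inclusion} shows that the corner map can be realised as an honest inclusion of $G$-CW-complexes between models of the optimal dimensions, and then Waner's theorem is invoked to replace the homotopy push-out by a strict push-out, so that no cylinder cells are created and the cells of the corner are absorbed into the target. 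To prove the corollary as literally stated you would need such a cofibration replacement in general without raising the dimension of the target, and that is not automatic (the mapping-cylinder trick of \cref{dimension:LW} raises $\dim E_{\calF}G$ to at least $\gd_{\calF\cap\calG}(G)+1$ unless $\gd_{\calF\cap\calG}(G)<\gd_{\calF}(G)$ already). So either prove the corollary with the $+1$, which suffices for the paper's general applications, or supply the missing inclusion argument.
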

\subsection*{Nested families}

Given a group $G$ and two nested families $\calF\subseteq \calG$ of $G$, we will use the following propositions to bound the geometric dimension $\gd_{\calF}(G)$ using the geometric dimension $\gd_{\calG}(G)$.

\begin{proposition}\cite[Proposition 5.1 (i)]{LW12}\label{nested:families:lw}
 Let $G$ be a group and let $\calF$ and $\calG$ be
two families of subgroups such that $\calF\subseteq \calG$. Suppose for every $H\in \calG$ we have $\gd_{\calF\cap H}(H)\leq d$. Then $\gd_{\calF}(G)\leq \gd_{\calG}(G)+d.$
\end{proposition}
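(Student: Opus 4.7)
\emph{Proof plan.} The plan is to construct explicitly a $G$-CW model for $E_{\calF}G$ of dimension at most $\gd_{\calG}(G)+d$, starting from a minimal model $Y$ for $E_{\calG}G$ and equivariantly ``thickening'' each cell orbit using the models $E_{\calF\cap H}H$ guaranteed by the hypothesis. First I would fix $Y$ with $\dim Y=\gd_{\calG}(G)$ and, for every $H\in\calG$ that occurs as the isotropy of a cell of $Y$, fix an $H$-CW model $X_H$ for $E_{\calF\cap H}H$ of dimension at most $d$.

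Next I would build a $G$-CW complex $X$ together with a $G$-map $\pi\colon X\to Y$ by induction on the skeleta of $Y$. On the $0$-skeleton $Y^{(0)}=\bigsqcup_{\alpha}G/H_\alpha$ I would set $X^{(0)}=\bigsqcup_{\alpha}G\times_{H_\alpha}X_{H_\alpha}$ with the obvious projection, and maintain throughout the induction the invariants that (i) every isotropy of $X^{(n)}$ lies in $\calF$, (ii) $\dim X^{(n)}\le n+d$, and (iii) for every $K\in\calF$ the restricted map $(\pi^{(n)})^{K}\colon (X^{(n)})^{K}\to (Y^{(n)})^{K}$ is a homotopy equivalence. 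In the inductive step each $(n+1)$-cell of $Y$ attached along a $G$-map $G/K\times S^{n}\to Y^{(n)}$ should be replaced by the thicker cell $G\times_{K}X_{K}\times D^{n+1}$; the required $G$-equivariant lift $G\times_{K}X_{K}\times S^{n}\to X^{(n)}$ of the original attaching map is obtained by equivariant obstruction theory, the crucial input being that $X_{K}\times S^{n}$ has $K$-isotropies in $\calF\cap K$ and $(\pi^{(n)})^{L}$ is a homotopy equivalence for every $L\in\calF\cap K$.

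To finish, I would verify that $X$ is a model for $E_{\calF}G$ using the fixed-point criterion. By construction $\dim X\le\gd_{\calG}(G)+d$ and every isotropy of $X$ belongs to $\calF$, so $X^{K}=\emptyset$ whenever $K\notin\calF$; for $K\in\calF\subseteq\calG$ the set $Y^{K}$ is contractible, and the homotopy equivalence $\pi^{K}\colon X^{K}\to Y^{K}$ coming from (iii) forces $X^{K}$ to be contractible as well, giving $\gd_{\calF}(G)\le\gd_{\calG}(G)+d$. The hardest part will be the inductive equivariant lifting, where one must preserve the isotropy, dimension, and $\calF$-fixed-point invariants simultaneously; this is exactly where the combination of the contractibility of $X_{K}^{L}$ for $L\in\calF\cap K$ with the inductive fixed-point equivalence pays off, supplying precisely the data needed to run Bredon's obstruction theory, modulo a routine application of equivariant cellular approximation to guarantee that the new attaching maps are cellular.
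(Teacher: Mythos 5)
Your proposal is correct and is essentially the argument behind the result as the paper uses it: the paper gives no proof of its own, citing \cite[Proposition 5.1 (i)]{LW12}, and the proof there is exactly this skeletal induction in which each equivariant cell $G/K\times D^{n+1}$ of a minimal model for $E_{\calG}G$ is replaced by $G\times_{K}\bigl(E_{\calF\cap K}K\times D^{n+1}\bigr)$, with the attaching maps lifted via the $\calF$-fixed-point equivalence and equivariant obstruction theory. Your bookkeeping of the three invariants (isotropy in $\calF$, dimension $\le n+d$, and homotopy equivalence on $L$-fixed sets for $L\in\calF$) is precisely what makes the lifting and the final fixed-point verification go through.
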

The proof of the following proposition is implicit in \cite[Proof~of~theorem~3.1]{Lu00} and \cite[Proposition~5.1]{LW12}.
\begin{proposition}\label{prop:haefliger}
Let $G$ be a group. Let $\calF$ and $\calG$ be families of subgroups of $G$ such that $\calF\subseteq \calG$. If $X$ is a model for $E_\calG G$, then
\[\gd_{\calF}(G)\leq \max\{\gd_{\calF\cap G_\sigma}(G_{\sigma})+\dim(\sigma)| \ 
\sigma \text{ is a cell of } \ X \}.\] 
\end{proposition}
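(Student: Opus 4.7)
The plan is to construct a model $Y$ for $E_\calF G$ by ``thickening'' each cell of $X$ with a classifying space for the appropriate stabilizer, and then to read off the dimension bound directly from the construction. More precisely, for each $G$-orbit of cells of $X$ with representative $\sigma$ and stabilizer $G_\sigma\in\calG$, I would choose an equivariant model $E_\sigma$ for $E_{\calF\cap G_\sigma}(G_\sigma)$ realizing the minimum $\gd_{\calF\cap G_\sigma}(G_\sigma)$. The goal is to produce $Y$ as a $G$-CW-complex equipped with a $G$-map $Y\to X$ whose $H$-fixed-point restriction is a weak equivalence for every $H\in\calF$.

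The construction proceeds by induction on the skeleta of $X$. Starting with $Y_{-1}=\emptyset$, suppose $Y_{n-1}$ has been built together with a $G$-map $Y_{n-1}\to X_{n-1}$ inducing weak equivalences on $H$-fixed points for every $H\in\calF$. For each representative $\sigma$ of a $G$-orbit of $n$-cells, the attaching map is a $G_\sigma$-equivariant map $\phi_\sigma\colon S^{n-1}\to X_{n-1}$ (with trivial $G_\sigma$-action on $S^{n-1}$); since the source $E_\sigma\times S^{n-1}$ carries $G_\sigma$-isotropy inside $\calF\cap G_\sigma$, equivariant obstruction theory applied to the map $Y_{n-1}\to X_{n-1}$ allows me to lift the composite $E_\sigma\times S^{n-1}\to S^{n-1}\to X_{n-1}$ to a $G_\sigma$-map $\tilde\phi_\sigma\colon E_\sigma\times S^{n-1}\to Y_{n-1}$. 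One then forms $Y_n$ as the $G$-pushout of $Y_{n-1}$ and $\bigsqcup_\sigma G\times_{G_\sigma}(E_\sigma\times D^n)$ along the induced map $\bigsqcup_\sigma G\times_{G_\sigma}\tilde\phi_\sigma$, and sets $Y=\bigcup_n Y_n$.

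To finish, I would verify that $Y$ is a model for $E_\calF G$. By construction all isotropy groups of $Y$ lie in $\calF$. For $H\in\calF$, every open cell of $Y^H$ projects onto an open cell of $X^H$ with fibre $E_\sigma^H\times D^n$ contractible (using that $E_\sigma^H$ is contractible for every $H\in\calF\cap G_\sigma$); combined with the fact that $X^H$ is contractible (since $\calF\subseteq\calG$), a cellular inductive argument shows $Y^H$ is contractible. Since each cell $\sigma$ contributes cells of dimension at most $\gd_{\calF\cap G_\sigma}(G_\sigma)+\dim(\sigma)$ to $Y$, one obtains
\[
\gd_\calF(G)\le \dim(Y)\le \max_\sigma\bigl\{\gd_{\calF\cap G_\sigma}(G_\sigma)+\dim(\sigma)\bigr\}.
\]

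The main obstacle is the equivariant lifting of attaching maps at each inductive step: producing $\tilde\phi_\sigma$ requires both the inductive hypothesis on fixed-point sets and the universal property of $E_{\calF\cap G_\sigma}(G_\sigma)$, applied cell-by-cell in the source. Once this lifting is justified, the assembly of the $G$-pushouts and the dimension count are routine.
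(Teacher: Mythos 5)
The paper states this proposition without proof (it is quoted as a known result; the label suggests it is taken from Haefliger's complexes-of-groups circle of ideas, and it is essentially the construction behind \cite[Proposition 5.1]{LW12}, stated just above it), so there is no in-paper argument to compare against. Your skeletal induction --- replacing each equivariant cell $G\times_{G_\sigma}D^n$ of $X$ by $G\times_{G_\sigma}(E_\sigma\times D^n)$ with $E_\sigma$ a minimal-dimensional model for $E_{\calF\cap G_\sigma}G_\sigma$, lifting the attaching maps through $Y_{n-1}\to X_{n-1}$ by the equivariant Whitehead theorem, and reading off the dimension bound --- is the standard proof of this fact and is essentially correct. Two points deserve more care than you give them. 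First, the lift $\tilde\phi_\sigma$ exists only up to $G_\sigma$-homotopy, so the square you want to push out along commutes only up to homotopy; to keep an honest $G$-map $Y_n\to X_n$ for the next inductive step you must either work with homotopy pushouts throughout or rectify via mapping cylinders (as in \cref{dimension:LW}). Second, in the fixed-point verification the fibre of $\left(G\times_{G_\sigma}(E_\sigma\times D^n)\right)^H\to\left(G\times_{G_\sigma}D^n\right)^H$ over the cell indexed by $gG_\sigma\in (G/G_\sigma)^H$ is $E_\sigma^{g^{-1}Hg}\times D^n$, not $E_\sigma^{H}\times D^n$; this is still contractible because $g^{-1}Hg\in\calF\cap G_\sigma$, and then the gluing lemma gives that $Y_n^H\to X_n^H$ is a weak equivalence, whence $Y^H$ is contractible since $X^H$ is. With these standard adjustments your argument is complete.
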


\subsection*{Bredon cohomology}
In this subsection, we recall the definition of Bredon cohomology, the cohomological dimension for families and its connection with the geometric dimension for families. For further details see \cite{MV03}. 

Fix a group $G$ and $\calF$ a family of subgroups of $G$. The \emph{orbit category} $\mathcal{O}_{\calF}G$ is the category whose objects are $G$-homogeneous spaces $G/H$ with $H\in \calF$ and morphisms are $G$-functions. The \emph{category of Bredon modules}  is the category whose objects are contravariant functors $M\colon \mathcal{O}_{\calF}G \to Ab$ from the orbit category to the category of  abelian groups, and morphisms are natural transformations $f\colon M\to N$. This is an abelian category  with enough projectives. The constant Bredon module  $\underline{\mathbb{Z}}\colon \mathcal{O}_{\calF}G \to Ab$ is defined in objects by $\underline{\mathbb{Z}}(G/H)=\mathbb{Z}$ and in morphisms by $\underline{\mathbb{Z}}(\varphi)=id_{\mathbb{Z}}$. 
Let $P_{\bullet}$ be a projective resolution of the  Bredon module $\undZ$, and $M$ be a Bredon module.  We define the Bredon cohomology  of $G$ with coefficients in $M$ as 
\[H_{\calF}^{*}(G;M)=H_{*}(\mor (P_{\bullet},M)).\]

We define the \emph{$\calF$-cohomological dimension} of $G$ as 
\[\cd_{\calF}(G)=\max\{n\in \mathbb{N}| \text{ there is a Bredon module }M, H_{\calF}^{*}(G;M)\neq 0 \}.\]

We have the following Eilenberg-Ganea type theorem that relates the $\calF$-cohomological dimension and the $\calF$-geometric dimension.

\begin{theorem}\cite[Theorem 0.1]{LM00}\label{geometric:cohomological:dimension} Let $G$ be a group and $\calF$ be  a family of subgroups of $G$. Then  \[\cd_{\calF}(G)\leq \gd_{\calF}(G)\leq \max\{\cd_{\calF}(G), 3\}.\]
\end{theorem}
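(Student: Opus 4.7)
My plan is to prove the two inequalities separately, following the spirit of the classical Eilenberg--Ganea theorem translated into the Bredon setting via fixed-point-set obstruction theory.

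For the first inequality $\cd_{\calF}(G)\leq \gd_{\calF}(G)$, I would take a model $X$ for $E_{\calF}G$ of minimal dimension $n = \gd_{\calF}(G)$ and extract from it an algebraic resolution. The key observation is that for each $H\leq G$, the cellular chain complex $C_*(X^H;\mathbb{Z})$ is functorial in morphisms of the orbit category $\mathcal{O}_{\calF}G$, so the assignment $G/H\mapsto C_k(X^H;\mathbb{Z})$ defines a Bredon module $\underline{C}_k(X)$ for each $k$. Since $X$ is a $G$-CW-complex with isotropy in $\calF$, each $\underline{C}_k(X)$ is a free Bredon module with one generator $\mathbb{Z}[\mor(-,G/H)]$ per orbit of $k$-cells of type $G/H$. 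Because $X^H$ is contractible for every $H\in\calF$, the augmented complex $\underline{C}_*(X)\to \underline{\mathbb{Z}}\to 0$ is exact, producing a free (hence projective) resolution of $\underline{\mathbb{Z}}$ of length $n$, and therefore $\cd_{\calF}(G)\leq n$.

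For the second inequality, set $c=\cd_{\calF}(G)$ and aim to construct a model for $E_{\calF}G$ of dimension at most $\max\{c,3\}$. The strategy is inductive attachment of equivariant cells, controlled by Bredon cohomology as an obstruction theory. Starting from a $0$-skeleton $X_0=\bigsqcup_{H\in\calF} G/H$ and using that a $G$-CW-complex $Y$ is a model for $E_{\calF}G$ precisely when each $Y^K$ is contractible for $K\in\calF$ and empty for $K\notin\calF$, I would build $X_k$ from $X_{k-1}$ by attaching equivariant cells $G/H\times D^k$ to kill the Bredon homotopy module $\underline{\pi}_{k-1}(X_{k-1})$, given by $G/H\mapsto \pi_{k-1}(X_{k-1}^H)$. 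The obstructions to carrying out the induction at stage $k$ lie in Bredon cohomology groups of the form $H^{k+1}_{\calF}(G;\underline{\pi}_{k})$, and these vanish in the range $k\geq c$ by definition of $c$. To realize the vanishing by cells, one uses that a projective Bredon module is a summand of a free one, attaching cells corresponding to the ambient free module. The upshot is that for $c\geq 3$ the induction terminates at dimension exactly $c$; for $c\leq 2$, one must still reach a state where attaching $3$-cells kills $\pi_2$, and this forces the bound $\max\{c,3\}$.

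The main obstacle is the final step: passing from the algebraic projective resolution to a geometric model of the right dimension, precisely because projective Bredon modules need not be free. This is the standard Eilenberg--Ganea phenomenon, and handling it rigorously requires the Bredon version of Wall's finiteness/realization obstruction machinery (showing that projective summands can be cancelled geometrically as long as one has an extra free cell available), which is exactly where the $\max\{c,3\}$ slack enters. Apart from this subtlety, the proof reduces to the two standard ingredients \textemdash{} the equivariant Whitehead theorem (a $G$-map of $G$-CW-complexes with isotropy in $\calF$ is a $G$-homotopy equivalence iff it induces weak equivalences on all $H$-fixed sets, $H\in\calF$) and equivariant obstruction theory for extending maps across cells.
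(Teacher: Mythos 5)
The paper does not prove this statement at all: it is imported verbatim as \cite[Theorem 0.1]{LM00} (Lück--Meintrup), so there is no internal proof to compare yours against. Your outline follows exactly the strategy of that reference. The first half is essentially complete and correct: the cellular Bredon chain complex $G/H\mapsto C_*(X^H)$ of an $n$-dimensional model is a length-$n$ free resolution of $\underline{\mathbb{Z}}$ because each $X^H$, $H\in\calF$, is contractible, giving $\cd_{\calF}(G)\leq\gd_{\calF}(G)$.

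For the second half, however, what you have written is an honest sketch rather than a proof: the entire substance of the theorem lies in the step you defer, namely realizing a length-$c$ projective Bredon resolution by a $c$-dimensional $G$-CW-complex when $c\geq 3$. The standard route (and the one in \cite{LM00}) is not quite the obstruction-class formulation you state; rather, one builds a $(c-1)$-dimensional complex $X_{c-1}$ with all fixed sets $(c-2)$-connected, observes that $\cd_{\calF}(G)=c$ forces the kernel module $\underline{Z}_{c-1}=\ker(\underline{C}_{c-1}\to\underline{C}_{c-2})$ to be projective (a syzygy/dimension-shifting lemma), stabilizes it to a free module by adding trivially attached cells in dimensions $c-1$ and $c$ (an Eilenberg swindle), and then attaches $c$-cells along maps realizing a basis, using the Bredon--Hurewicz theorem to identify $\underline{\pi}_{c-1}$ with $\underline{Z}_{c-1}$ --- which is where the hypothesis $c\geq 3$ (equivalently the $\max\{c,3\}$ slack) is genuinely used, since Hurewicz and the cell-attachment control of $\pi_{c-1}$ require $c-1\geq 2$. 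If you intend this as a self-contained proof you must supply that syzygy argument, the stabilization, and the equivariant Hurewicz/Whitehead input explicitly; as it stands the hard direction is cited in spirit rather than proved, which is no more information than the paper's own citation of \cite{LM00}.
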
 
This \cref{geometric:cohomological:dimension} together with the following Mayer-Vietoris sequence  will be used to give lower bounds for the $\calF$-geometric dimension $\gd_{\calF}(G)$.
\subsection*{Mayer-Vietoris sequence}
Following the notation of \cref{LW}, by \cite[Proposition 7.1]{MR3252961} \cite{LW12} we have the next long exact sequence 
\begin{equation*}
\begin{split}  
\cdots\to H^{n}( X/G)\to \left(\prod_{L\in I}H^{n}(E_{ (\overline{\calF_{n+1}\cap H})[L]}N_{G}[L]/N_G[H])\right)\oplus H^n( E_{\overline{\calF_n\cap H}}G/G)\to  & \\
 \prod_{L\in I}H^{n}( E_{(\overline{\calF_n\cap H})\cap N_{G}[L]}N_{G}[L]/N_G[L])\to H^{n+1}(X/G)\to \cdots& 
  \end{split}  
\end{equation*} 
\begin{remark}\label{cohomological:version}
The results presented in \cref{lw:gd:upper:bound}, \cref{upper:bound:gd:two:families}, and \cref{nested:families:lw} have cohomological counterparts. Specifically, if we replace $\gd_{\calF}$ with $\cd_{\calF}$, all the results hold true, see for instance \cite[Remark 2.9]{tomasz}. 
\end{remark}

\section{The $\calF_k$-dimension of a virtually $\Z^n$ group}\label{sec:3}
The objective of this section is to prove \cref{Virtually:dimension:Z}. Let $G$ be a virtually $\Z^n$ group. By \cite[Proposition 1.3]{tomasz}, \cref{geometric:cohomological:dimension} and since the $\calF$-cohomological dimension is monotone, we have for all $0\leq k<n$ the following inequalities $$n+k\geq \gd_{\calF_k}(G)\geq \cd_{\calF_k}(G) \geq \cd_{\calF_k\cap\Z^n}(\Z^n).$$ Therefore, to prove \cref{Virtually:dimension:Z}, it is enough to show that  $\cd_{\calF_k\cap\Z^n}(\Z^n)\geq n+k$ for $0\leq k< n$.  In \cref{cd:lower:bound}, we prove this inequality. In order to prove  \cref{cd:lower:bound} we need  \cref{upper:bound:geometry:dimension},   Mayer-Vietoris sequence, \cref{inclusion}, and \cref{property:max:2}.

\begin{lemma}\label{upper:bound:geometry:dimension}
Let $k,t,n \in \mathbb{N}$ such that $0\leq k< t\leq n$. Let $H$ be a subgroup of $\Z^n$ of $\rank$ $t$, then $\gd_{ \calF_k\cap H}(\Z^n)\leq n+k$.
\end{lemma}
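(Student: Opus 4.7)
I would prove this by induction on $k$. The base case $k = 0$ is immediate: since $\Z^n$ is torsion-free, $\calF_0 \cap H = \{0\}$, and $\R^n$ with the translation action is a model for $E_{\{0\}}\Z^n$ of dimension $n = n + 0$.

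For the inductive step, I would apply the Lück--Weiermann pushout to the nested families $\calF_{k-1} \cap H \subseteq \calF_k \cap H$. Since $\Z^n$ is abelian, the commensurator $N_{\Z^n}[L]$ equals $\Z^n$ for every $L$, so \cref{lw:gd:upper:bound} yields
\[
\gd_{\calF_k \cap H}(\Z^n) \leq \max\left\{\gd_{\calF_{k-1} \cap H}(\Z^n) + 1,\; \sup_{[L] \in I}\, \gd_{(\calF_k \cap H)[L]}(\Z^n)\right\},
\]
where $I$ runs over commensurability-class representatives of rank-$k$ subgroups of $H$. The first term is at most $n + k$ by the inductive hypothesis.

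For the second term, let $L \in I$ and let $\tilde L^H$ denote the saturation of $L$ in $H$; this is a rank-$k$ direct summand of $H$. Writing
\[
(\calF_k \cap H)[L] = (\calF_{k-1} \cap H) \cup (\calF_k \cap \tilde L^H),
\]
and applying \cref{upper:bound:gd:two:families} reduces the task to bounding each piece. The summand $\calF_{k-1} \cap H$ is handled by the inductive hypothesis, and $\calF_{k-1} \cap \tilde L^H$ is handled by the inductive hypothesis applied to the rank-$k$ subgroup $\tilde L^H$ (since $k - 1 < k$). The remaining summand $\calF_k \cap \tilde L^H$ consists of all subgroups of $\tilde L^H$; I would bound its $\gd$ over $\Z^n$ by exploiting the fact that the saturation $\tilde L$ of $L$ in $\Z^n$ is a rank-$k$ direct summand, yielding a decomposition $\Z^n = \tilde L \oplus Q'$ with $Q' \cong \Z^{n-k}$ that supports a product-type model of dimension at most $n$.

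The main technical obstacle is the bound on $\gd_{\calF_k \cap \tilde L^H}(\Z^n)$: when $\tilde L^H$ is a proper finite-index subgroup of $\tilde L$, naively using the product $\text{pt} \times \R^{n-k}$ would force the isotropy up to $\tilde L$ rather than $\tilde L^H$. Overcoming this requires either selecting the representative $L$ in each commensurability class so that $\tilde L \subseteq H$ (in which case $\tilde L^H = \tilde L$ and the product model works directly), or else absorbing the finite-index extension $\tilde L^H \hookrightarrow \tilde L$ into the product decomposition in a way compatible with the given inclusion $H \hookrightarrow \Z^n$, thereby avoiding the infinite-dimensional contributions that would otherwise arise from the finite quotient $\tilde L / \tilde L^H$.
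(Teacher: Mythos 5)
Your overall strategy coincides with the paper's: induct on $k$, apply the L\"uck--Weiermann pushout (\cref{lw:gd:upper:bound}) with all commensurators equal to $\Z^n$, and split $(\calF_k\cap H)[L]$ as $(\calF_{k-1}\cap H)\cup \sub(\tilde{L}^{H})$, bounding the two pieces and their intersection via \cref{upper:bound:gd:two:families}. (Your treatment of the intersection term --- applying the inductive hypothesis at level $k-1$ to the rank-$k$ subgroup $\tilde{L}^{H}$ --- is a slightly cleaner route than the paper's appeal to \cref{nested:families:lw}, but it gives the same count.) The problem is exactly the step you single out as ``the main technical obstacle'': bounding $\gd_{\sub(\tilde{L}^{H})}(\Z^n)$ when $\tilde{L}^{H}\subsetneq\tilde{L}$. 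Neither of your escape routes works. For the first: the saturation $\tilde{L}$ in $\Z^n$ is an invariant of the commensurability class, so you cannot re-choose the representative $L$ to force $\tilde{L}\subseteq H$; either every representative satisfies this or none does, and the latter occurs precisely when $H$ is not saturated. For the second: no absorption trick can succeed, because $\gd_{\sub(\tilde{L}^{H})}(\Z^n)=\infty$ whenever $\tilde{L}^{H}\subsetneq\tilde{L}$. Indeed, in any model $X$ the fixed set $X^{\tilde{L}^{H}}$ must be nonempty and contractible while $X^{K}=\emptyset$ for every $K\not\leq\tilde{L}^{H}$, so the nontrivial finite group $\tilde{L}/\tilde{L}^{H}$ acts freely on the contractible complex $X^{\tilde{L}^{H}}$, forcing it to be infinite dimensional.

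The same obstruction shows the statement itself fails for non-saturated $H$, so the gap is not closable as stated. Take $n=2$, $H=2\Z\oplus 2\Z$, $k=1$, and $C=\langle(2,0)\rangle\in\calF_1\cap H$; since $\langle(1,0)\rangle\not\leq H$, any model $X$ for $E_{\calF_1\cap H}\Z^2$ has $X^{\langle(1,0)\rangle}=\emptyset$ while $X^{C}$ is contractible, so the involution induced by $(1,0)$ acts freely on $X^{C}$ and $\gd_{\calF_1\cap H}(\Z^2)=\infty$, not $\leq 3$. The lemma (and your proof) becomes correct once one adds the hypothesis that $H$ is maximal in $\calF_t-\calF_{t-1}$, i.e.\ a direct summand of $\Z^n$: then $\tilde{L}\subseteq H$ automatically, $\tilde{L}^{H}=\tilde{L}$, and your product model --- equivalently the paper's pullback of $E_{\calF_0}(\Z^n/L)$ along $\Z^n\to\Z^n/L$ --- does give the bound $n-k$. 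For what it is worth, the paper's own proof silently elides this same point, and the lemma is only ever invoked for maximal subgroups (via \cref{property:max:2}, in \cref{inclusion} and \cref{cd:lower:bound}), so the omission is harmless downstream; but the step you flagged is a genuine gap in your argument for the statement as written, and you should record the maximality hypothesis rather than hope to argue around it.
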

\begin{proof} 
The proof is by induction on $k$. Let $G=\Z^n$.
For $k=0$ we have 
    $\gd_{ \calF_0\cap H}(G)=\gd_{}(G)=n.$ 
    Suppose that the inequality is true for all $k<m$. We prove that the inequality is true for $k=m$. Let $\sim$ be the equivalence relation on $\calF_m\cap H-\calF_{m-1}\cap H$ defined by commensurability, and let $I$ a complete set of representatives classes in $(\calF_m\cap H-\calF_{m-1}\cap H)/\sim$. By \cref{lw:gd:upper:bound} and \cref{families} we have 
\[\gd_{\calF_m\cap H}(G)\leq\max\{\gd_{ \calF_{m-1}\cap H}(G)+1, \gd_{ (\calF_m\cap H)[L]}(G)| L\in I\}\leq\max\{n+m, \gd_{ (\calF_m\cap H)[L]}(G)| L\in I\}\]
then to prove that $\gd_{ \calF_m\cap H}(G)\leq n+m$ it is enough to prove that $\gd_{(\calF_m\cap H)[L]}(G)\leq n+m$ for all $L\in I$. Let $L\in I$.  We can write the family  $$(\calF_m\cap H)[L]=\{ K\leq G|K\in  \calF_m\cap H- \calF_{m-1}\cap H, K\sim L\}\cup (\calF_{m-1}\cap H)$$  as the union of two families $ (\calF_m\cap H)[L]=\calG \cup (\calF_{m-1}\cap H)$ where $\calG$ is the family generated by $\{ K\leq G|K\in  \calF_m\cap H- \calF_{m-1}\cap H, [K]=[L]\}$. By \cref{upper:bound:gd:two:families} we have          \[
\begin{split}
\gd_{ (\calF_m\cap H)[L]}(G) &\leq\max\{\gd_{ \calF_{m-1}\cap H}(G), \gd_{\calG \cap ( \calF_{m-1}\cap H)}(G)+1, \gd_{\calG}(G)\}\\
              &\leq\max\{n+m-1, \gd_{\calG \cap ( \calF_{m-1}\cap H)}(G)+1, \gd_{\calG}(G)\},\text{by induction hypothesis}.\end{split}
              \] 
We prove the following inequalities 
\begin{enumerate}[i)]
    \item $\gd_{\calG}(G)\leq n-m$,
    \item $\gd_{\calG \cap (\calF_{m-1}\cap H)}(G)\leq n+m-1 $
\end{enumerate}
and as a consequence we will have $\gd_{\calF_m\cap H[L]}(G)\leq n+m$. First, we prove item $i)$. Note that a model for $E_{\calF_0}(G/L)$ is a model for $E_\calG G$ via the action given by the  projection $G\to G/L$. Since   $ G/L$ is virtually $\Z^{n-m}$  by \cite[Proposition 1.3]{tomasz} we have $\gd_{\calF_0}(G/L)\leq n-m$.

Now we prove item $ii)$. Applying  \cref{nested:families:lw} to the inclusion of families $\calG\cap ( \calF_{m-1}\cap H) \subset \calG$ we get
\[\gd_{\calG \cap (\calF_{m-1}\cap H)}(G)\leq \gd_\calG(G)+d\]
for some $d$ such that for any $K\in \calG$ we have $\gd_{\calG\cap (\calF_{m-1}\cap H)\cap K}(K)\leq d$.
Since we already proved $\gd_{\calG}(G)\leq n-m$, our next task is to show that $d$ can be chosen to be equal to $2m-1$.

Recall that any $K\in \calG$ is virtually $\Z^t$ for some $0\leq t \leq m$. We split our proof into two cases. First assume that $K\in \calG$ is virtually $\Z^t$ for some $0\leq t \leq m-1$. Hence $K$ belongs to $ \calF_{m-1}\cap H$, it follows that $K$ belongs to $\calG \cap ( \calF_{m-1}\cap H)$ and we conclude $\gd_{(\calG \cap (\calF_{m-1}\cap H))\cap K}(K)=0$. Now assume $K\in \calG$ is virtually $\Z^{m}$. We claim that $(\calG \cap ( \calF_{m-1}\cap H))\cap K= \calF_{m-1}\cap K$. The inclusion $(\calG \cap (\calF_{m-1}\cap H))\cap K \subset \calF_{m-1}\cap K$ is clear since  $ \calF_{m-1}\cap H\subset\calF_{m-1}$. For the other inclusion let $M\in \calF_{m-1}\cap K$. Since $K\leq H$ we get $\calF_{m-1}\cap K \subseteq \calF_{m-1}\cap H$ and as a consequence $M\in \calF_{m-1}\cap H$,  on the other hand $M\leq K\in \calG$, therefore $M\in (\calG \cap ( \calF_{m-1}\cap H))\cap K$. This establishes the claim. We conclude that \[\gd_{(\calG \cap ( \calF_{m-1}\cap H))\cap K}(K)=\gd_{\calF_{m-1}\cap  K}(K)\leq m+m-1=2m-1\] where the inequality follows from \cite[Proposition 1.3]{tomasz}. 
\end{proof}

The following proposition is a mild generalization of \cite[Lemma 2.3]{victor:nucinkis:corob}.
\begin{proposition}
    Let $H$ be a subgroup of $\Z^n$  that is maximal in $\calF_t-\calF_{t-1}$. Then, for all $0\le k\le t$, each $L\in (\calF_k\cap H- \calF_{k-1}\cap H)$ is contained in a unique maximal element $M\in  (\calF_k- \calF_{k-1})$ and $M$ is a subgroup of $H$.
\end{proposition}
\begin{proof}
We have two cases $\rank(H)=n$ or $\rank(H)<n$. In the first case, by the maximality of $H$ we have that $H=\Z^n$ and $\calF_{k}\cap H=\calF_k$. Let $L\in (\calF_k- \calF_{k-1})$, we  consider the following short exact sequence:
\[1 \to L \to \mathbb{Z}^n \xrightarrow[]{p} \mathbb{Z}^n / L \to 1. \]
Since \(\operatorname{rank}(\mathbb{Z}^n) = \operatorname{rank}(L) + \operatorname{rank}(\mathbb{Z}^n / L)\) and by the classification theorem of finitely generated abelian groups, we have that \(\mathbb{Z}^n / L\) is isomorphic to \(\mathbb{Z}^{n-k} \oplus F\) where \(F\) is the torsion part. Therefore, it is clear that \(p^{-1}(F)\) is the unique maximal subgroup of \(\mathbb{Z}^n\) of rank \(k\) that contains \(L\).

Suppose that $\rank(H)=t<n$.  Let $L\in \calF_{k}\cap H-  \calF_{k-1}\cap H$, in particular $L\in \calF_k$ then by the first case  $L$ is contained in a unique maximal $M\in \calF_{k}- \calF_{k-1}$. We claim that $M\leq H$. Note that $MH$ is virtually $\Z^t$ because $$[HM:H]=[M:M\cap H]\le [M:L]<\infty,$$ 
it follows that  $MH\in \calF_t$, and then the maximality of $H$ implies   $H=MH$. This finishes the proof of claim.
Now it is easy to see that  $M\in  \calF_{k}\cap H- \calF_{k-1}\cap H$ is the unique maximal in  $\calF_{k}\cap H-  \calF_{k-1}\cap H$  containing $L$.  In fact, suppose that there is another  $N\in \calF_{k}\cap H-  \calF_{k-1}\cap H$ that is maximal and contains $L$. Then we have
$$[NM:N]=[M:M\cap N]\le [M:L]<\infty,$$
which implies  $NM\in \calF_k\cap H$. This contradicts the maximality of $N$.  
\end{proof}

\begin{corollary}\label{property:max:2}
    Let $H$ be a subgroup of $\Z^n$  that is maximal in $\calF_t-\calF_{t-1}$. Then, for all $0\le k\le t$ the following statements  hold
    \begin{enumerate}[a)]
        \item Each $L\in (\calF_k\cap H- \calF_{k-1}\cap H)$ is contained in a unique maximal element $M\in  (\calF_k\cap H- \calF_{k-1}\cap H)$.
        
         \item Let   $S\in (\calF_{k}\cap H-  \calF_{k-1}\cap H)$ be a maximal element, then $S$ is maximal in $\calF_k-\calF_{k-1}$.
         \end{enumerate}
      \end{corollary}

\begin{lemma}\label{lem:aux:gd:sub(L)}
Let $n, t\in \N$ such that $0\leq t<n$. Let $L$ be a subgroup of $\Z^n$ that is maximal in $\calF_t-\calF_{t-1}$. Let $\sub(L)$ be the family of all the subgroups of $L$. Then $\gd_{\sub(L)}(\Z^n)\leq n-t$.
\end{lemma}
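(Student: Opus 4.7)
The plan is to produce an explicit model for $E_{\sub(L)}\Z^n$ of dimension $n-t$ by pulling back the standard classifying space of a torsion-free quotient of $\Z^n$. The key observation is that the maximality hypothesis on $L$ forces the quotient $\Z^n/L$ to be torsion-free.

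First I would establish the algebraic lemma that $\Z^n/L\cong \Z^{n-t}$. Indeed, any subgroup $L\leq \Z^n$ of rank $t$ is isomorphic to $\Z^t$ and $\Z^n/L$ is a finitely generated abelian group of rank $n-t$. If $\Z^n/L$ had a nontrivial torsion element, there would exist $x\in \Z^n\setminus L$ with $kx\in L$ for some $k>0$; then $L':=L+\langle x\rangle$ would satisfy $L\lneq L'\leq \Z^n$ with $L'$ still of rank $t$, contradicting that $L$ is maximal in $\calF_t-\calF_{t-1}$. Hence the torsion part of $\Z^n/L$ is trivial and $\Z^n/L\cong \Z^{n-t}$.

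Next, let $\pi\colon \Z^n\to \Z^n/L\cong \Z^{n-t}$ be the quotient map, and let $\Z^n$ act on $X:=\dbR^{n-t}$ via $\pi$ and the standard translation action of $\Z^{n-t}$ on $\dbR^{n-t}$. This action is cocompact and gives $X$ the structure of a $\Z^n$-CW-complex of dimension $n-t$. I would then verify the two defining properties of $E_{\sub(L)}\Z^n$ by computing fixed-point sets. For any $H\leq L$, every element of $H$ lies in $\ker\pi$ and acts trivially on $X$, so $X^H=\dbR^{n-t}$ is contractible; for any $H\leq \Z^n$ with $H\not\leq L$, there exists $h\in H$ with $\pi(h)\neq 0$, which acts on $\dbR^{n-t}$ as a nontrivial translation and hence fixes no point, so $X^H=\emptyset$. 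Therefore all isotropies lie in $\sub(L)$, and $X^H$ is contractible exactly when $H\in \sub(L)$, which is the standard characterization of a classifying space for the family $\sub(L)$.

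Concluding, $X=\dbR^{n-t}$ is a model for $E_{\sub(L)}\Z^n$ of dimension $n-t$, which yields $\gd_{\sub(L)}(\Z^n)\leq n-t$. No part of the argument is genuinely difficult; the only subtle point is the torsion-freeness of $\Z^n/L$, which is exactly where the maximality hypothesis on $L$ is used.
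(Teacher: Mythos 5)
Your proposal is correct and follows essentially the same route as the paper: both pull back the translation action of $\Z^{n-t}\cong \Z^n/L$ on $\dbR^{n-t}$ along the quotient map to obtain an $(n-t)$-dimensional model for $E_{\sub(L)}\Z^n$. The paper states the isomorphism $\Z^n/L\cong\Z^{n-t}$ and the classifying-space property without proof, whereas you supply the (correct) details, in particular the use of maximality to rule out torsion in the quotient.
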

\begin{proof}
A model for $E_{\calF_0}(\Z^n/L)$ is a model for $E_{\sub(L)}\Z^n$  via the action given by the  projection $\Z^n\to \Z^n/L$. Since $\Z^n/L=\Z^{n-t}$, a  model for $E_{\calF_0}(\Z^n/L)$ is $\mathbb{R}^{n-t}$ with the action given by translation.
\end{proof}
\begin{lemma}\label{inclusion}
      Let $p,t,n\in \mathbb{N}$ such that $0\leq k\leq p< t\leq n$. Let $H$ be a subgroup of $\Z^n$  that is maximal in $\calF_t-\calF_{t-1}$, and let $S$ be maximal in  $\calF_{p}\cap H-  \calF_{p-1}\cap H$ (note that $S$ is a subgroup of $H$).  Then, we  can choose a  model  $X$  of $E_{\calF_k\cap S}\Z^n$ with $\dim(X)\leq n+k$, and a  model $Y$ of $E_{\calF_k\cap H}\Z^n$ with $\dim(Y)\leq n+k$  such that we have an inclusion $X\xhookrightarrow{} Y$. 
\end{lemma}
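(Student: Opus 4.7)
The plan is to induct on $k$. The base case $k=0$ is immediate, since both $\calF_0\cap S$ and $\calF_0\cap H$ reduce to the trivial subgroup; one takes $X=Y=\mathbb{R}^n$ with the translation action of $\Z^n$, and the required inclusion is the identity. For the inductive step, suppose one has already built a cellular inclusion $X'=E_{\calF_{k-1}\cap S}\Z^n\hookrightarrow E_{\calF_{k-1}\cap H}\Z^n=Y'$ with $\dim X',\dim Y'\le n+k-1$. I will extend it by applying the Lück-Weiermann construction of \cref{LW} in parallel to the nested pairs $\calF_{k-1}\cap S\subset\calF_k\cap S$ and $\calF_{k-1}\cap H\subset\calF_k\cap H$.

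Since $\Z^n$ is abelian, every commensurator $N_{\Z^n}[L]$ equals $\Z^n$, which drastically simplifies the push-out of \cref{LW}. By \cref{property:max:2} the representatives of commensurability classes may be taken to be the maximal (saturated) rank-$k$ subgroups; call these index sets $\calI_S$ and $\calI_H$. Applying \cref{property:max:2}(b) to $S\le\Z^n$ (which is valid since $S$ is itself saturated in $\Z^n$) shows that any saturated rank-$k$ subgroup of $S$ is already saturated in $\Z^n$, so $\calI_S=\{L\in\calI_H:L\le S\}\subseteq\calI_H$.

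For each $L\in\calI_H$, the saturation of $L$ in $\Z^n$ forces every rank-$k$ subgroup of $H$ commensurable with $L$ to lie inside $L$, so the auxiliary family satisfies $(\calF_k\cap H)[L]=\sub(L)\cup(\calF_{k-1}\cap H)$, and analogously $(\calF_k\cap S)[L]=\sub(L)\cup(\calF_{k-1}\cap S)$ when $L\in\calI_S$. I then invoke \cref{lemma:union:families} to assemble $E_{(\calF_k\cap H)[L]}\Z^n$ as a $\Z^n$-push-out built from a common model $Z_L=E_{\calF_{k-1}\cap L}\Z^n$ of dimension at most $n+k-1$ (via \cref{upper:bound:geometry:dimension} applied to $L$, noting $k-1<\rank L=k$), a common model $E_{\sub(L)}\Z^n$ of dimension at most $n-k$ (via \cref{lem:aux:gd:sub(L)}), and the space $Y'$; replacing $Y'$ by $X'$ produces $E_{(\calF_k\cap S)[L]}\Z^n$. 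The structural $\Z^n$-maps from $Z_L$ into $X'$, $Y'$, and $E_{\sub(L)}\Z^n$ exist and are unique up to $\Z^n$-homotopy by the universal property of classifying spaces, and hence are automatically compatible with the inductive inclusion $X'\hookrightarrow Y'$. After replacing these maps by their mapping cylinders as in \cref{dimension:LW} to turn them into cellular inclusions, the two push-outs form a pair, giving an inclusion $E_{(\calF_k\cap S)[L]}\Z^n\hookrightarrow E_{(\calF_k\cap H)[L]}\Z^n$ of dimension at most $n+k$ for every $L\in\calI_S$.

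Feeding these compatible pieces into the two Lück-Weiermann push-outs of \cref{LW}, now indexed by $\calI_S\subseteq\calI_H$, produces the desired models $X$ and $Y$; the terms indexed by $\calI_H\setminus\calI_S$ are attached only on the $Y$-side, and the bound $n+k$ on the dimension is preserved at each step. The main obstacle is the bookkeeping needed to turn every structural map appearing in the two layers of push-outs into a genuine cellular inclusion so that functoriality of push-outs actually yields an inclusion $X\hookrightarrow Y$ rather than merely a $\Z^n$-equivariant map; this is handled in the standard fashion via equivariant cellular approximation and iterated mapping cylinder replacements as in \cref{dimension:LW}, without introducing any extra dimension.
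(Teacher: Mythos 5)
Your proposal is correct and follows essentially the same route as the paper: induction on $k$, parallel Lück--Weiermann push-outs for the pairs $\calF_{k-1}\cap S\subset\calF_k\cap S$ and $\calF_{k-1}\cap H\subset\calF_k\cap H$ with maximal representatives so that $(\calF_k\cap\,\cdot\,)[L]=\sub(L)\cup(\calF_{k-1}\cap\,\cdot\,)$, identification of $\calI_S$ as a subset of $\calI_H$ via \cref{property:max:2}, and a corner-by-corner comparison of the two union push-outs of \cref{lemma:union:families}, which share the corners $E_{\calF_{k-1}\cap L}\Z^n$ and $E_{\sub(L)}\Z^n$ and differ only in the inductively included corner $X'\hookrightarrow Y'$, with the same dimension counts via \cref{upper:bound:geometry:dimension} and \cref{lem:aux:gd:sub(L)}. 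The only (cosmetic) divergence is that the paper additionally replaces all the $L$-indexed pieces by copies of a single $E_{\sub(T)\cup(\calF_{k-1}\cap\,\cdot\,)}\Z^n$ using automorphisms of $\Z^n$, whereas you keep each $L$ separate; both work.
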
 

\begin{proof}
The proof is by induction on $k$. Let  $G=\Z^n$. For $k=0$  we have $E_{\calF_0\cap S}G=EG$ and $E_{\calF_0\cap H}G=EG$. A model for $EG$ is $\mathbb{R}^n$ and the claim follows. Assuming the claim holds for all $k < m$, we prove that it holds for $k = m$, i.e. we  show that there is  a  model  $X$  of $E_{\calF_m\cap S}G$ with $\dim(X)\leq n+m$, and a  model $Y$ of $E_{\calF_m\cap H}G$ with $\dim(Y)\leq n+m$  such that we have a inclusion $X\xhookrightarrow{} Y$.  Let $\sim$ be the equivalence relation on $\calF_m\cap H-\calF_{m-1}\cap H$ defined by commensurability. Let $I_1$ be a complete set of representatives of  classes  of subgroups in $ (\calF_m\cap H -\calF_{m-1}\cap H)/\sim$. By \cref{property:max:2} these representatives can be chosen to be maximal within their class. Applying \cref{LW} and \cref{families}, the following homotopy $G$-push-out  gives us a model $X_1$ for $E_{\calF_m\cap H}G$ 
\begin{equation}\label{push-out:a:1}
\begin{tikzpicture}
  \matrix (m) [matrix of math nodes,row sep=3em,column sep=4em,minimum width=2em]
  {
     \displaystyle\bigsqcup_{L\in I_1} E_{\calF_{m-1}\cap H}G & E_{\calF_{m-1}\cap H}G \\
      \displaystyle\bigsqcup_{L\in I_1} E_{ (\calF_m\cap H)[L] }G & X_1 \\};
  \path[-stealth]
    (m-1-1) edge node [left] {$\displaystyle\bigsqcup_{L\in I_1}f_{L}$} (m-2-1) (m-1-1.east|-m-1-2) edge  node [above] {$\displaystyle\bigsqcup_{L\in I_1} id_{}$} (m-1-2)
    (m-2-1.east|-m-2-2) edge node [below] {} (m-2-2)
    (m-1-2) edge node [right] {} (m-2-2);
\end{tikzpicture}
\end{equation}
For $L\in I_1$, by maximality of $L$ in its commensuration class we can write the family 
$$(\calF_m\cap H)[L]=\{ K\leq G|K\in  \calF_m\cap H- \calF_{m-1}\cap H, K\sim L\}\cup (\calF_{m-1}\cap H)$$
as the union of two families 
$$(\calF_{m}\cap H)[L]=\sub(L)\cup (\calF_{m-1}\cap H),$$
where $\sub(L)$ is the family of all the subgroups of $L$.

On the other hand. Let $\sim$ be the equivalence relation on $\calF_m\cap S-\calF_{m-1}\cap S$ defined by commensurability. Let $I_2$ be a complete set of representatives of classes  of subgroups in $ (\calF_m\cap S -\calF_{m-1}\cap S)/\sim$. By \cref{property:max:2}, these representatives can be chosen to be maximal  within their class. Applying \cref{LW}, we obtain a homotopy $G$-push-out that gives us a model $X_2$ for $E_{\calF_m\cap S}G$
\begin{equation}\label{push-out:a:3}
\begin{tikzpicture}
  \matrix (m) [matrix of math nodes,row sep=3em,column sep=4em,minimum width=2em]
  {
\displaystyle\bigsqcup_{L\in I_2} E_{ \calF_{m-1}\cap S}G & E_{\calF_{m-1}\cap S}G \\
      \displaystyle\bigsqcup_{L\in I_2} E_{ \sub(L)\cup (\calF_{m-1}\cap S) }G & X_2 \\};
  \path[-stealth]
    (m-1-1) edge node [left] {} (m-2-1) (m-1-1.east|-m-1-2) edge  node [above] {} (m-1-2)
    (m-2-1.east|-m-2-2) edge node [below] {} (m-2-2)
    (m-1-2) edge node [right] {} (m-2-2);
\end{tikzpicture}
\end{equation}

Let $T \in I_2$. We claim that a model for $E_{\sub(T) \cup (\calF_{m-1} \cap H)}G$ is also a model for $E_{\sub(L) \cup (\calF_{m-1} \cap H)}G$ for every $L \in I_1$. Let $L \in I_1$. Note that $T$ and $L$ are maximal subgroups of $H$, thus $H = L \oplus N_1$ and $H = T \oplus N_2$. We can construct an automorphism of $H$, $\sigma\colon L \oplus N_1 \to T \oplus N_2$, that maps $L$ to $T$ isomorphically. Since $H$ is maximal in $G$, we can split $G$ as $G = H \oplus R$. Therefore, we can extend the automorphism $\sigma$ to an automorphism of $G$, $\Hat{\sigma}\colon L \oplus N_1 \oplus R \to T \oplus N_2 \oplus R$, that maps $L$ to $T$ isomorphically and preserves the subgroup $H$. It follows that $E_{\sub(T)\cup (\calF_{m-1}\cap H)}G$ is a model for $E_{\sub(L)\cup (\calF_{m-1}\cap H)}G$ via the action given by the automorphism $\Hat{\sigma}$. From \cref{property:max:2} it follows that  $I_1=I_2\sqcup (I_1-I_2)$. Therefore, we can replace the homotopy $G$-push-outs in \cref{push-out:a:1} and \cref{push-out:a:3} with the following homotopy $G$-push-outs.

\begin{equation}\label{push-out:a:4}
\begin{tikzpicture}
  \matrix (m) [matrix of math nodes,row sep=3em,column sep=4em,minimum width=2em]
  {
    \left(\displaystyle\bigsqcup_{L\in I_2}E_{\calF_{m-1}\cap H}G\right) \bigsqcup \left(\displaystyle\bigsqcup_{L\in I_1-I_2}  E_{\calF_{m-1}\cap H}G\right) & E_{\calF_{m-1}\cap H}G \\
    \left(\displaystyle\bigsqcup_{L\in I_2}  E_{ \sub(T)\cup ( \calF_{m-1}\cap H) }G\right) \bigsqcup \left(\displaystyle\bigsqcup_{L\in I_1-I_2}  E_{ \sub(T)\cup (\calF_{m-1}\cap H) }G \right)& X_1 \\};
  \path[-stealth]
    (m-1-1) edge node [left] {$\displaystyle\bigsqcup_{L\in I_2\sqcup (I_1-I_2)}f_{T}$} (m-2-1) (m-1-1.east|-m-1-2) edge  node [above] {$\displaystyle\bigsqcup_{L\in I_2\sqcup (I_1-I_2)} id_{}$} (m-1-2)
    (m-2-1.east|-m-2-2) edge node [below] {} (m-2-2)
    (m-1-2) edge node [right] {} (m-2-2);
\end{tikzpicture}
\end{equation}
\begin{equation}\label{push-out:a:3:2}
\begin{tikzpicture}
  \matrix (m) [matrix of math nodes,row sep=3em,column sep=4em,minimum width=2em]
  {
\displaystyle\bigsqcup_{L\in I_2} E_{ \calF_{m-1}\cap S}G & E_{\calF_{m-1}\cap S}G \\
      \displaystyle\bigsqcup_{L\in I_2} E_{ \sub(T)\cup (\calF_{m-1}\cap S) }G & X_2 \\};
  \path[-stealth]
    (m-1-1) edge node [left] {} (m-2-1) (m-1-1.east|-m-1-2) edge  node [above] {} (m-1-2)
    (m-2-1.east|-m-2-2) edge node [below] {} (m-2-2)
    (m-1-2) edge node [right] {} (m-2-2);
\end{tikzpicture}
\end{equation}

By induction hypothesis there is a model  $X$ of $E_{\calF_{m-1}\cap S}G$ with $\dim(X)\leq n+m-1$, and a  model $Y$ of $E_{\calF_{m-1}\cap H}G$ with $\dim(Y)\leq n+m-1$,  such that we have a inclusion $X\xhookrightarrow{} Y$. By the  $G$-push-outs in  \cref{push-out:a:4} and \cref{push-out:a:3:2},  to prove that there is a inclusion $E_{ \calF_{m}\cap S}\xhookrightarrow{}E_{ \calF_{m}\cap H}$ it is enough to prove that there is a inclusion $E_{\sub(T)\cup (\calF_{m-1}\cap S) }G \xhookrightarrow{}E_{\sub(T)\cup (\calF_{m-1}\cap H)}G$. By \cref{lemma:union:families} the following $G$-push-outs gives us a model for  $E_{\sub(T)\cup (\calF_{m-1}\cap S) }G$ and $E_{\sub(T)\cup (\calF_{m-1}\cap H) }G$ respectively.
\begin{equation}\label{union:families:inlcu}
\xymatrix{
 E_{\sub(T)\cap (\calF_{m-1}\cap S) }G \ar[r] \ar[d] & E_{\calF_{m-1}\cap S }G\ar[d]\\
E_{\sub(T)}G\ar[r] & Y_2
} \quad 
\xymatrix{
 E_{\sub(T)\cap (\calF_{m-1}\cap H) }G \ar[r] \ar[d] & E_{\calF_{m-1}\cap H }G\ar[d]\\
E_{\sub(T)}G\ar[r] & Y_1
}
\end{equation}
Note that $\sub(T)\cap (\calF_{m-1}\cap S)=\calF_{m-1}\cap T= \sub(T)\cap (\calF_{m-1}\cap H)$. It follows from these $G$-push-outs  that we have a inclusion $E_{\sub(T)\cup (\calF_{m-1}\cap S) }G \xhookrightarrow{}E_{\sub(T)\cup (\calF_{m-1}\cap H)}G$.

Finally, we prove that  $\dim(X_1)\leq n+m$ and $\dim(X_2)\leq n+m$. From \cref{push-out:a:4} it follows 
\[
\begin{split}
\dim(X_1) &\leq\max\{\gd_{ \calF_{m-1}\cap H}(G), \gd_{ \calF_{m-1}\cap H}(G)+1, \gd_{ \sub(T)\cup (\calF_{m-1}\cap H)}(G)\}\\
&\leq\max\{n+m, \gd_{ \sub(T)\cup (\calF_{m-1}\cap H)}(G)\}\text{, by induction hypothesis}
\end{split}\]
Then to prove that $\dim(X_1)\leq n+m$  it is enough to prove $\gd_{\sub(T)\cup(\calF_{m-1}\cap H)}(G)\leq n+m$.  By \cref{union:families:inlcu} and  since $\sub(T) \cap ( \calF_{m-1}\cap H)=\calF_{m-1}\cap T$ we have 
\[
\begin{split}
\gd_{ \sub(T)\cup(\calF_{m-1}\cap H)}(G) &\leq \dim(Y_1)\\
              &\leq\max\{\gd_{ \calF_{m-1}\cap H}(G), \gd_{\sub(T) \cap ( \calF_{m-1}\cap H)}(G)+1, \gd_{\sub(T)}(G)\}\\
&=\max\{\gd_{ \calF_{m-1}\cap H}(G), \gd_{\calF_{m-1}\cap T}(G)+1, \gd_{\sub(T)}(G)\}\\
              &\leq \max\{n+m-1, n+m, n-m\}\text{, By \cref{upper:bound:geometry:dimension} and \cref{lem:aux:gd:sub(L)}}\\
              &=n+m.
              \end{split}
              \] 
              \end{proof}

\begin{theorem}[The lower bound]\label{cd:lower:bound}
Let $m,t,n\in \mathbb{N}$ such that $0\leq m< t\leq n$. Let $H$ be a subgroup of $\Z^n$ that is maximal in $\calF_t-\calF_{t-1}$, then  $H_{ \calF_m\cap H}^{n+m}(\Z^n;\undZ)\neq 0$.
\end{theorem}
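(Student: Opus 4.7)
The approach is induction on $k$. The base case $k=0$ is immediate: since $H$ is torsion-free as a subgroup of $\Z^n$, we have $\calF_0 \cap H = \{1\}$, a model for $E_{\{1\}}\Z^n$ is $\R^n$ with the standard translation action, and $H^n_{\{1\}}(\Z^n;\undZ) = H^n(T^n;\Z) = \Z \neq 0$.

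For the inductive step I plan to work with two Mayer--Vietoris long exact sequences coming from the $G$-push-outs already constructed in the proof of \cref{inclusion}, with $G = \Z^n$. The outer one is the Lück--Weiermann decomposition of $E_{\calF_k \cap H}G$ over the nested families $\overline{\calF_{k-1}\cap H} \subseteq \overline{\calF_k\cap H}$, indexed by a complete set $I$ of commensurability-class representatives in $\calF_k \cap H - \calF_{k-1}\cap H$ chosen maximally via \cref{property:max:2}. For each $T \in I$, the space $E_{(\calF_k\cap H)[T]}G$ decomposes further via \cref{lemma:union:families} using $(\calF_k\cap H)[T] = \sub(T) \cup (\calF_{k-1}\cap H)$. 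Combining the identification of Bredon cohomology with constant coefficients $\undZ$ with ordinary cohomology of the orbit space, and the dimension bounds $\gd_{\calF_{k-1}\cap H}(\Z^n) \leq n+k-1$ from \cref{upper:bound:geometry:dimension} and $\gd_{\sub(T)}(\Z^n) \leq n-k$ from \cref{lem:aux:gd:sub(L)}, many terms in the two MV sequences will vanish in the relevant degrees.

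After these reductions the inner MV collapses to
\[
H^{n+k-1}_{\calF_{k-1}\cap H}(\Z^n) \xrightarrow{\rho_T} H^{n+k-1}_{\calF_{k-1}\cap T}(\Z^n) \to H^{n+k}_{(\calF_k\cap H)[T]}(\Z^n) \to 0,
\]
while the outer MV collapses to
\[
V \oplus \prod_T H^{n+k-1}_{(\calF_k\cap H)[T]}(\Z^n) \xrightarrow{\Phi} \prod_T V \to H^{n+k}_{\calF_k\cap H}(\Z^n) \to \prod_T H^{n+k}_{(\calF_k\cap H)[T]}(\Z^n) \to 0,
\]
where $V = H^{n+k-1}_{\calF_{k-1}\cap H}(\Z^n)$ and $\Phi(b,(c_T)) = (b - r_T(c_T))_T$. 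Applying \cref{inclusion} with parameters $(k-1,k,t)$ and $S = T$ yields a $G$-CW-inclusion $E_{\calF_{k-1}\cap T}\Z^n \hookrightarrow E_{\calF_{k-1}\cap H}\Z^n$ of models of dimension $\leq n+k-1$; the long exact sequence of this pair then forces $\rho_T$ to be surjective, since $H^{n+k}$ of the pair vanishes by dimension. Thus each $H^{n+k}_{(\calF_k\cap H)[T]}(\Z^n) = 0$ and $\Ima(r_T) = \ker(\rho_T)$, so the outer sequence identifies $H^{n+k}_{\calF_k\cap H}(\Z^n;\undZ)$ with the cokernel of the ``diagonal-restriction'' map $\bar\Phi\colon V \to \prod_T W_T$, $v \mapsto (\rho_T(v))_T$, where $W_T = H^{n+k-1}_{\calF_{k-1}\cap T}(\Z^n)$.

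The main obstacle will be verifying that $\bar\Phi$ is not surjective. The inductive hypothesis applied to each maximal $T$ (of rank $k$, with $k-1<k$) gives $W_T \neq 0$ for every $T \in I$, and the index set $I$ is infinite, being parameterized by the Grassmannian $\mathrm{Gr}(k,t)(\Q)$ for $1 \leq k < t$. The plan is to exhibit a specific element outside the image: fix $T_0 \in I$ and $0 \neq w_0 \in W_{T_0}$, and consider the tuple $(w_T)_{T\in I}$ with $w_{T_0} = w_0$ and $w_T = 0$ otherwise. Such a tuple lies in $\bar\Phi(V)$ only if some $v \in \bigcap_{T \neq T_0}\ker(\rho_T)$ satisfies $\rho_{T_0}(v) = w_0$; exploiting the ``overlap'' among maximal rank-$k$ subgroups of $H$ (any rank-$(k-1)$ subgroup of $H$ lies in infinitely many distinct such $T$), the vanishing constraints from $T \neq T_0$ are expected to force $v$ to restrict trivially to every rank-$(k-1)$ subgroup and thus to be trivial in $V$, yielding the contradiction. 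Hence $\bar\Phi$ is not surjective, the cokernel is nonzero, and $H^{n+k}_{\calF_k\cap H}(\Z^n;\undZ) \neq 0$, completing the induction.
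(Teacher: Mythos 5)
Your reduction follows the paper's architecture: the same Lück--Weiermann push-out for $E_{\calF_k\cap H}\Z^n$ with representatives chosen maximal via \cref{property:max:2}, the same decomposition $(\calF_k\cap H)[T]=\sub(T)\cup(\calF_{k-1}\cap H)$, and the same two Mayer--Vietoris sequences with the same dimension bounds killing the $\sub(T)$ and top-degree terms. Your observation that each restriction $\rho_T\colon V\to W_T$ is surjective (from \cref{inclusion} plus vanishing of $H^{n+k}$ of an $(n+k-1)$-dimensional CW pair), hence $H^{n+k}_{(\calF_k\cap H)[T]}(\Z^n;\undZ)=0$, is correct and is a small addition to the paper, which instead records that the other arrow $f_S^{*}$ in the same three-term exact sequence has cokernel $H^{n+k-1}_{\calF_{k-1}\cap S}(\Z^n;\undZ)\neq 0$; the two facts are compatible. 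Up to the identification $H^{n+k}_{\calF_k\cap H}(\Z^n;\undZ)\cong\mathrm{coker}(\bar\Phi)$ the argument is sound.

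The gap is the final step, which is the crux of the theorem. To miss the tuple supported at $T_0$ you need $\bigcap_{T\neq T_0}\ker\rho_T\subseteq\ker\rho_{T_0}$, and your justification --- that the constraints from $T\neq T_0$ are ``expected to force'' $v$ to be trivial --- is a heuristic, not a proof; a priori the kernels $\ker\rho_T$ are different subgroups of $V$ for different $T$ (the families $\calF_{k-1}\cap T$ genuinely differ once $k\geq 2$), and nothing you say rules out a class $v$ that dies on every $\calF_{k-1}\cap T$ with $T\neq T_0$ yet survives on $\calF_{k-1}\cap T_0$. The paper closes this step by a different device: using automorphisms of $\Z^n$ carrying one maximal rank-$k$ representative to another, it replaces every $E_{(\calF_k\cap H)[L]}G$ by a single fixed model $E_{\sub(S)\cup(\calF_{k-1}\cap H)}G$, so that every coordinate of the Mayer--Vietoris difference map $\varphi$ involves the \emph{same} homomorphism $f_S^{*}$ with the same image in $V$ (equivalently, all the kernels $\ker\rho_T$ coincide). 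Granting that, non-surjectivity is elementary: $f_S^{*}$ is not surjective because its cokernel is $H^{n+k-1}_{\calF_{k-1}\cap S}(\Z^n;\undZ)\neq 0$ by the inductive hypothesis (applied at $(t',k')=(k,k-1)$, using \cref{property:max:2} to see $S$ is maximal in $\calF_k-\calF_{k-1}$ --- note the paper's induction is a double induction on $(t,k)$ for exactly this reason), and then a tuple $(0,\dots,b_K,\dots,0)$ with $b_K\notin\Ima(f_S^{*})$ cannot equal $\varphi((a_L),c)$, since any coordinate $L\neq K$ forces $c=f_S^{*}(a_L)\in\Ima(f_S^{*})$ and hence $b_K=f_S^{*}(a_K)-c\in\Ima(f_S^{*})$. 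You need either this identification of the local models or an independent proof that the kernels are nested; as written, the non-surjectivity of $\bar\Phi$ is unproved.
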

\begin{proof}
Let $G=\Z^n$. The proof is by double induction on $(t,m)$. 
The claim is true  for all $(t,0)\in \mathbb{N}\times \{0\}$.  
Let $H$ be a subgroup of $G$ that is maximal in $\calF_t-\calF_{t-1}$, then $$H_{ \calF_0\cap H}^{n+0}(G;\undZ)=H_{\calF_0}^{n}(G;\undZ)=H^{n}(G;\Z)=\Z.$$ 
Suppose that  the claim is true for all $(t,s)\in \mathbb{N}\times \{0,1,\dots, m-1\}$, we prove that the claim is true for $(t,m)$, i.e. $H_{ \calF_m\cap H}^{n+m}(G;\undZ)\neq 0$.
 
Applying  Mayer-Vietoris to the $G$-push-out in \cref{push-out:a:4} and \cref{upper:bound:geometry:dimension}, we have the following long exact sequence
\begin{equation}\label{mv1}
    \begin{split}
        \cdots\to \left(\prod_{L\in \I_1}H^{n+m-1}(E_{\sub(T)\cup(\calF_{m-1}\cap H)}G/G)\right)\oplus H^{n+m-1}(E_{\calF_{m-1}\cap H}G/G)\xrightarrow[]{\varphi}& \\
        \prod_{L\in I_1}H^{n+m-1}(E_{\calF_{m-1}\cap H}G/G)\to H^{n+m}(X_1/G) \to \prod_{L\in \I_1}H^{n+m}(E_{\sub(T)\cup(\calF_{m-1}\cap H)}G/G)& \to 0    \end{split}
\end{equation}

We now show that $\prod_{L\in \I_1}H^{n+m}(E_{\sub(T)\cup(\calF_{m-1}\cap H)}G/G)=0$. It is enough to show that  $\gd_{\sub(T)\cup (\calF_{m-1}\cap H)}(G)\leq n+m-1$. 
By  \cref{lemma:union:families}  the following homotopy  $G$-push-out gives us a model $Y$ for $E_{\sub(T)\cup (\calF_{m-1}\cap H)}G$.

\begin{equation}\label{push-out:2}
\begin{tikzpicture}
  \matrix (m) [matrix of math nodes,row sep=3em,column sep=4em,minimum width=2em]
  {
      E_{\sub(T)\cap (\calF_{m-1}\cap H)}G  & E_{\calF_{m-1}\cap H}G\\
      E_{\sub(T)}G& Y\\};
  \path[-stealth]
    (m-1-1) edge node [left] {} (m-2-1) (m-1-1.east|-m-1-2) edge  node [above] {$g$} (m-1-2)
    (m-2-1.east|-m-2-2) edge node [below] {$h$} (m-2-2)
    (m-1-2) edge node [right] {$\psi$} (m-2-2);
\end{tikzpicture}
\end{equation}
Note that $\sub(T)\cap(\calF_{m-1}\cap H)= \calF_{m-1}\cap T$. By \cref{inclusion}, the map $g$ can be taken as an inclusion, then by \cite[Theorem 1.1]{waner} the  homotopy  $G$-push-out can be taken as a $G$-push-out.  It follows that 
\begin{equation}\label{bound:union:two:families}
\begin{split}
    \gd_{\sub(T)\cup (\calF_{m-1}\cap H)}(G)& \leq \dim(Y)\\
                      &=\max \{\gd_{\sub(T)}(G), \gd_{\calF_{m-1}\cap T}(G), \gd_{\calF_{m-1}\cap H}(G)\}\\
                      &\leq\max\{n-m,n+m-1,n+m-1\}\text{, by \cref{upper:bound:geometry:dimension} and \cref{lem:aux:gd:sub(L)}}\\
                      &=n+m-1
    \end{split}
\end{equation}

Then the sequence \cref{mv1} reduce to 

\begin{equation*}\label{mv2}
    \begin{split}
        \cdots\to \left(\prod_{L\in \I_1}H^{n+m-1}(E_{\sub(T)\cup(\calF_{m-1}\cap H)}G/G)\right)\oplus H^{n+m-1}(E_{\calF_{m-1}\cap H}G/G)\xrightarrow[]{\varphi}& \\
        \prod_{L\in I_1}H^{n+m-1}(E_{\calF_{m-1}\cap H}G/G)\to H^{n+m}(X_1/G) \to &  0    \end{split}
\end{equation*}

Then to prove that $H_{\calF_k\cap H}^{n+m}(G;\undZ)=H^{n+m}(X_1/G)\neq 0$ is enough  to prove that $\varphi$ is not surjective. By \cref{push-out:a:4}  we have $\varphi= (\prod_{L\in I_1}f_{T}^{*})-\Delta$, where $\Delta$ is the diagonal embedding. First, we prove that $f_{T}^{*}$ is not surjective.

Applying  Mayer-Vietoris to the $G$-push-out in \cref{push-out:2} we have the following long exact sequence 
\begin{equation*}
    \begin{split}
      \cdots \to H^{n+m-1}(E_{\sub(T)\cup (\calF_{m-1}\cap H)}G/G)\xrightarrow[]{h^{*}\oplus \psi^{*}}   H^{n+m-1}(E_{\sub(T)}G/G)\oplus H^{n+m-1}(E_{\calF_{m-1}\cap H}G/G)\to &\\
       H^{n+m-1}(E_{ \calF_{m-1}\cap T}G/G)\to 0 &\\
    \end{split}
\end{equation*}
Since $\gd_{\sub(T)}(G)\leq n-m$ and since there is precisely one $G$-map  $E_{\calF_{m-1}\cap H}G\to E_{\sub(T)\cup (\calF_{m-1}\cap H)}G$ up to $G$-homotopy we can reduce the  sequence to 
\begin{equation*}
    \begin{split}
      \cdots \to H^{n+m-1}(E_{\sub(T)\cup (\calF_{m-1}\cap H)}G/G)\xrightarrow[]{f_T^{*}}   H^{n+m-1}(E_{\calF_{m-1}\cap H}G/G)\to &\\
       H^{n+m-1}(E_{ \calF_{m-1}\cap T}G/G)\to 0 &\\
    \end{split}
\end{equation*}
By hypothesis $T$ is maximal in $ \calF_m\cap H-\calF_{m-1}\cap H$, then by \cref{property:max:2} $b)$ we have that $T$ is maximal in $\calF_m- \calF_{m-1}$,  by induction hypothesis we have that $H^{n+m-1}(E_{ \calF_{m-1}\cap T}G/G)\neq 0$, thus $f_T^{*}$ is not surjective. 

Finally, we see that $\varphi$ is not surjective. In fact, let $b_K\notin \Ima(f_{T}^{*})$, for some $K\in I_1$, then $(0,0, \cdots, b_K,\cdots,0)\notin \Ima(\varphi)$. Suppose that is not the case, i.e. there is  $$(\prod_{L\in \I_1}a_L,c)\in \left(\prod_{L\in \I_1}H^{n+m-1}(E_{\sub(T)\cup(\calF_{m-1}\cap H)}G/G)\right)\oplus H^{n+m-1}(E_{\calF_{m-1}\cap H}G/G)$$ such that  $(0,0, \cdots, b_K,\cdots,0)=\varphi((\prod_{L\in \I_1}a_L,c))=\prod_{L\in \I_1}f_{T}^{*}(a_L)-\Delta(c)=(f_{T}^{*}(a_L)-c)_{L\in I_1}.$
Then $f_{T}^{*}(a_L)=c$ for $L\neq K$ and $f_{T}^{*}(a_K)-c=b_K$, it follows that $$b_K=f_{T}^{*}(a_K)-f_{T}^{*}(a_L)=f_{T}^{*}(a_K-a_L),$$ then $b_K\in \Ima(f_{T}^{*})$ and this is a contradiction.
\end{proof}

\begin{proposition}\label{fk:dimension:Z}
Let $k,t,n\in \mathbb{N}$ such that $0\leq k< t\leq n$. Let $H$ be a subgroup of $\Z^n$ that is maximal in $\calF_t-\calF_{t-1}$. Let $\calF_k\cap H$ be the family that consists of all the subgroups of $H$ that belong to $\calF_k$. Then $\cd_{\calF_k\cap H}(\Z^n)=\gd_{\calF_k\cap H}(\Z^n)=n+k$.
\end{proposition}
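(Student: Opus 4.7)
The plan is to sandwich $\cd_{\calF_k\cap H}(\Z^n)$ and $\gd_{\calF_k\cap H}(\Z^n)$ between the same value $n+k$ using the two main technical results already established in this section, together with the general Eilenberg--Ganea-type comparison in \cref{geometric:cohomological:dimension}. There is really no new work to do: both bounds are in hand.

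First I would invoke \cref{upper:bound:geometry:dimension} (applied with the given $k$, $t$, $n$ and $H$, which has $\rank$ $t > k$): this yields the upper bound $\gd_{\calF_k\cap H}(\Z^n) \leq n+k$. Next, by \cref{geometric:cohomological:dimension}, we always have $\cd_{\calF_k\cap H}(\Z^n) \leq \gd_{\calF_k\cap H}(\Z^n)$, so the same upper bound holds on the cohomological side.

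For the matching lower bound, I would apply \cref{cd:lower:bound} with the very same hypotheses: it produces a nonzero Bredon cohomology class $H^{n+k}_{\calF_k\cap H}(\Z^n;\undZ)\neq 0$, and by the definition of $\cd_{\calF_k\cap H}$ this gives $\cd_{\calF_k\cap H}(\Z^n) \geq n+k$. Combining with the previous inequality $\cd \leq \gd \leq n+k$ forces the chain
\[
n+k \;\leq\; \cd_{\calF_k\cap H}(\Z^n) \;\leq\; \gd_{\calF_k\cap H}(\Z^n) \;\leq\; n+k,
\]
so equality holds throughout, proving $\cd_{\calF_k\cap H}(\Z^n) = \gd_{\calF_k\cap H}(\Z^n) = n+k$.

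Since all the heavy lifting (the inductive push-out construction giving the upper bound, and the delicate Mayer--Vietoris argument giving the lower bound via non-surjectivity of the restriction map $f_S^*$) has already been carried out in \cref{upper:bound:geometry:dimension} and \cref{cd:lower:bound}, there is no genuine obstacle here; the proposition is a direct corollary and the write-up should amount to a few lines assembling the three cited inequalities.
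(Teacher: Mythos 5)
Your proof is correct and follows exactly the route the paper takes: the upper bound from \cref{upper:bound:geometry:dimension} (noting $H$ has rank $t>k$), the lower bound from the nonvanishing class in \cref{cd:lower:bound}, and \cref{geometric:cohomological:dimension} to squeeze $\cd$ and $\gd$ together. Nothing further is needed.
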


\section{Some applications of \cref{subgroup:Z:lower:bound:1}}\label{sec:4}

\subsection{The $\calF_k$-dimension of braid groups }

In this subsection, we compute the $\calF_n$-dimension of full and pure braid groups. For our purposes, it is convenient to define the braid group as follows: let $D_n$ the closed disc with $n$ punctures,  we  define the {\it braid group $B_n$ on $n$ strands},  as the  isotopy classes of orientation preserving diffeomorphisms of $D_n$ that restrict to the identity on the boundary $\partial D_n$. We define the {\it pure braid group}, $P_n$, as the finite index subgroup of $B_n$ consisting of elements that  fixe point-wise the punctures.

\begin{theorem} \label{fk:dimension:Bn}
    Let  $k,n \in \N$ such that $0\leq k< n-1$ and let $G$ be either the braid group  $B_n$ or the pure braid group $P_n$. Then  $\gd_{\calF_k}(G)=\cd_{\calF_k}(G)=n+k-1$.
\end{theorem}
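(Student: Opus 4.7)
The plan is to sandwich $\gd_{\calF_k}(G)$ and $\cd_{\calF_k}(G)$ between matching upper and lower bounds, both of which are essentially already present in the literature; the theorem then follows by assembling the pieces.

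For the upper bound, I would invoke \cite[Theorem 1.4]{Rita:Porfirio:Luis}, which gives $\gd_{\calF_k}(B_n)\le n+k-1=\vcd(B_n)+k$ for $0\le k<n-1$. To transfer this bound to $P_n$, observe that $P_n\le B_n$ is a subgroup, so any model $X$ for $E_{\calF_k}B_n$ is automatically a $P_n$-CW-complex whose isotropy groups, viewed under the restricted action, are intersections of elements of $\calF_k$ with $P_n$ and hence still lie in $\calF_k$. Therefore $X$ is also a model for $E_{\calF_k}P_n$, giving $\gd_{\calF_k}(P_n)\le\gd_{\calF_k}(B_n)\le n+k-1$. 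Combined with the general inequality $\cd_{\calF_k}\le\gd_{\calF_k}$ from \cref{geometric:cohomological:dimension}, this proves the upper bound half of the theorem.

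For the lower bound, the key input is \cite[Proposition 3.7]{Ramon:Juan}, which provides a subgroup of $P_n$ isomorphic to $\Z^{n-1}$; since $P_n\le B_n$, the same subgroup sits inside $B_n$. With this virtually $\Z^{n-1}$ subgroup in hand, \cref{subgroup:Z:lower:bound:1} (applied with its parameter $n$ replaced by $n-1$) yields, for every $0\le k<n-1$, the inequalities
\[
\gd_{\calF_k}(G)\ge (n-1)+k=n+k-1 \qquad\text{and}\qquad \cd_{\calF_k}(G)\ge n+k-1.
\]

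Putting the two halves together we obtain
\[
n+k-1 \;\le\; \cd_{\calF_k}(G) \;\le\; \gd_{\calF_k}(G) \;\le\; n+k-1,
\]
so all three quantities coincide and equal $n+k-1$, as claimed. There is no real obstacle here: the whole argument is a bookkeeping exercise that records the upper bound from \cite{Rita:Porfirio:Luis}, extracts a maximal rank free abelian subgroup of $P_n$ from \cite{Ramon:Juan}, and feeds this subgroup into \cref{subgroup:Z:lower:bound:1} (which is itself the corollary of the new \cref{Virtually:dimension:Z}). The only nontrivial ingredient is the existence of a $\Z^{n-1}$ subgroup of $P_n$, and that is precisely what the cited proposition supplies.
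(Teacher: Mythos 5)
Your proposal is correct and follows essentially the same route as the paper: the upper bound comes from \cite[Theorem 1.4]{Rita:Porfirio:Luis} together with $\vcd(B_n)=n-1$, transferred to $P_n$ by monotonicity of $\gd_{\calF_k}$ under passing to subgroups, and the lower bound comes from the $\Z^{n-1}$ subgroup of $P_n$ supplied by \cite[Proposition 3.7]{Ramon:Juan} fed into \cref{subgroup:Z:lower:bound:1}. The only cosmetic difference is that you justify the transfer to $P_n$ by restricting a model for $E_{\calF_k}B_n$, whereas the paper invokes the finite index of $P_n$ in $B_n$; both amount to the same standard monotonicity fact.
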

\begin{proof}
It is enough to prove the following inequalities $$n+k-1\geq  \gd_{\calF_k}(G)\geq \cd_{\calF_k}(G)\geq n+k-1.$$
In \cite[Theorem 1.4]{Rita:Porfirio:Luis} was proved that $\gd_{\calF_k}(B_n)\leq \vcd(B_n)+k$ for all $0\leq k< n-1$. Since $P_n$ has finite index in $B_n$ also we have $\gd_{\calF_k}(P_n)\leq \vcd(P_n)+k$ for all $0\leq k< n-1$. On the other hand, it is well known that  $\vcd(B_n)=n-1$ see for example \cite[Section 3]{Arnold2014}. This proves the first inequality. The second inequality is by \cref{geometric:cohomological:dimension}.
In  \cite[Proposition 3.7]{Ramon:Juan}  it is shown that   $P_n$ has a subgroup isomorphic to $\Z^{n-1}$. Therefore, by monotonicity of the $\calF_k$-geometric dimension and \cref{subgroup:Z:lower:bound:1} we have $\cd_{\calF_k}(B_n)\geq \cd_{\calF_k}(P_n)\geq n+k-1$ for all  $0\leq k< n-1$. This proves the last inequality.
\end{proof}
For $k=1$, this theorem has been proved in \cite{Ramon:Juan}.

\subsection{The $\calF_k$-dimension of RAAGs and  their outer automorphism groups}\label{sec:4.2}

In this subsection, we compute the $\calF_n$-dimension of RAAGs and we give a lower bound for the $\calF_n$-geometric dimension of the outer automorphism group  of some RAAGs.

We recall some basic notions about RAAGs\textcolor{blue}{,} for further details see for instance \cite{MR2322545}.
Let $\Gamma$ be a finite simple graph, i.e. a finite graph without loops or multiple edges between vertices. We define the {\it right-angled Artin group} (RAAG) $A_{\Gamma}$ as the group generated by the vertices of $\Gamma$ with all the relations of the form $vw=wv$ 
 whenever $v$ and $w$ are joined by an edge. 
\subsubsection*{The Salvetti complex}
For the construction of the Salvetti complex we follow \cite[Subsection 3.6]{MR2322545}.
Let $A_\Gamma$ be a RAAG, its \emph{Salvetti complex} $S_\Gamma$ is a CW-complex that can be constructed as follows:

\begin{itemize}
    \item The $S_\Gamma^{(1)}$ skeleton is constructed as follows: we take a point $x_0$, and for each $v\in V(\Gamma)$, we attach a $1$-cell $I=[0,1]$ that identifies the endpoints of $I$ to $x_0$. Then, the $S_\Gamma^{(1)}$ skeleton is a wedge of circles.

    \item The $S_\Gamma^{(2)}$ skeleton  is constructed as follows. For each edge of $\Gamma$ we attach a $2$-cell $I\times I$ to $S_\Gamma^{(1)}$ by the boundary $\partial (I\times I)$ as $s_vs_w s_{v}^{-1}s_{w}^{-1}$.

    \item  In general the $S_\Gamma^{(n)}$ skeleton  is constructed as follows. For each complete subgraph $\Gamma'$ of $\Gamma$ with $|V(\Gamma')|=n$ we attach a $n$-cell $I^n$ to the $S_\Gamma^{(n-1)}$ skeleton using the generators $V(\Gamma')$.
\end{itemize}
\begin{remark}\label{subgroup:abelian:rank:vcd:salvetti}
 Note that, by the construction of the Salvetti complex $S_\Gamma$, its fundamental group is $A_\Gamma$. Additionally, $S_\Gamma$ has a $\dim(S_\Gamma)$-dimensional torus embedded in it, which follows from its construction. Therefore, the fundamental group $\pi_1(S_\Gamma,x_0)=A_\Gamma$ has a subgroup that is isomorphic to $\Z^{\dim(S_\Gamma)}$.
\end{remark}

\begin{theorem}\cite[Theorem 3.6]{MR2322545}\label{model:EG:RAAG}
The universal cover of the Salvetti complex, $\Tilde{S_\Gamma}$, is a $\cat(0)$ cube complex. In particular, $S_\Gamma$ is a $K(A_\Gamma, 1)$ space.
\end{theorem}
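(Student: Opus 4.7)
The plan is to apply Gromov's link condition for nonpositive curvature on cube complexes. Observe first that from the construction $S_\Gamma$ is naturally a cube complex: the $n$-cells are Euclidean cubes $I^n$ indexed by the complete subgraphs of $\Gamma$ on $n$ vertices, with attaching maps compatible with the cube-complex structure. Its universal cover $\Tilde{S}_\Gamma$ inherits this structure, and the covering map is a local isometry, so links in $\Tilde{S}_\Gamma$ are isomorphic to links in $S_\Gamma$. Since $\tilde{S}_\Gamma$ is simply connected by definition of universal cover, by Gromov's theorem it suffices to show that the link of every vertex is a flag simplicial complex. By the remark above, we need only verify this for the unique vertex $x_0$ of $S_\Gamma$.

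Next I would describe the link $\mathrm{lk}(x_0, S_\Gamma)$ explicitly. Each 1-cell associated with $v\in V(\Gamma)$ contributes two vertices $v^+, v^-$ corresponding to its two endpoints at $x_0$, and each $n$-cube $I^n$ contributes $2^n$ corners at $x_0$, each corner being a simplex on a set of the form $\{v_1^{\epsilon_1},\dots,v_n^{\epsilon_n}\}$ where $\{v_1,\dots,v_n\}$ spans a complete subgraph of $\Gamma$ and the signs $\epsilon_i\in\{+,-\}$ are arbitrary. In particular, a collection of link vertices $\{v_1^{\epsilon_1},\dots,v_k^{\epsilon_k}\}$ spans a simplex if and only if the $v_i$ are distinct and pairwise joined by edges of $\Gamma$. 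From this description the flag property is transparent: if the vertices are pairwise connected in the link, then every pair $v_i, v_j$ is joined by an edge in $\Gamma$, so $\{v_1,\dots,v_k\}$ spans a complete subgraph and hence the full $(k-1)$-simplex is present in the link.

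Combining these facts, $\Tilde{S}_\Gamma$ is a simply connected cube complex whose vertex links are flag, so by Gromov's criterion it is a $\cat(0)$ cube complex. Since $\cat(0)$ spaces are contractible (geodesic retraction to a basepoint provides an explicit homotopy to a point), $\Tilde{S}_\Gamma$ is contractible. Combined with van Kampen applied to $S_\Gamma$, which yields $\pi_1(S_\Gamma,x_0)\cong A_\Gamma$ because the 2-cells impose exactly the defining commutation relations, this shows $S_\Gamma$ is a $K(A_\Gamma,1)$.

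The main obstacle is the bookkeeping in the second step: correctly identifying the link as the simplicial complex whose simplices are indexed by pairs (complete subgraph of $\Gamma$, sign vector). Once this identification is in place, the flag property follows immediately from the defining property of the cell structure, and the rest of the argument is the standard Gromov/CAT(0) machinery.
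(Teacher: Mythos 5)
Your argument is correct and is essentially the standard proof of this cited result (the paper itself gives no proof, only the reference to Charney's survey, where the same Gromov link-condition argument appears): identify the link of the unique vertex as the ``doubled'' flag complex on $\{v^{+},v^{-}\}_{v\in V(\Gamma)}$, verify flagness, and apply Gromov's criterion together with Cartan--Hadamard and van Kampen. The only point worth making explicit in your link description is that $v^{+}$ and $v^{-}$ are never adjacent in the link, which is what guarantees that pairwise-adjacent link vertices have distinct underlying vertices of $\Gamma$.
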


\begin{corollary}
Let $G$ be a RAAG. Then $G$ is torsion-free.
\end{corollary}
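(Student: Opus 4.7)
The plan is to deduce torsion-freeness directly from \cref{model:EG:RAAG}. That theorem says the universal cover $\tilde{S}_\Gamma$ is a $\cat(0)$ cube complex and $S_\Gamma$ is a $K(A_\Gamma,1)$, so $A_\Gamma$ acts freely, properly, and by cubical isometries on the contractible $\cat(0)$ space $\tilde{S}_\Gamma$. Torsion-freeness will follow by the standard ``finite subgroups of $\cat(0)$ groups have fixed points'' principle.

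More precisely, the key step is to invoke the Bruhat--Tits (or Cartan) fixed point theorem: any finite group acting by isometries on a complete $\cat(0)$ space has a global fixed point (one applies the circumcenter construction to a finite orbit). I would apply this to an arbitrary finite subgroup $F \leq A_\Gamma$ acting on $\tilde{S}_\Gamma$. This yields a point $x \in \tilde{S}_\Gamma$ fixed by every element of $F$. But the deck transformation action of $A_\Gamma$ on $\tilde{S}_\Gamma$ is free, so the stabilizer of $x$ in $A_\Gamma$ is trivial, forcing $F = \{1\}$. Hence $A_\Gamma$ has no nontrivial finite subgroups, i.e.\ it is torsion-free.

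An alternative, perhaps even shorter, route is purely cohomological: since $S_\Gamma$ is a finite-dimensional $K(A_\Gamma,1)$, we have $\cd(A_\Gamma) \leq \dim(S_\Gamma) < \infty$; any group containing a nontrivial finite subgroup has infinite cohomological dimension (because finite cyclic groups have infinite $\cd$ over $\dbZ$), so $A_\Gamma$ must be torsion-free. I would probably present the $\cat(0)$ fixed-point argument, since it is in line with the geometric framing of \cref{model:EG:RAAG} and requires essentially no extra machinery beyond what the excerpt already invokes. There is no real obstacle here; the only thing to be careful about is ensuring completeness of $\tilde{S}_\Gamma$ (automatic, as it is a finite-dimensional locally finite cube complex with its intrinsic metric) so that the fixed-point theorem applies.
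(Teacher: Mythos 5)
Your argument is correct: the paper states this corollary without proof as an immediate consequence of \cref{model:EG:RAAG}, and both of your routes (the Bruhat--Tits fixed-point theorem applied to the free action on the complete $\cat(0)$ space $\tilde{S}_\Gamma$, or the observation that a finite-dimensional $K(A_\Gamma,1)$ forces $\cd(A_\Gamma)<\infty$ and hence torsion-freeness) are standard and valid derivations from exactly that theorem. No gaps.
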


\begin{lemma}\label{subgroup:RAAG:vcd} Let $A_\Gamma$ be a RAAG then $\gd(A_\Gamma)=\cd(A_\Gamma)=\dim(S_\Gamma)$. Moreover $$\cd(A_\Gamma)=\max\{n\in \mathbb{N}| \text{ there a complete subgraph $\Gamma'$ of $\Gamma$ with }  |V(\Gamma')|= n \}.$$ 
\end{lemma}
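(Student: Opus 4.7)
The plan is to assemble the equalities from a squeeze of the form
\[
\dim(S_\Gamma)\ \geq\ \gd(A_\Gamma)\ \geq\ \cd(A_\Gamma)\ \geq\ \dim(S_\Gamma),
\]
using the two structural inputs already stated in the excerpt, namely \cref{model:EG:RAAG} (which identifies $\widetilde{S}_\Gamma$ as a $\cat(0)$ cube complex and $S_\Gamma$ as a $K(A_\Gamma,1)$) and \cref{subgroup:abelian:rank:vcd:salvetti} (which embeds $\mathbb{Z}^{\dim(S_\Gamma)}$ in $A_\Gamma$ via the standard top-dimensional torus of $S_\Gamma$). All three quantities here refer to the family $\calF_0=\{1\}$ of the trivial subgroup, so this amounts to the classical geometric/cohomological dimension of the torsion-free group $A_\Gamma$.

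First I would establish the upper bound $\gd(A_\Gamma)\le \dim(S_\Gamma)$. By \cref{model:EG:RAAG}, $\widetilde{S}_\Gamma$ is contractible, and the free cellular $A_\Gamma$-action on $\widetilde{S}_\Gamma$ exhibits it as a model for $EA_\Gamma$. Its dimension equals $\dim(S_\Gamma)$, so $\gd(A_\Gamma)\le \dim(S_\Gamma)$. Combining with the general inequality $\cd\le\gd$ from \cref{geometric:cohomological:dimension}, this gives the two rightmost inequalities in the squeeze.

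Next I would prove the lower bound $\cd(A_\Gamma)\ge \dim(S_\Gamma)$. By \cref{subgroup:abelian:rank:vcd:salvetti}, $A_\Gamma$ contains a subgroup isomorphic to $\mathbb{Z}^{\dim(S_\Gamma)}$. Since $A_\Gamma$ is torsion-free, ordinary cohomological dimension is monotone under taking subgroups, so
\[
\cd(A_\Gamma)\ \geq\ \cd(\mathbb{Z}^{\dim(S_\Gamma)})\ =\ \dim(S_\Gamma).
\]
This closes the squeeze and gives $\gd(A_\Gamma)=\cd(A_\Gamma)=\dim(S_\Gamma)$.

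Finally, for the ``Moreover'' part, I would read off the dimension from the construction of the Salvetti complex recalled just above. By construction, for each $n\ge 1$ the $n$-cells of $S_\Gamma$ are in bijection with the complete subgraphs $\Gamma'\subseteq\Gamma$ having exactly $n$ vertices (with the convention that a single vertex gives a $1$-cell, a pair of adjacent vertices gives a $2$-cell, and so on). Hence
\[
\dim(S_\Gamma)\ =\ \max\{\,n\in\mathbb{N}\ :\ \text{there exists a complete subgraph }\Gamma'\subseteq\Gamma\text{ with }|V(\Gamma')|=n\,\},
\]
which combined with the equality $\cd(A_\Gamma)=\dim(S_\Gamma)$ yields the stated formula. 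The argument is essentially bookkeeping once the two cited theorems are in hand; the only place to be careful is invoking monotonicity of $\cd$ for subgroups, which is legitimate here because $A_\Gamma$ is torsion-free by the corollary following \cref{model:EG:RAAG}.
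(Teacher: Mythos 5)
Your proof is correct and follows the paper's own squeeze $\dim(S_\Gamma)\geq\gd(A_\Gamma)\geq\cd(A_\Gamma)\geq\dim(S_\Gamma)$ for the first two inequalities (contractibility of $\widetilde{S}_\Gamma$ from \cref{model:EG:RAAG}, then \cref{geometric:cohomological:dimension}). The one genuine divergence is the lower bound $\cd(A_\Gamma)\geq\dim(S_\Gamma)$: the paper obtains it by quoting the computation of the top cohomology of the Salvetti complex, namely that $H^{\dim(S_\Gamma)}(S_\Gamma)=H^{\dim(S_\Gamma)}(A_\Gamma)$ is free abelian on the top-dimensional cells (citing \cite[Subsection 3.7]{MR2322545}), whereas you use the embedded torus of \cref{subgroup:abelian:rank:vcd:salvetti} together with monotonicity of $\cd$ under passage to subgroups and $\cd(\Z^m)=m$. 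Both routes are valid; yours is more self-contained given that the Remark is already established in the paper and it matches the spirit of \cref{subgroup:Z:lower:bound:1}, while the paper's argument yields slightly more information by identifying the top cohomology group explicitly. (Monotonicity of $\cd$ under subgroups holds for arbitrary groups, so your aside about torsion-freeness is unnecessary, though harmless.) One small point on the ``moreover'' clause: the statement as printed counts complete subgraphs by $|E(\Gamma')|$, but the Salvetti construction attaches an $n$-cell for each complete subgraph with $n$ \emph{vertices}; your reading with $|V(\Gamma')|$ is the one consistent with the construction and with what the paper's proof actually uses, so the printed $|E(\Gamma')|$ appears to be a typo rather than a gap in your argument.
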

\begin{proof}
It is enough to prove the following inequalities $$\dim(S_\Gamma)\geq \gd(A_\Gamma)\geq \cd(A_\Gamma)\geq \dim(S_\Gamma).$$
The first inequality follows from \cref{model:EG:RAAG}. The second inequality follows from \cref{geometric:cohomological:dimension}. 
By \cite[Subsection 3.7]{MR2322545} $H^{\dim(S_\Gamma)}(S_\Gamma)=H^{\dim(S_\Gamma)}(A_\Gamma)$ is a free abelian generated by each $\dim(S_\Gamma)$-cell. The third inequality  follows. 

By construction of the Salvetti complex $S_\Gamma$ we have that $$\dim(S_\Gamma)=\max\{n\in \mathbb{N}| \text{ there a complete subgraph $\Gamma'$ of $\Gamma$ with }  |V(\Gamma')|= n \}.$$ 
Since $\cd(A_\Gamma)= \dim(S_\Gamma)$ the claim  follows.
\end{proof}

Let $G$ be a right-angled Artin group. In \cite[Corollary 1.2]{tomasz}, it was proved that $\cd_{\calF_k}(G)\leq\cd(G)+k+1$ for all $0\leq k<\cd(G)$. However, by following their proof in \cite[Proof of Theorem 3.1]{tomasz} and using \cite[Proposition 7.3]{HP20}, we can actually prove that $\cd_{\calF_k}(G)\leq\cd(G)+k$ for all $0\leq k<\cd(G)$. 
In \cite{tomasz} and \cite[Proposition 7.3]{HP20}, they work with the  $\calF_k$-cohomological dimension instead of $\calF_k$-geometric dimension, that  is the reason the following \cref{upper:bound:cd:RAAG} is stated in terms of $\calF_k$-cohomological dimension.

\begin{theorem}\label{upper:bound:cd:RAAG}
  Let $G$ be a RAAG. Then  $\cd_{\calF_k}(G)\leq\cd(G)+k$ for $k\in \mathbb{N}$.
\end{theorem}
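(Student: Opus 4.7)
The plan is to prove the bound by induction on $k$, following the strategy of \cite[Proof of Theorem 3.1]{tomasz} and refining the commensurator estimate via \cite[Proposition 7.3]{HP20}. The base case $k=0$ is immediate: since RAAGs are torsion-free (by the corollary following \cref{model:EG:RAAG}), the family $\calF_0$ consists only of the trivial subgroup, so $\cd_{\calF_0}(A_\Gamma)=\cd(A_\Gamma)$, which matches the desired bound.

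For the inductive step, I would assume $\cd_{\calF_{k-1}}(A_\Gamma)\leq \cd(A_\Gamma)+k-1$ and apply the Lück--Weiermann construction (\cref{LW}) with $H=G=A_\Gamma$ to the nested pair $\calF_{k-1}\subseteq \calF_k$, where $\sim$ is commensurability on $\calF_k\setminus\calF_{k-1}$. The cohomological analogue of \cref{lw:gd:upper:bound} (see \cref{cohomological:version}) then yields
\[
\cd_{\calF_k}(A_\Gamma)\;\leq\;\max\bigl\{\,\cd_{\calF_{k-1}}(A_\Gamma)+1,\;\cd_{(\calF_k)[L]}(N_{A_\Gamma}[L])\;:\;[L]\in I\,\bigr\},
\]
where $I$ runs over commensurability classes of virtually $\Z^k$ subgroups. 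The induction hypothesis makes the first term at most $\cd(A_\Gamma)+k$, so the entire task reduces to showing $\cd_{(\calF_k)[L]}(N_{A_\Gamma}[L])\leq \cd(A_\Gamma)+k$ for each virtually $\Z^k$ subgroup $L$.

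For this inequality I would invoke \cite[Proposition 7.3]{HP20}, applied to the cocompact $\cat(0)$ cubical action of $A_\Gamma$ on $\Tilde{S}_\Gamma$, which by \cref{model:EG:RAAG} and \cref{subgroup:RAAG:vcd} has dimension exactly $\cd(A_\Gamma)$. The proposition exploits the isometric splitting $\Min(L)\cong \mathbb{R}^k\times Y$ with $\dim(Y)\leq \cd(A_\Gamma)-k$, together with the fact that $N_{A_\Gamma}[L]$ preserves this decomposition and acts cocompactly on a suitable equivariant retract, to produce a model for $E_{(\calF_k)[L]}N_{A_\Gamma}[L]$ of dimension at most $\cd(A_\Gamma)+k$. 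This is precisely the improvement by one that distinguishes the present bound from the naive estimate $\cd(A_\Gamma)+k+1$ of \cite[Corollary 1.2]{tomasz}. Combining the two cases closes the induction.

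The main obstacle is verifying that \cite[Proposition 7.3]{HP20} applies in the exact form required here: one needs the statement for the full commensurator family $(\calF_k)[L]$, including the subfamily $\calF_{k-1}\cap N_{A_\Gamma}[L]$, not just for the subgroups commensurable with $L$. Concretely, one must check that the $\cat(0)$ minset construction, combined with the action of $N_{A_\Gamma}[L]$ on the transverse factor $Y$, really produces a $\calF_0$-classifying space on the relevant quotient of dimension exactly $\cd(A_\Gamma)-k$, and that this can be upgraded to a $(\calF_k)[L]$-classifying space of dimension $\cd(A_\Gamma)+k$ without additional cost. Once this step is in hand, the rest of the argument is a direct transcription of the inductive push-out scheme of \cite[Theorem 3.1]{tomasz}.
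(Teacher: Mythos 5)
Your skeleton matches the paper's: induction on $k$, the Lück--Weiermann reduction to bounding $\cd_{\calF_k[L]}(N_G[L])$ for each commensurability class $[L]$ of virtually $\Z^k$ subgroups, and \cite[Proposition 7.3]{HP20} as the input that saves the extra $+1$ over \cite[Corollary 1.2]{tomasz}. But the step you flag as ``the main obstacle'' is a genuine gap, and it does not resolve the way you hope: \cite[Proposition 7.3]{HP20} does \emph{not} produce a model for $E_{\calF_k[L]}N_G[L]$ of dimension $\cd(G)+k$. What it gives is only the bound $\cd_{\calG}(N_G[L])\leq \cd(G)-k$, where $\calG$ is the family generated by the subgroups of $N_G[L]$ commensurable with $L$ --- i.e.\ it handles precisely the part of $\calF_k[L]$ that you correctly worry it might not cover the complement of. The subfamily $\calF_{k-1}\cap N_G[L]$ and, crucially, the interaction between the two pieces must be handled separately.

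The paper closes this gap as follows. Write $\calF_k[L]=\calG\cup\calF_{k-1}$ and apply the cohomological version of the union-of-families estimate (\cref{upper:bound:gd:two:families}, via \cref{lemma:union:families}), which gives
\[
\cd_{\calF_k[L]}(N_G[L])\leq\max\bigl\{\cd_{\calG}(N_G[L]),\ \cd_{\calF_{k-1}}(N_G[L]),\ \cd_{\calG\cap\calF_{k-1}}(N_G[L])+1\bigr\}.
\]
The first term is $\leq\cd(G)-k$ by \cite[Proposition 7.3]{HP20}, the second is $\leq\cd(G)+k-1$ by the induction hypothesis, and the third is controlled by the nested-families estimate (\cref{nested:families:lw}) applied to $\calG\cap\calF_{k-1}\subseteq\calG$: for $K\in\calG$ either $K\in\calF_{k-1}$, in which case the restricted classifying space is a point, or $K$ is virtually $\Z^k$, in which case $(\calG\cap\calF_{k-1})\cap K=\calF_{k-1}\cap K$ and $\gd_{\calF_{k-1}\cap K}(K)\leq 2k-1$ by \cite[Proposition 1.3]{tomasz}; hence $\cd_{\calG\cap\calF_{k-1}}(N_G[L])\leq(\cd(G)-k)+(2k-1)=\cd(G)+k-1$. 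All three terms are then at most $\cd(G)+k$, which closes the induction. Without this decomposition your argument stalls exactly where you said it would, so the proposal as written is incomplete.
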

\begin{proof}
The proof is by induction on $k$. For $k=0$ it follows from \cref{subgroup:RAAG:vcd}. Suppose that the inequality is true for all $k<m$. We prove the inequality for $k=m$. 
Let $\sim$ be the equivalence relation on $\calF_m-\calF_{m-1}$ defined by commensurability, and let $I$ be a complete set of representatives of conjugacy classes in $(\calF_m-\calF_{m-1})/\sim$. Then by the cohomological version of  \cref{lw:gd:upper:bound} (see \cref{cohomological:version}) we have 
$$\cd_{\calF_{m}}(G)\leq \max\{ \cd_{ \calF_{m-1}}(G)+1, \cd_{ \calF_{m}[L]}(N_{G}[L])| L\in I\}\leq\max\{\cd(G)+m,\cd_{ \calF_{m}[L]}(N_{G}[L])| L\in I \}.$$
Then to prove that $\cd_{\calF_{m}}(G)\leq \cd(G)+m$ it is enough to prove that $\cd_{ \calF_{m}[L]}(N_{G}[L])\leq \cd(G)+m$ for all $L\in I$. Let $L\in I$, we can write the family 
\[\calF_m[L]=\{ K\leq N_G[L]|K\in  \calF_m- \calF_{m-1}, K\sim L\}\cup (\calF_{m-1}\cap N_{G}[L])\]
as the union of two families $ \calF_m[L]=\calG \cup (\calF_{m-1}\cap N_{G}[L])$ where $\calG$ is the family generated by $\{ K\leq N_G[L]|K\in  \calF_m- \calF_{m-1}, K\sim L\}$. By the cohomological version of \cref{upper:bound:gd:two:families} (see \cref{cohomological:version}) we have 
\[
\begin{split}
\cd_{\calF_m[L]}(N_{G}[L])&\leq\max\{ \cd_{\calG}(N_{G}[L]), \cd_{\calF_{m-1}\cap N_{G}[L]}(N_{G}[L]), \cd_{\calG\cap \calF_{m-1}}(N_{G}[L])+1 \}\\
           &\leq\max\{ \cd_{\calG}(N_{G}[L]), \cd(G)+m-1, \cd_{\calG\cap \calF_{m-1}}(N_{G}[L]) +1\}\\
\end{split}\]

We prove that 
\begin{enumerate}
    \item $\cd_{\calG}(N_{G}[L])\leq \cd(G)-m$
    \item$\cd_{\calG\cap \calF_{m-1}}(N_{G}[L])\leq \cd(G)+m-1$
\end{enumerate}
As a consequence we will have $\cd_{\calF_m[L]}(N_{G}[L])\leq \cd(G)+m$.  First, we prove item $(1)$. We define the family $\calF=\{K\leq N_{G}[L] \mid [K:K\cap L]<\infty\}$. We claim that \(\calF = \calG\). To show that \(\calG \subseteq \calF\), note that
\[
\{ K \leq N_G[L] \mid K \in \calF_m - \calF_{m-1}, K \sim L \} \subseteq \{ K \leq N_{G}[L] \mid [K : K \cap L] < \infty \} = \calF
\]
since, by definition, \(\calG\) is the smallest family that contains \(\{ K \leq N_G[L] \mid K \in \calF_m - \calF_{m-1}, K \sim L \}\), it follows that \(\calG \subseteq \calF\). Now let's prove the other inclusion \(\calF \subseteq \calG\). Let \(S \in \calF\), then \([S : S \cap L] < \infty\). Note that \([LS : L] = [S : S \cap L] < \infty\), it follows that \(LS\) is commensurable with \(L\), and as a consequence \(S \leq LS \in \calG\), in particular it follows that \(S \in \calG\). This proves the claim.
 Since $\calG=\calF$ we have by  \cite[Proposition 7.3 and Definition 7.2]{HP20} that $\cd_{\calG}(N_{G}[L])\leq \cd(G)-m$. 
 
 We now prove the item $(2)$. Applying  the cohomological version of \cref{nested:families:lw} (see \cref{cohomological:version}) to the inclusion of families $(\calG\cap  \calF_{m-1}) \subset \calG$ we get
\[\cd_{\calG \cap \calF_{m-1}}(N_{G}[L])\leq \cd_\calG(N_{G}[L])+d\]
for some $d$ such that for any $K\in \calG$ we have $\cd_{(\calG\cap \calF_{m-1})\cap K}(K)\leq d$.
Since we already proved $\cd_{\calG}(N_{G}[L])\leq \cd(G)-m$, our next task is to show that $d$ can be chosen to be equal to $2m-1$.

Recall that any $K\in \calG$ is virtually $\Z^t$ for some $0\leq t \leq m$. We split our proof into two cases. First assume that $K\in \calG$ is virtually $\Z^t$ for some $0\leq t \leq m-1$. Hence $K$ belongs to $ \calF_{m-1}$, it follows that $K$ belongs to $\calG \cap \calF_{m-1}$ and we conclude $\cd_{\calG \cap \calF_{m-1}\cap K}(K)=0$. Now assume $K\in \calG$ is virtually $\Z^{m}$. We claim that $(\calG \cap  \calF_{m-1})\cap K= \calF_{m-1}\cap K$. The inclusion $(\calG \cap \calF_{m-1})\cap K \subset \calF_{m-1}\cap K$ is clear. For the other inclusion let $M\in \calF_{m-1}\cap K$. Since  $M\leq K\in \calG$, therefore $M\in (\calG \cap  \calF_{m-1})\cap K$. This establishes the claim. We conclude that \[\cd_{(\calG \cap  \calF_{m-1})\cap K}(K)=\cd_{\calF_{m-1}\cap  K}(K)\leq m+m-1=2m-1\] where the inequality follows from \cite[Proposition 1.3]{tomasz}. 
\end{proof}

\mycomment{
\begin{theorem}
  Let $G$ be a RAAG. Then  $gd_{\calF_k}(G)\leq\cd(G)+k$ for $k\in \mathbb{N}$.
\end{theorem}
\begin{proof}
The proof is by induction on $k$. For $k=0$ it follows from \cref{subgroup:RAAG:vcd}. Suppose that the inequality is true for all $k<m$. We prove the inequality for $k=m$. 
Let $\sim$ be the equivalence relation on $\calF_m-\calF_{m-1}$ defined by commensurability, and let $I$ a complete set of representatives conjugacy classes in $(\calF_m-\calF_{m-1})/\sim$. Then by \cref{LW} the following homotopy  $G$-push-out gives us a model $X$ for $E_{\calF_{m}}G$.
    \[
\begin{tikzpicture}
  \matrix (m) [matrix of math nodes,row sep=3em,column sep=4em,minimum width=2em]
  {
     \displaystyle\bigsqcup_{L\in I} G\times_{N_G[L]}E_{\calF_{m-1}\cap N_G[L]}N_G[L] & E_{\calF_{m-1}}G \\
      \displaystyle\bigsqcup_{L\in I} G\times_{N_{G}[L]}E_{ \calF_m[L] }N_G[L] & X \\};
  \path[-stealth]
    (m-1-1) edge node [left] {$\displaystyle\bigsqcup_{L\in I}id_{G}\times_{N_G[L]}f_{[L]}$} (m-2-1) (m-1-1.east|-m-1-2) edge  node [above] {} (m-1-2)
    (m-2-1.east|-m-2-2) edge node [below] {} (m-2-2)
    (m-1-2) edge node [right] {} (m-2-2);
\end{tikzpicture}
\]
It follows 
\[
\begin{split}
\gd_{\calF_m}(G)&\leq\dim(X)\\
              &=\max\{\gd_{ \calF_{m-1}}(G), \gd_{ \calF_{m-1}\cap N_G[L]}(N_G[L])+1, \gd_{ \calF_m[L]}(N_G[L])| L\in I\}\text{, by \cref{dimension:LW}}\\
             &=\max\{\gd_{ \calF_{m-1}}(G), \gd_{ \calF_{m-1}}(G)+1, \gd_{ \calF_m[L]}(N_G[L])| L\in I\}\\
              &=\max\{ \gd_{\calF_{m-1}}(G)+1, \gd_{ \calF_m[L]}(N_G[L])| L\in I\}.\\
              &=\max\{ \cd(G)+m, \gd_{ \calF_m[L]}(G)| L\in I\},\text{by induction hypothesis}.\end{split}
              \]
Then to prove that $\gd_{ \calF_m}(G)\leq \cd(G)+m$ it is enough to prove that $\gd_{\calF_m[L]}(N_G[L])\leq \cd(G)+m$ for all $L\in I$. Let $L\in I$ and let $H\in I$ as in \cref{normalizer:conmesurator:RAAG}, i.e. $H$ is commensurable with $L$ and $N_G[L]=N_G(H)$ we change $L$ by $H$.  We can write the family  $$\calF_m[H]=\{ K\leq N_G(H)|K\in  \calF_m- \calF_{m-1}, [K]=[H]\}\cup (\calF_{m-1}\cap N_G(H))$$  as the union of two families $ \calF_m[H]=\calG \cup (\calF_{m-1}\cap N_G(H))$ where $\calG$ is the family generated by $\{ K\leq N_G(H)|K\in  \calF_m-\calF_{m-1}, [K]=[H]\}$. By \cref{lemma:union:families} the following homotopy $N_G(H)$-push-out gives us a model $Y$ for $E_{ (\calF_m\cap H)[L]}N_G(H)$ 

\[
\xymatrix{
E_{\calG\cap ( \calF_{m-1}\cap N_G(H))}N_G(H) \ar[r] \ar[d] & E_{\calF_{m-1}\cap N_G(H)}N_G(H)\ar[d]\\
E_{\calG}N_G(H) \ar[r] & Y
}
\]

It follows 
\[
\begin{split}
\gd_{\calF_m[H]}(N_G(H))&\leq\dim(Y)\\
              &=\max\{\gd_{ \calF_{m-1}\cap N_G(H)}(N_G(H)), \gd_{\calG \cap ( \calF_{m-1}\cap N_G(H))}(N_G(H))+1, \gd_{\calG}(N_G(H))\}\\
              &\leq \max\{\gd_{ \calF_{m-1}}(G), \gd_{\calG \cap ( \calF_{m-1}\cap N_G(H))}(N_G(H))+1, \gd_{\calG}(N_G(H))\}\\
              &=\max\{\cd(G)+m-1, \gd_{\calG \cap ( \calF_{m-1}\cap H)}(N_G(H))+1, \gd_{\calG}(N_G(H))\},\text{by induction hypothesis}.\end{split}
              \]
              
We prove the following inequalities 
\begin{enumerate}[i)]
    \item $\gd_{\calG}(N_G(H))\leq \cd(G)-m$,
    \item $\gd_{\calG \cap (\calF_{m-1}\cap N_G(H))}(N_G(H))\leq \cd(G)+m-1 $
\end{enumerate}
As a consequence, we will have $\gd_{\calF_m[H]}(N_G(H))\leq \cd(G)+m$.
\end{proof}
}
\begin{theorem}\label{virtually:abelian:RAAG}
Let $G$ be a right-angled Artin group.  Then for $0 \leq k < \cd(G)$ we have
$ \cd_{\calF_k}(G)=\cd(G)+k$.
\end{theorem}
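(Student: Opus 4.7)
The plan is a sandwich argument: combine the upper bound just proved in \cref{upper:bound:cd:RAAG} with a lower bound coming from a free abelian subgroup of maximal rank sitting inside $G$. Since both estimates match, the theorem will follow directly.

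First I would invoke \cref{upper:bound:cd:RAAG} to obtain $\cd_{\calF_k}(G)\leq \cd(G)+k$ for every $k\in\mathbb{N}$; no additional work is needed on this side. For the lower bound, the key observation is \cref{subgroup:abelian:rank:vcd:salvetti}, which says that the Salvetti complex $S_\Gamma$ contains an embedded $\dim(S_\Gamma)$-dimensional torus, so $A_\Gamma=\pi_1(S_\Gamma)$ contains a subgroup isomorphic to $\mathbb{Z}^{\dim(S_\Gamma)}$. By \cref{subgroup:RAAG:vcd}, $\dim(S_\Gamma)=\cd(A_\Gamma)=\cd(G)$, so $G$ contains a copy of $\mathbb{Z}^{\cd(G)}$.

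Next I would apply \cref{subgroup:Z:lower:bound:1} (the cohomological version included in the statement) with $n=\cd(G)$: for every $0\leq k<\cd(G)$ we get
\[
\cd_{\calF_k}(G)\geq n+k=\cd(G)+k.
\]
Combining this with the upper bound gives the desired equality $\cd_{\calF_k}(G)=\cd(G)+k$.

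There is essentially no obstacle, because all the real work was done in \cref{cd:lower:bound} (the Mayer--Vietoris computation for $\mathbb{Z}^n$) and in \cref{upper:bound:cd:RAAG} (the Lück--Weiermann induction refined via \cite[Proposition 7.3]{HP20}). The only subtlety worth highlighting in the write-up is the identification $\cd(G)=\dim(S_\Gamma)$ together with the existence of the embedded torus of that dimension, so that one may legitimately apply \cref{subgroup:Z:lower:bound:1}; once that point is made, the proof is a one-line sandwich. If one also wants the geometric-dimension statement from the introduction, a final sentence invoking \cref{geometric:cohomological:dimension} (and the fact that $\cd_{\calF_k}(G)\geq\cd(G)\geq 3$ in the nontrivial range) upgrades the cohomological equality to $\gd_{\calF_k}(G)=\cd(G)+k$.
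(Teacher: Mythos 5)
Your proposal is correct and follows essentially the same route as the paper: the upper bound from \cref{upper:bound:cd:RAAG} combined with the existence of a $\mathbb{Z}^{\cd(G)}$ subgroup (via \cref{subgroup:abelian:rank:vcd:salvetti} and \cref{subgroup:RAAG:vcd}) and the lower bound of \cref{subgroup:Z:lower:bound:1}. The paper's proof is exactly this sandwich, and your closing remark about upgrading to geometric dimension via \cref{geometric:cohomological:dimension} matches the paper's separate \cref{virtually:abelian:RAAG:gd}.
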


\begin{proof}
 By  \cref{upper:bound:cd:RAAG} we have $\cd_{\calF_k}(G)\leq\cd(G)+k$. On the other hand, by \cref{subgroup:RAAG:vcd}
$G$ has a subgroup isomorphic to $\Z^{\cd(G)}$, then the claim it follows from \cref{subgroup:Z:lower:bound:1}.
\end{proof}
\begin{theorem}\label{virtually:abelian:RAAG:gd}
Let $G$ be a right-angled Artin group.  Then for $0 \leq k < \cd(G)$ we have
$ \gd_{\calF_k}(G)= \cd_{\calF_k}(G)$.
\end{theorem}
\begin{proof}
If $k=0$ the claim  follows from \cref{subgroup:RAAG:vcd}. Suppose that $k\geq1$, hence by hypothesis, $\cd(G)\geq 2$. By \cref{virtually:abelian:RAAG} we have $\cd_{\calF_k}(G)\geq 3$, then by \cref{geometric:cohomological:dimension}, $\gd_{\calF_k}(G)= \cd_{\calF_k}(G)$.
\end{proof}

Given a fixed right-angled Artin group $A_{\Gamma}$,  we denote by $\Aut(A_\Gamma)$ the group of automorphisms of $A_\Gamma$ and  by $\Inn(A_\Gamma)$ the subgroup consisting of inner automorphisms. The outer automorphism group of  $A_\Gamma$ is defined as the quotient $\Out(A_\Gamma)=\Aut(A_\Gamma)/\Inn(A_\Gamma)$. If  $S\subseteq V(\Gamma)$ then the subgroup $H$ generated by $S$ is called a special subgroup of $A_\Gamma$. It  can be proven that, in fact, $H$ is the  right-angled Artin group $A_S$ associated with the full subgraph induced by $S$ in  $\Gamma$.

If $\Delta$ is a full subgraph of $\Gamma$, we denote by $A_{\Delta}$  the special subgroup generated by the vertices contained in $\Delta$. An outer automorphism $F$ of $A_\Gamma$ preserves $A_\Delta$ if there exists a representative $f\in F$ that restricts to an automorphism of $A_\Delta$. An outer automorphism $F$ acts trivially on $A_{\Delta}$ if there exists representative $f\in F$ that acts as the identity on $A_\Delta$. 

\begin{definition}
Let $\calG$, $\calH$ be two  collections of special subgroups of $A_\Gamma$. The relative outer automorphism group $\Out(A_{\Gamma};\calG,\calH^t)$ consists of automorphisms that preserve each $A_\Delta \in \calG$ and act trivially on each $A_\Delta\in \calH.$
\end{definition}

\begin{proposition}
    Let $A_{\Gamma}=A_{\Delta_1}*A_{\Delta_1}*\cdots*A_{\Delta_k}*F_n$ be a free factor decomposition of a right-angled Artin group with $k\geq 1$. Then $\gd_{\calF_k}(\Out(A_{\Gamma};\{A_{\Delta_i}\}^{t}))\geq  \vcd(\Out(A_{\Gamma};\{A_{\Delta_i}\}^{t}))+k$ for all $0\leq k<\vcd(\Out(A_{\Gamma};\{A_{\Delta_i}\}^{t})).$  
\end{proposition}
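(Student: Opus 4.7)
The plan is to reduce the statement directly to \cref{subgroup:Z:lower:bound:1}. Once we produce a virtually $\mathbb{Z}^{d}$ subgroup of $\Out(A_{\Gamma};\{A_{\Delta_i}\}^{t})$ with $d=\vcd(\Out(A_{\Gamma};\{A_{\Delta_i}\}^{t}))$, \cref{subgroup:Z:lower:bound:1} yields
\[
\gd_{\calF_k}(\Out(A_{\Gamma};\{A_{\Delta_i}\}^{t}))\;\geq\; d+k
\]
for every $0\leq k<d$, which is the desired inequality.

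To build such a subgroup, I would first recall the deformation space (generalized outer space) $\mathcal{O}(A_{\Gamma};\{A_{\Delta_i}\})$ associated to the free factor decomposition $A_{\Gamma}=A_{\Delta_1}*\cdots*A_{\Delta_k}*F_n$, on which $\Out(A_{\Gamma};\{A_{\Delta_i}\}^{t})$ acts properly with finite stabilizers. Its spine is a cocompact model for $\underline{E}\Out(A_{\Gamma};\{A_{\Delta_i}\}^{t})$, and its dimension computes $\vcd(\Out(A_{\Gamma};\{A_{\Delta_i}\}^{t}))=d$ by the work of Guirardel--Levitt on outer spaces for free products (generalizing the Culler--Vogtmann computation for $\Out(F_n)$, where $d=2n-3$).

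The key input is that this maximal dimension is realized by a Dehn twist subgroup: choose a maximal reduced graph of groups $Y$ in the deformation space (one with $d$ edges where Dehn twists are supported), and consider the subgroup generated by the Dehn twists along the edges of $Y$. Because $Y$ is a graph of groups with the $A_{\Delta_i}$'s and vertices of $F_n$ as vertex groups and trivial edge groups at the free-splitting edges, the Dehn twists along distinct edges commute, each has infinite order in the outer automorphism group, and together they freely generate a free abelian subgroup of rank $d$. Up to passing to a finite-index subgroup (to kill possible torsion ambiguities coming from stabilizers of $Y$), this yields the required virtually $\mathbb{Z}^{d}$ subgroup.

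The main obstacle is verifying in this generality that the Dehn-twist subgroup has rank exactly $d$ and sits inside the relative outer automorphism group (i.e., that each such twist acts trivially on every $A_{\Delta_i}$). This is where one must invoke the precise description of the deformation space for $(A_{\Gamma};\{A_{\Delta_i}\})$ and the classification of its maximally-degenerate simplices; once this identification is in hand the counting of edges in $Y$ matches $\vcd$ and the proposition follows from \cref{subgroup:Z:lower:bound:1}. The special cases proved in \cref{gd:out:fn} and \cref{gd:diamonds} follow exactly this pattern, with the Dehn twist subgroups of rank $2n-3$ and $4d-1$, respectively.
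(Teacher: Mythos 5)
Your reduction is exactly the paper's: exhibit a free abelian subgroup of $\Out(A_{\Gamma};\{A_{\Delta_i}\}^{t})$ of rank $d=\vcd(\Out(A_{\Gamma};\{A_{\Delta_i}\}^{t}))$ and apply \cref{subgroup:Z:lower:bound:1}. The one step you flag as the "main obstacle" --- that a free abelian subgroup of rank exactly $d$ exists inside the relative outer automorphism group --- is precisely what the paper imports as a black box (Theorem A of the cited reference on relative automorphism groups of RAAGs), so your Dehn-twist construction in the deformation space, while in the right spirit, does not need to be carried out and is the only part of your argument left unverified.
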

\begin{proof}
    By \cite[Theorem A]{MR4224528} $\Out(A_{\Gamma};\{A_{\Delta_i}\}^{t})$ has a free abelian subgroup of $\rank$ equal to  $\vcd(\Out(A_{\Gamma};\{A_{\Delta_i}\}^{t}))$. The inequality follows from \cref{subgroup:Z:lower:bound:1}.
\end{proof}
Let $F_n$ be the free group in $n$ generators. The group $F_n$ can be seen as the RAAG associated with the graph that has $n$ vertices and no edges. In \cite{MR830040} was proved that  $\vcd(\Out(F_n))=2n-3$ for $n\geq 2$ and  that $\Out(F_n)$ has a subgroup ismorphic to $\Z^{\vcd(\Out(F_n))}$. From \cref{subgroup:Z:lower:bound:1} we get the following 
\begin{proposition}
    Let $n\geq 2$. Let $F_n$ be the free group in $n$ generators. Then $\gd_{\calF_k}(\Out(F_n))\geq 2n+k-3$ for all $0\leq k< 2n-3$. 
\end{proposition}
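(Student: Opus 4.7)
The plan is to derive the lower bound as a direct application of \cref{subgroup:Z:lower:bound:1} to the group $G = \Out(F_n)$, once we have exhibited a free abelian subgroup of $\Out(F_n)$ of sufficiently large rank.

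First I would recall the content of \cite{MR830040}: for every $n\geq 2$, the virtual cohomological dimension of $\Out(F_n)$ equals $2n-3$, and this value is realized geometrically, so in particular $\Out(F_n)$ contains a subgroup isomorphic to $\Z^{2n-3}$ (one may take, for instance, a maximal collection of commuting Dehn-twist-type automorphisms supported on a rose with $n$ petals). This is the only input from outside the present paper that is needed.

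Next, setting $N=2n-3$ and applying \cref{subgroup:Z:lower:bound:1} to the group $G=\Out(F_n)$, which contains a (virtually) $\Z^{N}$ subgroup, we obtain
\[
\gd_{\calF_k}(\Out(F_n))\geq N+k = 2n+k-3
\]
for every $0\leq k<N = 2n-3$. This is exactly the asserted inequality, and simultaneously gives the intermediate bound $\vcd(\Out(F_n))+k = 2n+k-3$ mentioned implicitly in the statement.

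There is essentially no obstacle: once the existence of the $\Z^{2n-3}$ subgroup is in hand, the argument is a one-line invocation of \cref{subgroup:Z:lower:bound:1}, completely parallel to the proof of the preceding proposition on $\Out(A_d)$. The only subtlety worth double-checking is that the rank of the abelian subgroup matches the virtual cohomological dimension, so that the range $0\leq k<2n-3$ is exactly the range in which \cref{subgroup:Z:lower:bound:1} produces the claimed bound.
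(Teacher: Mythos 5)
Your proof is correct and follows exactly the paper's argument: cite \cite{MR830040} for the facts that $\vcd(\Out(F_n))=2n-3$ and that $\Out(F_n)$ contains a subgroup isomorphic to $\Z^{2n-3}$, then apply \cref{subgroup:Z:lower:bound:1}. No further comment is needed.
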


Let $A_d$ be the right-angled Artin group given by a string of $d$ diamonds. In \cite[Proposition 6.5]{MR4072157} was proved that $\vcd(Out(A_d))=4d-1$ and $Out(A_d)$ has a subgroup  isomorphic to $\Z^{\vcd(\Out(A_d))}$, from \cref{subgroup:Z:lower:bound:1} we have 
\begin{proposition}
  Let $A_d$ be the right-angled Artin group given by a string of $d$ diamonds. Then $\gd_{\calF_k}(\Out(A_d))\geq 4d +k-1$ for all $0\leq k< 4d-1$. 
\end{proposition}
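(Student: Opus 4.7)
The plan is to mimic the argument used in the preceding proposition about $\Out(F_n)$: locate a free abelian subgroup of $\Out(A_d)$ of the largest possible rank, and then feed this subgroup into the general lower bound \cref{subgroup:Z:lower:bound:1}. The whole argument is a one-line application once the two ingredients are assembled.

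First, I would invoke \cite[Proposition 6.5]{MR4072157}, which supplies the two facts about the string of $d$ diamonds that we need: on the one hand, $\vcd(\Out(A_d)) = 4d-1$, and on the other, $\Out(A_d)$ contains a subgroup isomorphic to $\Z^{4d-1}$ (a free abelian subgroup whose rank realizes the virtual cohomological dimension). These are precisely the same two properties that were exploited for $\Out(F_n)$ via \cite{MR830040}.

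With a copy of $\Z^{4d-1}$ inside $\Out(A_d)$ at our disposal, I would then apply \cref{subgroup:Z:lower:bound:1} with $n = 4d-1$: for any group $G$ having a virtually $\Z^n$ subgroup one has $\gd_{\calF_k}(G) \geq n + k$ for every $0 \leq k < n$. Substituting $n = 4d-1$ yields
\[
\gd_{\calF_k}(\Out(A_d)) \;\geq\; (4d-1) + k \;=\; 4d + k - 1
\]
on the full advertised range $0 \leq k < 4d-1$, which matches exactly the hypothesis $k < n$ in \cref{subgroup:Z:lower:bound:1}.

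There is no genuine obstacle here, since both inputs are black-boxed: the existence of the maximal-rank free abelian subgroup is external (from \cite{MR4072157}), and the passage from such a subgroup to a lower bound on $\gd_{\calF_k}$ is the content of \cref{subgroup:Z:lower:bound:1}. The only mild sanity check is that the range of $k$ in the statement agrees with the range in the corollary, and it does.
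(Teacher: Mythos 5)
Your proposal is correct and coincides with the paper's own argument: both cite \cite[Proposition 6.5]{MR4072157} for the existence of a $\Z^{4d-1}$ subgroup of $\Out(A_d)$ realizing $\vcd(\Out(A_d))=4d-1$, and then apply \cref{subgroup:Z:lower:bound:1} with $n=4d-1$. Nothing further is needed.
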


\subsection{The $\calF_k$-geometric dimension for graphs of groups of finitely generated virtually abelian groups.}\label{sec:5}
The objective of this section is to explicitly calculate the $\calF_n$-geometric dimension of the fundamental group of a graph of groups whose vertex groups are finitely generated virtually abelian groups, and whose edge groups are finite groups.
\subsubsection*{Bass-Serre theory}
 We recall some basic notions about Bass-Serre theory, for further details see \cite{Se03}. A \emph{graph of groups} $\mathbf{Y}$ consists of  a graph $Y$, a group $Y_v$ for each $v\in V(Y)$, and a group $Y_e$ for each $e=\{v,w\}\in E(Y)$, together with  monomorphisms  $\varphi\colon Y_e \to Y_{i}$ $i=v,w$. 

Given a graph of groups $\mathbf{Y}$, one of the classic theorems of Bass-Serre theory provides the existence of a group $G=\pi_1(\mathbf{Y})$, called the \emph{fundamental group of the graph of groups} $\mathbf{Y}$ and the tree $T$(a graph with no cycles), called the \emph{Bass-Serre tree} of $\mathbf{Y}$, such that $G$ acts on $T$ without inversions, and the induced graph of groups is isomorphic to $\mathbf{Y}$. The identification $G=\pi_1(\mathbf{Y})$ is called a splitting of $G$.
\begin{definition}
Let $Y$ be a graph of groups with fundamental group $G$. The splitting  $G=\pi_1(Y)$ is \emph{acylindrical} if there is an integer  $k$ such that, for every path  $\gamma$ of length  $k$ in the Bass-Serre tree $T$ of  $Y$,  the stabilizer of $\gamma$ is finite. 
\end{definition}
Recall a \emph{geodesic line} of a simplicial tree $T$, is a simplicial embedding of $\mathbb{R}$ in $T$, where $\mathbb{R}$ has as vertex set $\dbZ$ and an edge joining any two consecutive integers.

\begin{theorem}\cite[Theorem 6.3]{MR4472883}\label{building:model:apply:haefliger}
Let $Y$ be a graph of groups with finitely generated fundamental group  $G$   and Bass-Serre tree  $T$. Consider the collection  $\mathcal{A}$ of all the geodesics of   $T$  that admit a  co-compact action of an infinite  virtually  cyclic subgroup of $G$. Then the space $\widetilde{T}$ given by the following  homotopy $G$-push-out

\begin{equation*}
\xymatrix{\displaystyle\bigsqcup_{\gamma\in \mathcal{A}}\gamma\ar[d] \ar[r]  & T\ar[d]\\
\displaystyle\bigsqcup_{\gamma\in \mathcal{A}} \{*_{\gamma}\}  \ar[r] & \widetilde{T} 
}
\end{equation*}
is a model $\widetilde{T}$ for $E_{\iso_{G}(\widetilde{T})}G$ where  $\iso_{G}(\widetilde{T})$ is the family generated by the isotropy groups of $\widetilde{T}$, i.e. by coning-off on $T$ the geodesics in $\mathcal{A}$  we obtain a model for  $E_{\iso_{G}(\widetilde{T})}G$. Moreover,  if  the splitting  $G=\pi_1(Y)$ is acylindrical, then the family  $\iso_{G}(\widetilde{T})$ contains the family  $\calF_n$ of $G$ for all $n\ge 0$.
\end{theorem}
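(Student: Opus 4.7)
The plan is to verify that $\widetilde{T}$ satisfies the two defining properties of a model for $E_{\iso_G(\widetilde{T})}G$: that it is a $G$-CW-complex whose isotropy groups all lie in $\iso_G(\widetilde{T})$ (which holds by construction of the push-out), and that for every $H\in \iso_G(\widetilde{T})$ the fixed-point set $\widetilde{T}^H$ is non-empty and contractible. I would start by recalling from Bass--Serre theory that $T$ is already a cocompact model for $E_{\mathcal{V}}G$, where $\mathcal{V}$ is the family of subgroups of $G$ contained in some vertex stabilizer; in particular $T^H$ is a non-empty subtree of $T$ whenever $H\in\mathcal{V}$. The push-out glues, $G$-equivariantly, a cone on each geodesic $\gamma\in\mathcal{A}$, so the new isotropy groups are precisely the stabilizers $\stab_G(\gamma)$ of the cone points $*_\gamma$, and $\iso_G(\widetilde{T})$ is generated by $\mathcal{V}$ together with $\{\stab_G(\gamma):\gamma\in\mathcal{A}\}$.

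The main technical step is to analyze $\widetilde{T}^H$. Since $H$ acts cellularly and taking fixed points commutes with the homotopy push-out, $\widetilde{T}^H$ is itself the homotopy push-out of $T^H$ with $\bigsqcup_{\gamma}\{*_\gamma\}$ glued along $\bigsqcup_\gamma(\gamma\cap T^H)$, where $\gamma$ ranges over the $H$-invariant geodesics in $\mathcal{A}$. Since $T$ is a tree, $T^H$ is a (possibly empty) subtree and each $\gamma\cap T^H$ is a convex subtree of $T^H$. If $H$ is contained in a vertex stabilizer then $T^H\neq\emptyset$, and attaching cones on convex subtrees of $T^H$ preserves contractibility (the attachments assemble in a tree-like pattern), giving $\widetilde{T}^H$ contractible. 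If $H$ lies in some $\stab_G(\gamma_0)$ but fixes no vertex of $T$, then $T^H=\emptyset$ but the cone point $*_{\gamma_0}$ is fixed, and one checks inductively that $\widetilde{T}^H$ deformation retracts onto $*_{\gamma_0}$.

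Finally, for the acylindricity statement I would show that every infinite virtually $\mathbb{Z}^n$ subgroup $H\le G$ either fixes a vertex of $T$ or preserves a geodesic in $\mathcal{A}$. Fix a characteristic finite-index subgroup $A\le H$ with $A\cong\mathbb{Z}^n$. If some $h\in A$ acts hyperbolically on $T$ with axis $\ell$, commutativity forces every element of $A$ to preserve $\ell$, and a conjugation argument using normality of $A$ in $H$ extends this to all of $H$; since $\langle h\rangle$ acts cocompactly on $\ell$, we have $\ell\in\mathcal{A}$. Otherwise every element of $A$ acts elliptically; since pairwise commuting elliptic tree-isometries share a fixed point, Serre's fixed-point theorem gives a global $A$-fixed vertex, and the finite group $H/A$ acting on the subtree $T^A$ then fixes a vertex, so $H$ does too. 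Acylindricity enters to guarantee that the pointwise stabilizer of $\ell$ (the kernel of the $A$-action on it) is finite and therefore trivial in $\mathbb{Z}^n$, which keeps the hyperbolic/elliptic dichotomy clean. The main obstacle I expect is the combinatorial verification of contractibility of $\widetilde{T}^H$ when several $H$-invariant geodesics contribute simultaneously, together with the careful use of acylindricity to preclude mixed hyperbolic/elliptic behaviour inside higher-rank abelian subgroups.
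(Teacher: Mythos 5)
The paper does not prove this statement at all: it is imported verbatim as \cite[Theorem 6.3]{MR4472883}, so there is no internal proof to compare yours against. Judged on its own, your outline is essentially the standard (and correct) argument: reduce to contractibility of $\widetilde{T}^H$ for $H$ in the family generated by the isotropy groups, compute $\widetilde{T}^H$ as the push-out of the $H$-fixed sets, and for the ``moreover'' run the elliptic/hyperbolic dichotomy on a normal free abelian subgroup $A\le H$ of finite index.

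Two points need to be nailed down. First, when $H$ fixes a vertex of $T$ and also preserves some $\gamma\in\mathcal{A}$, your contractibility claim silently requires $\gamma\cap T^{H}\neq\emptyset$; if this intersection were empty the cone point $*_{\gamma}$ would be an isolated component of $\widetilde{T}^{H}$ and contractibility would fail. This is true but needs the observation that an isometry fixing a vertex $v$ and preserving the line $\gamma$ must fix the nearest-point projection of $v$ to $\gamma$. Likewise, in the case $T^{H}=\emptyset$ you should note that a subgroup of $\stab_G(\gamma_0)$ with no fixed vertex must contain an element translating along $\gamma_0$ (a subgroup of the simplicial isometries of $\mathbb{R}$ containing no translation has a fixed point), and that a hyperbolic element preserves no line other than its axis; then $\widetilde{T}^{H}$ is literally the single point $*_{\gamma_0}$ and no induction is needed. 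Second, your explanation of where acylindricity enters is off: the dichotomy argument you give (every infinite virtually $\Z^{n}$ subgroup either fixes a vertex or preserves the common axis $\ell$ of its hyperbolic elements, and $\ell\in\mathcal{A}$ because $\langle h\rangle$ already acts cocompactly on it) does not use acylindricity anywhere, and the finiteness of the kernel of $A\curvearrowright\ell$ plays no role in it. In this paper acylindricity is really exploited afterwards, in \cref{bass:serre:gd}, to guarantee that the stabilizers of the cone points (and of the cells of the attached cones) are virtually cyclic; you should either locate a genuine use of the hypothesis in your argument or note explicitly that your proof of the containment $\calF_n\subseteq\iso_G(\widetilde{T})$ does not require it.
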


The following theorem is mild generalization of \cite[Proposition 7.4]{MR4472883}. We include a proof for the sake of completeness.

\begin{theorem}\label{bass:serre:gd}
Let $Y$ be a graph of groups with finitely generated fundamental group $G$ and Bass-Serre tree $T$. Suppose that the splitting of  $G$ is acylindrical. Then for all $k\geq 1$ we have 

\[ \max \{ \gd_{\calF_k\cap G_v}(G_v), \gd_{\calF_k\cap G_e}(G_e)| v\in V(Y), e\in E(Y)\} \leq\gd_{\calF_k}(G)\]
and 
\[\gd_{\calF_k}(G)\leq \max \{ 2, \gd_{\calF_k\cap G_v}(G_v),  \gd_{\calF_k\cap G_e}(G_e)+1| v\in V(Y), e\in E(Y)\}\]
\end{theorem}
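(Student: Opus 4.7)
My plan is to prove the two inequalities separately. The lower bound is an immediate consequence of monotonicity of the $\calF_k$-geometric dimension under taking subgroups: for any subgroup $H\le G$, any model $X$ for $E_{\calF_k}G$ restricts to an $H$-CW-complex whose isotropy lies in $\calF_k\cap H$ and whose fixed-point sets for subgroups in $\calF_k\cap H$ are contractible (since these subgroups are already in $\calF_k$). Hence $X$ is a model for $E_{\calF_k\cap H}H$, so $\gd_{\calF_k\cap H}(H)\le\gd_{\calF_k}(G)$. Applying this to each vertex group $G_v$ and each edge group $G_e$ and taking the maximum yields the desired lower bound.

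For the upper bound I would build a concrete model for $E_{\calF_k}G$ of controlled dimension. By \cref{building:model:apply:haefliger}, coning off all geodesics $\gamma\in\mathcal A$ in the Bass--Serre tree $T$ (i.e.~those admitting a cocompact action by an infinite virtually cyclic subgroup of $G$) produces a 2-dimensional $G$-CW-complex $\widetilde T$ which is a model for $E_{\iso_G(\widetilde T)}G$; moreover, acylindricity of the splitting guarantees that $\calF_k\subseteq\iso_G(\widetilde T)$ for all $k\ge 0$. The plan is then to feed $\widetilde T$ into \cref{prop:haefliger}, which gives
\[
\gd_{\calF_k}(G)\le\max\bigl\{\gd_{\calF_k\cap G_\sigma}(G_\sigma)+\dim(\sigma)\bigm| \sigma\text{ a cell of }\widetilde T\bigr\}.
\]

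Finally, I would go through the cells of $\widetilde T$ and bound each contribution. The cells come in five types: (i) vertices of $T$, with stabilizer conjugate to some $G_v$, contributing $\gd_{\calF_k\cap G_v}(G_v)$; (ii) edges of $T$, with stabilizer conjugate to some $G_e$, contributing $\gd_{\calF_k\cap G_e}(G_e)+1$; (iii) the new cone vertices $*_\gamma$, whose stabilizer is $\stab(\gamma)$, an infinite virtually cyclic group, hence already in $\calF_k$ since $k\ge 1$, contributing $0$; (iv) the new 1-cells joining $*_\gamma$ to vertices of $\gamma$, with stabilizer $\stab(\gamma)\cap G_{v}$ for some vertex $v$ of $T$, contributing $1$; and (v) the new 2-cells (triangles built on consecutive edges of $\gamma$ and the cone point), contributing $2$. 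The key observation---the main technical point of the argument---is that the stabilizers in (iv) and (v) are finite: $\stab(\gamma)$ is virtually cyclic acting cocompactly on the line $\gamma$, so its stabilizer of any vertex or edge of $\gamma$ is finite (and in fact acylindricity ensures this for longer segments). Thus these stabilizers lie in $\calF_k$ and the corresponding geometric dimensions vanish, so the contributions in (iv), (v) are just $1$ and $2$ respectively. Taking the maximum of all five bounds gives exactly
\[
\gd_{\calF_k}(G)\le\max\bigl\{2,\ \gd_{\calF_k\cap G_v}(G_v),\ \gd_{\calF_k\cap G_e}(G_e)+1\bigm| v\in V(Y),\ e\in E(Y)\bigr\},
\]
completing the proof.
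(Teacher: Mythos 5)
Your proposal is correct and follows essentially the same route as the paper: the lower bound via restriction of a model for $E_{\calF_k}G$ to the vertex and edge subgroups, and the upper bound by feeding the coned-off tree $\widetilde T$ from \cref{building:model:apply:haefliger} into \cref{prop:haefliger} and bounding the contribution of each cell type. The only cosmetic difference is that you argue the stabilizers of the new cells are finite, while the paper only needs (and only claims) that they are virtually cyclic, which already places them in $\calF_k$ for $k\ge 1$.
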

\begin{proof}
 For each $s\in V(Y)\cup E(Y) $ we have that $G_s$ is a subgroup of $G$, then the first inequality follows. 
 Now we prove the second inequality.  The splitting of $G$ is acylindrical, then we can use \cref{building:model:apply:haefliger} to obtain a 2-dimensional space $\widetilde{T}$ that is obtained from $T$ coning-off some geodesics of $T$, see \cref{tree}
 \begin{figure}[h]
    \centering
 \includegraphics[width=6.3in]{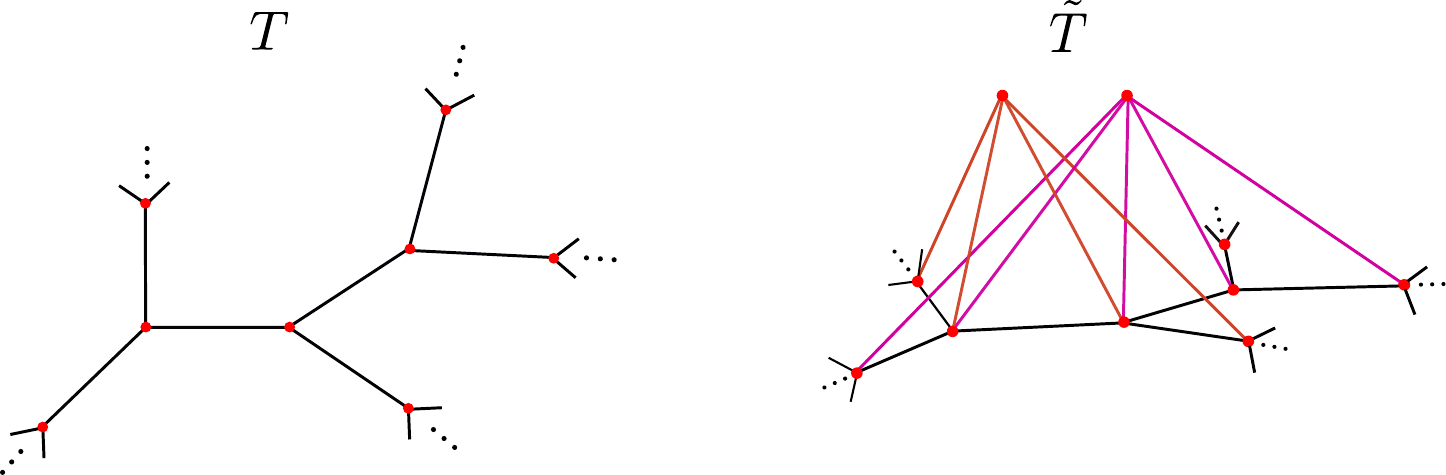} 
    \caption{\small Promoting $T$ to $\Tilde{T}$.}
    \label{tree}
\end{figure}
 , the space $\widetilde{T}$ is a model for $E_{\iso_{G}(\widetilde{T})}G$ and $\calF_k\subseteq \iso_{G}(\widetilde{T})$.  By \cref{prop:haefliger} we have \[\gd_{\calF_k}(G)\leq \max\{\gd_{\calF_k\cap G_\sigma}(G_{\sigma})+\dim(\sigma)| \ 
\sigma \text{ is a cell of } \ \widetilde{T} \}.\]

Let $\sigma$ be  a cell of $\widetilde{T}$, we compute $\gd_{\calF_k\cap G_\sigma}(G_{\sigma})+\dim(\sigma)$.
\begin{itemize}
    \item If $\sigma$ is 0-cell we have two cases $\sigma\in T$ or $\sigma\in \widetilde{T}-T$, in the first case we have $G_\sigma=G_v$ for some $v\in V(Y)$, in the other case we have $G_\sigma$ is virtually cyclic, then $\gd_{\calF_k\cap G_\sigma}(G_{\sigma})+\dim(\sigma)= \gd_{\calF_k\cap G_v}(G_v)$ or $0$.
    \item If $\sigma$ is 1-cell we have two cases $\sigma\in T$ or $\sigma$ has a vertex in $\widetilde{T}-T$, in the first case we have $G_\sigma=G_e$ for some $e\in E(Y)$, in the other case we have $G_\sigma$ is virtually cyclic, then $\gd_{\calF_k\cap G_\sigma}(G_{\sigma})+\dim(\sigma)= \gd_{\calF_k\cap G_e}(G_e)+1$ or $1$.
    \item If $\sigma$ is 2-cell, then $\sigma$ has a vertex in $\widetilde{T}-T$, then  $G_\sigma$ is virtually cyclic, it follows that $\gd_{\calF_k\cap G_\sigma}(G_{\sigma})+\dim(\sigma)= 2$.
\end{itemize}
 The inequality follows.
\end{proof}

\begin{proposition}
    Let $Y$ be a finite graph of groups such that for each  $v\in V(Y)$ the group $G_v$ is infinite finitely generated virtually abelian, with $\rank(G_e)<\rank(G_v)$. Suppose that the splitting of  $G=\pi_1(Y)$ is acylindrical. Let  $m=\max\{\rank(G_v)| v\in V(Y) \}$. Then for $1\leq k<m$ we have $\gd_{\calF_k}(G)=m+k.$ 
\end{proposition}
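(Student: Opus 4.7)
The plan is to apply \cref{bass:serre:gd} to sandwich $\gd_{\calF_k}(G)$ between $m+k$ and $m+k$. Since the splitting is acylindrical, that theorem yields both the upper bound
\[\gd_{\calF_k}(G)\leq \max \{ 2,\ \gd_{\calF_k\cap G_v}(G_v),\ \gd_{\calF_k\cap G_e}(G_e)+1 \mid v\in V(Y), e\in E(Y)\}\]
and the lower bound
\[\gd_{\calF_k}(G)\geq \max \{ \gd_{\calF_k\cap G_v}(G_v),\ \gd_{\calF_k\cap G_e}(G_e)\mid v\in V(Y), e\in E(Y)\}.\]
Because each $G_v$ and each $G_e$ is finitely generated virtually abelian, and for a virtually abelian subgroup $H$ of $G$ the family $\calF_k\cap H$ coincides with the intrinsic family $\calF_k(H)$, \cref{Virtually:dimension:Z} applies directly.

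For the upper bound I would compute each term of the maximum. For a vertex $v$ with $\rank(G_v)=n_v$: if $k<n_v$, then $\gd_{\calF_k\cap G_v}(G_v)=n_v+k\leq m+k$ by \cref{Virtually:dimension:Z}; if $k\geq n_v$, then $G_v\in \calF_k\cap G_v$ and $\gd_{\calF_k\cap G_v}(G_v)=0$. For an edge $e$, the hypothesis $\rank(G_e)<\rank(G_v)$ for each incident $v$ (and $\rank(G_v)\leq m$) gives $\rank(G_e)\leq m-1$, so if $k<\rank(G_e)$ we obtain $\gd_{\calF_k\cap G_e}(G_e)+1=\rank(G_e)+k+1\leq m+k$, and otherwise the contribution is at most $1$. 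Since $k\geq 1$ and $m\geq k+1\geq 2$ force $m+k\geq 3>2$, the maximum is exactly $m+k$, so $\gd_{\calF_k}(G)\leq m+k$.

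For the lower bound, let $v_0\in V(Y)$ realize $\rank(G_{v_0})=m$. Then $G_{v_0}$ is a virtually $\Z^m$ subgroup of $G$, so \cref{subgroup:Z:lower:bound:1} immediately yields $\gd_{\calF_k}(G)\geq m+k$ for $0\leq k<m$. Combining the two inequalities gives $\gd_{\calF_k}(G)=m+k$, as desired. I do not expect any serious obstacle: the only care point is verifying that $\rank(G_e)+k+1$ cannot exceed the vertex contribution $m+k$, which is guaranteed precisely by the strict inequality $\rank(G_e)<\rank(G_v)$ assumed in the hypothesis.
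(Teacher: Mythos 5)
Your proposal is correct and follows essentially the same route as the paper: both bounds come from \cref{bass:serre:gd}, the vertex and edge contributions are evaluated via \cref{Virtually:dimension:Z} and \cref{subgroup:Z:lower:bound:1}, and the strict inequality $\rank(G_e)<\rank(G_v)$ together with $k\geq 1$ kills the $+1$ from the edge terms. Your case split for $k\geq n_v$ is a small extra care point the paper glosses over, but the argument is the same.
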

\begin{proof}
First, we prove that $\gd_{\calF_k}(G)\geq m+k$. The splitting of $G$ is acylindrical, then by \cref{bass:serre:gd} we have 
\[
\begin{split}
        \gd_{\calF_k}(G) & \geq \max \{ \gd_{\calF_k\cap G_v}(G_v), \gd_{\calF_k\cap G_e}(G_e)| v\in V(Y), e\in E(Y)\}\\
        & \geq \max \{ \rank(G_v)+k, \rank(G_e)+k| v\in V(Y), e\in E(Y)\}, \text{ from \cref{subgroup:Z:lower:bound:1}}\\
        &= \max \{ \rank(G_v)+k| v\in V(Y)\}, \text{$\rank(G_e)\leq \rank(G_v)$}\\
        &=m+k.
    \end{split}
\]
Also by \cref{bass:serre:gd} we have 
\[
\begin{split}
\gd_{\calF_k}(G)&\leq \max \{ 2, \gd_{\calF_k\cap G_v}(G_v),  \gd_{\calF_k\cap G_e}(G_e)+1| v\in V(Y), e\in E(Y)\}\\
&= \max \{ 2, \rank(G_v)+k, \rank(G_e)+k+1 | v\in V(Y), e\in E(Y)\}, \text{ from \cref{Virtually:dimension:Z}}\\
&= \max \{ \rank(G_v)+k | v\in V(Y)\}, \text{$\rank(G_e)<\rank(G_v)$ and $k\geq 1$}\\
&=m+k.
\end{split}
\]\end{proof}

\begin{corollary}
 Let $Y$ be a finite graph of groups such that for each $v\in V(Y)$ the group $G_v$ is infinite finitely generated virtually abelian, and for each $e\in E(Y)$ the group $G_e$ is a finite group.  Let  $m=\max\{rank(G_v)| v\in V(Y) \}$. Then for $1\leq k<m$ we have $\gd_{\calF_k}(G)=m+k.$ 
\end{corollary}
%%%%%%%%%%%%%%%%%%%%%%%%%%%%%%%%%%%%%%%%%%%%%%%%%
\bibliographystyle{alpha} %harvard, unsrt, alpha
\bibliography{mybib}
\end{document}